\documentclass[10pt]{amsart}

\usepackage[utf8]{inputenc}

\usepackage{amssymb,amsthm,amsmath}
\usepackage{mathrsfs}
\usepackage{enumerate}
\usepackage{graphicx,xcolor}
\usepackage{setspace}
\usepackage[hidelinks]{hyperref}
\usepackage{comment}
\usepackage{bbm}

\usepackage{pgfplots}
\usepgfplotslibrary{fillbetween}
\usetikzlibrary{calc}
\pgfplotsset{compat=1.17}

\usepackage[paper=a4paper, left=1.2in, right=1.2in, top=1.2in, bottom=1.2in]{geometry}
\newcommand{\ls}{\leqslant}
\newcommand{\gr}{\geqslant}

\newcommand{\conv}{{\rm conv}}
\newcommand{\vol}{{\rm Vol}}
\newcommand{\pos}{{\rm pos}}
\newcommand{\Var}{{\rm Var}}
\newcommand{\dist}{{\rm dist}}
\newcommand{\diam}{{\rm diam}}

\usepackage{dsfont}

\numberwithin{equation}{section} 

\newtheorem{theorem}{Theorem}[section] 
\newtheorem{lemma}[theorem]{Lemma}
\newtheorem{corollary}[theorem]{Corollary}
\newtheorem{proposition}[theorem]{Proposition}

\theoremstyle{remark}
\newtheorem{remark}[theorem]{Remark}
\newtheorem{claim}{Claim}

\theoremstyle{definition}

\hypersetup{
    colorlinks,
    linkcolor={teal!50!black},
    citecolor={green!40!black},
    urlcolor={cyan!60!black}
}

\title{ \bf Variance lower bounds for geometric functionals of rotationally invariant log-concave random polytopes}

\author{Minas Pafis and Christos Pandis}

\date{}

\begin{document}

\begin{abstract}\footnotesize
We establish variance lower bounds for the intrinsic volumes and the face numbers of random polytopes, generated by independent samples from rotationally invariant log-concave probability measures on $\mathbb{R}^d$ with full support. Our results extend the corresponding Gaussian lower bounds of B\'ar\'any and Vu and of B\'ar\'any and Th\"ale to this broader class of distributions. Our proof builds on the geometric construction introduced by B\'ar\'any and Vu, for which we develop an alternative treatment based on barycentric coordinates. We combine it with local variance estimates for both intrinsic volumes and the number of faces.
\end{abstract}

\maketitle

\section{Introduction}

Random polytopes play a central role in convex and stochastic geometry. They
provide probabilistic models for the approximation of convex bodies and arise
naturally in asymptotic geometric analysis. For general background on
stochastic geometry and this kind of models, we refer to
\cite{SchneiderWeil2008,WeilWieacker1993}; more focused surveys on random
polytopes can be found in \cite{Barany2008,Hug2013,Reitzner2010}.

Let $\mu$ be a
probability measure on $\mathbb{R}^d$, and $(X_n)_{n \in \mathbb{N}}$ be a sequence of independent
random vectors distributed according to $\mu$. For every $n \geq d+1$, set
\[
    K_n^\mu:=\conv\{X_1,\ldots,X_n\}.
\]
Among the most fundamental geometric characteristics of $K_n^\mu$ are its
intrinsic volumes
\[
    V_\ell(K_n^\mu), \quad 1\leq \ell\leq d,
\]
which include, up to normalizing constants, the mean width, surface area and volume, as well as its face numbers \[f_\ell(K_n^\mu), \quad \ell=0,\ldots,d-1.\] A natural problem is to determine the asymptotic behaviour of these
quantities and, in particular, the order of their fluctuations.

Higher-dimensional variance estimates were developed earlier in the classical
uniform model. Let $K_n$ be the convex hull of $n$ independent points
distributed uniformly in a convex body $K\subseteq\mathbb{R}^d$. For the
Euclidean ball, K\"ufer \cite{Kufer1994} obtained an upper bound of the
correct order for the variance of the volume. Reitzner \cite{Reitzner2003},  using the Efron--Stein jackknife inequality, extended this upper bound to convex bodies with $C^2$ boundary and positive Gaussian curvature. He subsequently obtained the matching lower
bound and also determined the order of the variances of all face numbers
\cite{Reitzner2005}. Thus, for every such convex body,
\[
    \Var\bigl(\operatorname{Vol}_d(K_n)\bigr)
    \asymp
    n^{-\frac{d+3}{d+1}}
\]
and
\[
    \Var\bigl(f_\ell(K_n)\bigr)
    \asymp
    n^{\frac{d-1}{d+1}},
    \qquad 0\leq \ell\leq d-1.
\]
B\'ar\'any, Fodor and V\'igh \cite{Bar-Fod-Vig} later established
the corresponding order for every intrinsic volume:
\[
    \Var\bigl(V_\ell(K_n)\bigr)
    \asymp
    n^{-\frac{d+3}{d+1}},
    \qquad 1\leq\ell\leq d.
\]

The Gaussian model, corresponding to the standard Gaussian probability measure
$\gamma_d$ on $\mathbb{R}^d$, has played a particularly important role in the
development of this theory. The asymptotic behaviour of the expected intrinsic volumes of Gaussian
polytopes has been known for a long time. Affentranger
\cite{Affentranger} considered convex hulls of random points with spherically symmetric distributions and, in particular, obtained asymptotic formulas for the expected intrinsic volumes of Gaussian polytopes. Hug, Munsonius
and Reitzner \cite{Hug-Mun-Rei} subsequently derived asymptotic results for the expected values of several geometric functionals of Gaussian polytopes, including face numbers and volumes of their skeletons. Compared with the expectation, considerably
less was initially known about fluctuations. Hug and Reitzner
\cite{Hug-Rei}, obtained upper bounds for the variances of the intrinsic volumes and face numbers of Gaussian polytopes.

A major step in the study of fluctuations of Gaussian random polytopes was
made by B\'ar\'any and Vu \cite{Bar-Vu}. They established central limit
theorems for the volume and the face numbers of Gaussian polytopes. An
essential ingredient in their argument is a geometric construction of a
large family of well separated simplices placed near a suitably chosen sphere. Combined with a
local fluctuation argument, this construction yields the variance lower
bounds required for their central limit theorems. In particular, they
obtained
\[
    \Var\bigl(V_d(K_n^{\gamma_d})\bigr)
    \gtrsim
    (\log n)^{(d-3)/2}
\]
and, for every $\ell\in\{0,\ldots,d-1\}$,
\[
    \Var\bigl(f_\ell(K_n^{\gamma_d})\bigr)
    \gtrsim
    (\log n)^{(d-1)/2}.
\] Together with the upper bounds of Hug and Reitzner \cite{Hug-Rei}, these
estimates determine the order of the variances:
\[
    \Var\bigl(V_d(K_n^{\gamma_d})\bigr)
    \asymp
    (\log n)^{(d-3)/2},
\]
and
\[
    \Var\bigl(f_\ell(K_n^{\gamma_d})\bigr)
    \asymp
    (\log n)^{(d-1)/2},
    \qquad 0\leq \ell\leq d-1.
\]
The precise asymptotic order of the variance of all intrinsic volumes was
determined later by Calka and Yukich \cite{Cal-Yuk}. Using a scaling-limit approach based on parabolic germ--grain models, they proved that, for every
$\ell\in\{1,\ldots,d\}$,
\[
    \lim_{n\to\infty}
    (\log n)^{(d+3)/2-\ell}
    \,\Var\bigl(V_\ell(K_n^{\gamma_d})\bigr)
    =c_{d,\ell},
\]
where $c_{d,\ell}\in[0,\infty)$. Their argument, however, did not exclude the
possibility that $c_{d,\ell}=0$ for the lower-dimensional intrinsic volumes.
This remaining problem was addressed by B\'ar\'any and Th\"ale
\cite{Bar-Th}. Building on the geometric construction of B\'ar\'any and Vu,
they proved
\[
    \Var\bigl(V_\ell(K_n^{\gamma_d})\bigr)
    \gtrsim
    (\log n)^{\ell-(d+3)/2},
    \quad 1\leq\ell\leq d,
\]
thereby showing that the limiting constants in the variance asymptotics of
Calka and Yukich are strictly positive.

Variance estimates for intrinsic volumes have also been studied for random
polytopes generated by distributions other than the Gaussian measure. Grote \cite{GroteGamma} established precise variance asymptotics for the intrinsic volumes and face numbers of generalized Gamma polytopes (we thank Prof. Th\"ale for bringing this result to our attention). In particular, if $\gamma_{d,\alpha,\beta}$ is the probability measure with density proportional to \[\|x\|_2^\alpha e^{-\|x\|_2^\beta/\beta}\] for some $a>-1$ and $\beta \geq 1$, he showed that \[\Var\bigl(V_\ell(K_n^{\gamma_{d,\alpha,\beta}})\bigr) \asymp ( \log n)^{\frac{4\ell-\beta(d+3)}{2\beta}}, \quad \ell=1,\ldots,d\] and \[\Var\bigl( f_\ell(K_n^{\gamma_{d,\alpha,\beta}})\bigr) \asymp (\log n)^{\frac{d-1}{2}}, \quad \ell=0,\ldots,d-1.\]Another particularly useful class of rotationally invariant random polytopes is
provided by the beta and beta-prime models. Kabluchko, Temesvari and
Th\"ale \cite{Kab-Tem-Tha} considered the densities
\[
    f_{d,\beta}(x)
    =
    c_{d,\beta}(1-\|x\|_2^2)^\beta
    \mathbf{1}_{\{\|x\|_2<1\}},
    \quad \beta>-1,
\]
and
\[
    \widetilde f_{d,\beta}(x)
    =
    \widetilde c_{d,\beta}(1+\|x\|_2^2)^{-\beta},
    \quad \beta> d/2,
\]
corresponding respectively to beta and beta-prime distributions. They
obtained exact expressions for the expected intrinsic volumes and expected
facet numbers of the associated random polytopes. The beta family contains, through suitable limiting procedures, both the spherical and Gaussian models. Notice also that, when $\beta\geq0$, the beta distribution is rotationally
invariant and log-concave, but has compact support. Thus, the beta model is a
natural counterpart to the full-support log-concave setting considered in
the present paper.

Very recently, Fodor and Gr\"unfelder \cite{Fod-Gru} determined the order of
the variances of all intrinsic volumes and face numbers of beta polytopes.
If $K_n^\beta$ is the convex hull of $n$ independent points with beta
density of parameter $\beta>-1$, then, for every
$1\leq\ell\leq d$,
\[
    \Var\bigl(V_\ell(K_n^\beta)\bigr)
    \asymp
    n^{-\frac{d+3}{d+1+2\beta}},
\]
while, for every $\ell\in\{0,\ldots,d-1\}$,
\[
    \Var\bigl(f_\ell(K_n^\beta)\bigr)
    \asymp
    n^{\frac{d-1}{d+1+2\beta}}.
\]
They also established central limit theorems for the intrinsic volumes.

\textbf{Our contribution.} The purpose of the present paper is to extend the variance lower bounds
known for Gaussian polytopes to a substantially broader class of
probability measures. More precisely, we consider rotationally invariant
log-concave probability measures $\mu$ on $\mathbb{R}^d$ satisfying
\[
    {\rm supp}(\mu)=\mathbb{R}^d,
\]
and study the random polytope $K_n^\mu$ generated by $n$ independent points
distributed according to $\mu$. Writing its density as $f_{\mu}$,
we define the density superlevel sets
$$
R_t(\mu)
:=
\bigl\{x\in\mathbb{R}^d:f_\mu(x)\geq e^{-t}f_\mu(0)\bigr\}, \quad t>0.
$$
Rotational invariance implies that
$$
R_t(\mu)=\rho_tB_2^d
$$
for a suitable radius $\rho_t>0$. At the critical level $t_n$ introduced below (see Lemma~\ref{lem:choice-tn}), we set
$$
r_n:=\rho_{t_n},
\quad
h_n:=\rho_{t_n+1}-\rho_{t_n}, \quad a_n:=\sqrt{r_nh_n}
$$
The level $t_n$ is chosen so that a region of radial height $h_n$ and tangential radius $a_n$ near $\partial R_{t_n}(\mu)$ has $\mu$-measure of order $1/n$; equivalently,
$$
 f_\mu(r_n)a_n^{d-1}h_n\asymp \frac{1}{n}.
$$
With the above notation, our principal result is the following.

\begin{theorem}\label{thm:main-general-radial}
Let $\mu$ be a rotationally invariant log-concave probability measure, with ${\rm supp}(\mu)=\mathbb{R}^d$. Then, for every $1 \leq \ell \leq d$ and every $n \geq d+1$
\[
\Var\bigl(V_\ell(K_n^\mu)\bigr)
\geq
C(\mu, d,\ell) \,r_n^{2\ell-(d+3)/2}\,h_n^{(d+3)/2},
\] where $C(\mu,d,\ell)>0$.
\end{theorem}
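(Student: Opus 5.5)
We follow the B\'ar\'any--Vu scheme: build many disjoint, well-separated local configurations near the sphere $r_nB_2^d$, show that each of them independently contributes a fluctuation of the right size, and add the contributions up by conditioning.

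\textbf{Step 1 (construction).} Choose a maximal $c\,a_n$-separated set of points $u_1,\dots,u_m$ on $S^{d-1}$, scaled to radius $r_n$. Then $m\asymp (r_n/a_n)^{d-1}=(r_n/h_n)^{(d-1)/2}$. Around each $u_j$ we place a cap-like region $C_j$ of radial height $\asymp h_n$ and tangential radius $\asymp a_n$, lying in the shell $\rho_{t_n-O(1)}\le\|x\|\le\rho_{t_n+O(1)}$. Inside $C_j$ we fix $d+1$ small sets $A_j^0,\dots,A_j^{d-1},A_j^d$. The sets $A_j^i$, $i<d$, sit near the vertices of a regular $(d-1)$-simplex in the tangential direction, at depth about $h_n$. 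The set $A_j^d$ sits near the apex, slightly outside the hyperplane through the others, at distance of order $h_n$.

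For such regions the geometry is governed by log-concavity and rotational invariance. Writing $f_\mu(x)=e^{-\psi(\|x\|)}$ with $\psi$ convex, we have $\psi(\rho_{t_n+s})-\psi(\rho_{t_n})=s$ by definition of $\rho_t$. Convexity then gives $\rho_{t_n+s}-\rho_{t_n}\asymp_s h_n$ for bounded $s$, and the density varies by a bounded factor over $C_j$. Together with the choice of $t_n$ from Lemma~\ref{lem:choice-tn}, $f_\mu(r_n)a_n^{d-1}h_n\asymp 1/n$, so each $A_j^i$ has $\mu$-measure $\asymp 1/n$.

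I would handle the membership conditions in barycentric coordinates relative to the simplex spanned by the $A_j^i$. This lets us check uniformly in $n$, using only that $a_n^2=r_nh_n$ (the sphere deviates from its tangent plane by $\asymp h_n$ over distance $a_n$), the following: whenever each $A_j^i$ contains exactly one sample point, the remaining points of the sample lie outside $C_j$, and some "shield" regions are occupied, then the facet structure of $K_n^\mu$ inside $C_j$ is determined by these points alone.

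\textbf{Step 2 (good event and independence).} Let $G_j$ be the event that each $A_j^i$ contains exactly one point, $C_j\setminus\bigcup_i A_j^i$ is empty, and $K_n^\mu$ contains the region $\rho_{t_n-M}B_2^d$. The last condition holds with probability $1-o(1)$, since $\mu(\rho_{t_n-M}B_2^d)$-complement still carries mass $\gg 1/n$ per cap. Since each relevant region has measure $\asymp 1/n$, a Poisson-type computation gives $\mathbb P(G_j)\ge c>0$. Hence, with probability bounded below, $\gtrsim m$ indices $j$ are good. Condition on $\mathcal F$: all points outside $\bigcup_j A_j^d$, together with which $A_j^d$ are occupied. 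On good $j$, the positions $x_j$ of the apex points are conditionally independent, and $V_\ell(K_n^\mu)$ decomposes as a constant plus $\sum_j g_j(x_j)$. Here $g_j(x_j)$ is the change in $V_\ell$ caused by replacing the flat facet over $A_j^0,\dots,A_j^{d-1}$ with the cone to $x_j$. By the law of total variance,
\[
\Var V_\ell\ge \mathbb E\,\Var(V_\ell\mid\mathcal F)\ge \mathbb E\sum_{j\ \rm good}\Var(g_j(x_j)\mid\mathcal F).
\]

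\textbf{Step 3 (local variance).} This is the main obstacle. We must show $\Var(g_j(x_j))\gtrsim (a_n^{\ell-1}h_n)^2$ for every $\ell$, including small $\ell$. For $\ell=d$ this is easy: moving $x_j$ within $A_j^d$ changes the volume by $\asymp a_n^{d-1}h_n$. For general $\ell$ we use the Kubota/Crofton representation $V_\ell(K)=c\int_{G(d,\ell)}\vol_\ell(K|E)\,dE$. A positive proportion of subspaces $E$ contains the radial direction $u_j$ and is otherwise generic, and for these the projection gains an $\ell$-dimensional cap of size $\asymp a_n^{\ell-1}h_n$ that depends monotonically on the height of $x_j$.

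Monotonicity matters because it prevents cancellation between different $E$. Concretely, $g_j$ is nondecreasing in the radial coordinate of $x_j$, and moving $x_j$ by $\asymp h_n$ radially changes $g_j$ by $\gtrsim a_n^{\ell-1}h_n$. We then split $A_j^d$ into a lower half and an upper half, each of conditional probability $\gtrsim 1$, whose $g_j$-values are separated by $\gtrsim a_n^{\ell-1}h_n$; this gives the variance bound. The barycentric description from Step 1 is what keeps the relevant projected cap shapes uniformly nondegenerate.

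\textbf{Conclusion.} Summing the local bounds,
\[
\Var V_\ell\gtrsim m\,a_n^{2\ell-2}h_n^2=(r_n/h_n)^{\frac{d-1}{2}}(r_nh_n)^{\ell-1}h_n^2=r_n^{2\ell-\frac{d+3}{2}}h_n^{\frac{d+3}{2}},
\]
with constants depending only on $\mu,d,\ell$. Finitely many small $n$ are absorbed into $C$, since for each fixed $n$ the variance is positive by full support.
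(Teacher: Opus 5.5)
Your Steps 1--2 follow the same architecture as the paper: B\'ar\'any--Vu simplices over a maximal $\asymp a_n$-separated set on $r_nS^{d-1}$, a local event of probability $\gtrsim 1$, conditioning on everything except the apex points, and the law of total variance. However, Step 3 is false for every $\ell<d$, and your closing computation hides this.

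First, for $\ell<d$ the $\ell$-subspaces containing a fixed direction form a $\nu_{d,\ell}$-null set, so no positive proportion of them contains $u_j$. Those within angle $t$ of it have measure $\asymp t^{d-\ell}$, and only such nearly vertical subspaces see the apex at all. Indeed, let $E$ make angle $\theta$ with the unit normal $n_F$ of the base facet $F$. Then $P_EF$ has width $\asymp a_n\sin\theta$ in the direction of $P_En_F$, whereas the apex sits at height $\asymp h_n$ above ${\rm aff}(F)$. So for $\theta\gtrsim h_n/a_n$ the projected apex lies inside $P_EF$, and $\vol_\ell(P_E\conv\{x_j,F\})=\vol_\ell(P_EF)$ does not depend on $x_j$.

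Hence the oscillation of $g_j$ is $\lesssim a_n^{\ell-1}h_n\,(h_n/a_n)^{d-\ell}$. Your bound $\Var(g_j)\gtrsim(a_n^{\ell-1}h_n)^2$ is too large by the factor $(a_n/h_n)^{2(d-\ell)}=(r_n/h_n)^{d-\ell}\gtrsim(\log n)^{d-\ell}$. Your final identity is wrong by the same factor:
$m\,a_n^{2\ell-2}h_n^2\asymp r_n^{\ell+\frac{d-3}{2}}h_n^{\ell+\frac{3-d}{2}}=(r_n/h_n)^{d-\ell}\,r_n^{2\ell-\frac{d+3}{2}}h_n^{\frac{d+3}{2}}$.
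The two errors offset exactly. Taken at face value, your local claim would give $\Var V_\ell(K_n^{(p)})\gtrsim(\log n)^{2\ell/p-(d+3)/2+d-\ell}$, which contradicts Grote's sharp order $(\log n)^{2\ell/p-(d+3)/2}$.

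The missing ingredient is the paper's restriction of Kubota's integral to a thin set of subspaces. It conditions on the tangential coordinate $p$ of $Z\in\Delta_i^0$ and compares two heights with $t_1-t_2\ge b_2h_n/8$ on the same radial fiber. Then $\conv\{z_2,F_i\}\subseteq\conv\{z_1,F_i\}$, so the integrand is nonnegative on all of $G_{d,\ell}$. It may therefore be restricted to $\mathcal G=\{L:\angle(n_{F_i},L)\le\varepsilon h_n/a_n\}$, which has $\nu_{d,\ell}(\mathcal G)\gtrsim(h_n/a_n)^{d-\ell}$.

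The window must be centred at the facet normal $n_{F_i}$, not at $u_i$. The reason is that $F_i$ can be tilted by an angle of order $b_2h_n/a_n$, the same order as the window. For $L_0\ni n_{F_i}$ the projections are pyramids over $P_{L_0}F_i$ whose volumes differ by $\gtrsim a_n^{\ell-1}h_n$. The Lipschitz bound $|\vol_\ell(P_LK)-\vol_\ell(P_{L_0}K)|\lesssim(\diam K)^\ell d_{G_{d,\ell}}(L,L_0)\lesssim a_n^\ell\theta$ transfers this to every $L\in\mathcal G$ once $\varepsilon$ is small. This gives $\delta_n=a_n^{2\ell-d-1}h_n^{d-\ell+1}$ and $m\delta_n^2\asymp r_n^{2\ell-\frac{d+3}{2}}h_n^{\frac{d+3}{2}}$. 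Your split of $A_j^d$ into ``upper'' and ``lower'' halves also needs this fiberwise conditioning, since points at different tangential positions are not nested.

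There are two smaller problems in Step 2.
\begin{enumerate}
\item[(a)] The ``shield'' event $K_n^\mu\supseteq\rho_{t_n-M}B_2^d$ with fixed $M$ is not a $1-o(1)$ event. Already for the Gaussian its probability tends to $0$: unboundedly many essentially disjoint caps at that depth must all be hit, and each is missed with probability bounded below.
\item[(b)] A region $C_j$ confined to the shell does not exclude far-out sample points in direction $u_j$, which would destroy the local structure.
\end{enumerate}
The paper needs neither device. It requires instead that no other sample point lie in the union $\widetilde H_i$ of $d+1$ half-spaces at distance $\ge r_n-Lh_n$ from the origin, each of measure $O(1/n)$. A shadow lemma then gives $K_n^\mu=Q_i\cup\conv\{Z_i^0,F_i\}$ with $Q_i\cap\conv\{Z_i^0,F_i\}=F_i$, so additivity follows from the valuation property.
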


We also prove variance lower bounds for the face numbers of $K_n^\mu$.

\begin{theorem}\label{thm:number-of faces}
Let $\mu$ be a rotationally invariant log-concave probability measure, with ${\rm supp}(\mu)=\mathbb{R}^d$. Then, for every $0 \leq \ell \leq d-1$ and every $n \geq d+2$
\[
\Var\bigl(f_\ell(K_n^\mu)\bigr)
\geq
C(\mu, d,\ell) \,r_n^{(d-1)/2}\,h_n^{-(d-1)/2},
\] where $C(\mu,d,\ell)>0$.
\end{theorem}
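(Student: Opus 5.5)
We follow the Bárány–Vu strategy, adapted to the scales $r_n$, $h_n$, $a_n$. The approach is to build many disjoint, well-separated "caps" near the sphere $r_nS^{d-1}$ on which the number of faces fluctuates independently, and then use the law of total variance conditionally on the configuration outside these caps.

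\textbf{Step 1: cap construction.} Choose a maximal family of points $u_1,\dots,u_m\in S^{d-1}$ with mutual angular distance at least $c\,a_n/r_n$. Then
\[
m\asymp (r_n/a_n)^{d-1}=(r_n/h_n)^{(d-1)/2}.
\]
Around each $r_nu_j$, inside the shell between $\rho_{t_n-1}$ and $\rho_{t_n+1}$, we place $d$ small balls $B_{j,1},\dots,B_{j,d}$. Their centres form a regular $(d-1)$-simplex of tangential size $\asymp a_n$ at radius $r_n$, and their radii are a small fraction of $a_n$ tangentially and of $h_n$ radially. We also place two extra balls: $B_{j,0}$, just above the facet hyperplane of these centres (at radial height $\asymp h_n$ beyond it), and a region $D_j$ just below it. The barycentric-coordinate treatment enters here. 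Using log-concavity and rotational invariance (which give $f_\mu(\rho_{t_n\pm1})\asymp e^{\mp1}f_\mu(r_n)$ and control $h_n$ versus $\rho_{t_n+2}-\rho_{t_n+1}$), together with Lemma~\ref{lem:choice-tn}, we verify that each of these small regions has $\mu$-measure $\asymp 1/n$.

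\textbf{Step 2: the good event and local independence.} Let $A_j$ be the event that each $B_{j,i}$ (for $i=1,\dots,d$) contains exactly one sample point, that $B_{j,0}\cup D_j$ contains at most one point, and that the "visibility cone" of the cap contains no other points. We also need a global event $E$, namely $R_{t_n-1}\subseteq K_n^\mu$ up to the shell, with $\mathbb P(E)\ge c$. The key geometric claim is as follows. On $A_j\cap E$, the faces of $K_n^\mu$ that meet the cap $j$ depend only on the points in cap $j$. Moreover, whether the extra point lies in $B_{j,0}$ or in $D_j$ changes $f_\ell$ by a nonzero amount bounded independently of $n$. In the first case the simplex spanned by the points of $B_{j,1},\dots,B_{j,d}$ is subdivided by a cone over it, creating new $\ell$-faces; in the second case it is a facet. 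Barycentric coordinates make the claim robust under the perturbations allowed by the ball radii: the extra point sees exactly the intended facet, and the neighbouring caps are untouched because they are separated by $\gtrsim a_n$ while the caps have height $\lesssim h_n=a_n^2/r_n$. By Poisson-type estimates, $\mathbb P(A_j)\ge c>0$ uniformly in $j$ and $n$. Since $n\cdot\mu(\text{cap})=O(1)$, a second-moment argument gives that the number of $j$ with $A_j$ is at least $cm$ with probability $\ge c$.

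\textbf{Step 3: variance bound.} Let $\mathcal F$ be the $\sigma$-algebra generated by all information except which of $B_{j,0},D_j$ contains the extra point, for the good $j$. Conditionally on $\mathcal F$, these binary choices are independent, each with probabilities bounded away from $0$ and $1$, and $f_\ell$ is a sum of their contributions plus an $\mathcal F$-measurable term. Hence
\[
\Var(f_\ell)\ge \mathbb E\,\Var(f_\ell\mid\mathcal F)\ge c\,\mathbb E\#\{j:A_j\}\ge c\,m,
\]
which is $\gtrsim r_n^{(d-1)/2}h_n^{-(d-1)/2}$. Small $n\ge d+2$ is handled by adjusting the constant, since the variance is positive for each fixed $n$ by full support.

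The main obstacle is Step 2: showing uniformly in $n$ that the geometry is "locally combinatorially rigid". This requires that the radial curvature scale $h_n$ and tangential scale $a_n$ produce caps of bounded combinatorial shape for a general log-concave radial profile. Here the log-concavity inequalities comparing $\rho_{t+1}-\rho_t$ at neighbouring levels are what I would use first.
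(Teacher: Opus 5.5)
\textbf{The gap is in Step~2, in the ``below'' alternative.} When the extra point $x$ lies in $B_{j,0}$, a shadow/connected-sum argument gives $K_n^\mu=Q\cup\conv\{x,F\}$, where $Q$ is the hull of all other points, so $f_\ell$ increases by the fixed amount $\tau_{d,\ell}$. When $x$ lies in $D_j$, on the inner side of ${\rm aff}(F)$, you get $f_\ell(K_n^\mu)=f_\ell(Q)$ only if $x$ is hidden in $Q$, and nothing in $A_j$ guarantees this.

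If $x$ is not hidden, it is a vertex, and the faces it creates depend on the non-local shape of $Q$ below $F$. For example, if $x$ lies beyond a single facet of $Q$ adjacent to $F$, this is a stacking and changes $f_\ell$ by the same $\tau_{d,\ell}$, so there is no fluctuation at all. Such a vertex can also interact with neighbouring caps, which breaks the additivity you use in Step~3.

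Your remedy is an event $E=\{R_{t_n-1}\subseteq K_n^\mu\}$ with $\mathbb P(E)\ge c$, but this fails in general. For the Gaussian, $n\,q_\mu(\rho_{t_n-1})\approx e\,n\,q_\mu(r_n)$ is bounded. The event $E$ forces each of $\asymp m\to\infty$ essentially disjoint caps at level $t_n-1$ to be non-empty, so $\mathbb P(E)\to 0$. A repair needs one of two things:
\begin{itemize}
\item a covering lemma at a deeper level $t_n-k_n$ for general radial log-concave $\mu$, with $E$ made measurable for your conditioning $\sigma$-algebra; or
\item additional local points surrounding each cap that hide $D_j$.
\end{itemize}

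\textbf{How the paper avoids this.} It never uses the region below $F_i$. The event $A_i$ asks for one point in each $\Delta_i^j$, $1\le j\le d$, \emph{two} points in $\Delta_i^0$, and none in $\widetilde H_i$. Lemma~\ref{lem:shadow-lemma}, Lemma~\ref{lem:faces-construction} and Corollary~\ref{cor:relations-of-faces} then give $f_\ell(K_n^\mu)=C(\mathcal F)+\sum_{i\in I}f_\ell(\conv\{Z_1^{(i)},Z_2^{(i)},F_i\})$. The fluctuation compares $Z_1\in U_{\rm in}$ (inside the simplex $\conv\{z_i^0,F_i\}$) with $Z_1\in U_{\rm st}$ (beyond only the facet $\conv\{z_i^0,z_i^2,\dots,z_i^d\}$). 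Both sets lie in $\Delta_i^0$ and have $\mu_{\Delta_i^0}$-mass $\gtrsim 1$, so everything is local and no global event is needed.

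\textbf{Two smaller points.}
\begin{itemize}
\item $\mathbb P(A_j)\ge c$ requires your ``visibility cone'' to have measure $O(1/n)$. In the paper this region is the union $\widetilde H_i$ of $d+1$ half-spaces at distance $\ge r_n-Lh_n$ from the origin. It is bounded via $\rho_{t_n-L}\le r_n-Lh_n$ (Lemma~\ref{lem:inward-q}), which compares with the \emph{inner} increments of $\rho_t$, not with $\rho_{t_n+2}-\rho_{t_n+1}$ as you suggest.
\item $B_{j,0}\cup D_j$ should contain exactly one point, not at most one.
\end{itemize}
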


Our results recover the corresponding variance lower bounds for random polytopes generated by the family of rotationally invariant log-concave probability measures with densities proportional to \(e^{-\|x\|_2^p/p}\), \(p\geq 1\). In particular, they recover the lower bounds from Grote's \cite{GroteGamma} sharp estimates for $\gamma_{d,0,p}$, and as a consequence, the ones obtained in \cite{Bar-Th,Bar-Vu}.  
\begin{corollary}\label{cor:classical-distributions}
Let $p\geq1$ and $\mu_p$ be the rotationally invariant probability measure on $\mathbb{R}^d$ with density
$$
f_p(x)=c_{d,p}\exp\left(-\frac{\|x\|_2^p}{p}\right), \quad x\in \mathbb{R}^d.
$$
If $K_n^{(p)}$ is the convex hull of $n$ independent random vectors with distribution $\mu_p$, then for every $ n \geq d+1$
$$
\Var\bigl(V_\ell(K_n^{(p)})\bigr)
\geq C_1(p,d,\ell)\,
(\log n)^{\frac{2\ell}{p}-\frac{d+3}{2}},
\quad 1\leq\ell\leq d
$$
and for every $n \geq d+2$
$$
\Var\bigl(f_\ell(K_n^{(p)})\bigr)
\geq C_2(p,d,\ell)\,
(\log n)^{\frac{d-1}{2}},
\qquad 0\leq \ell\leq d-1,
$$
where $C_1(p,d,\ell),\,C_2(p,d,\ell)>0$.
\end{corollary}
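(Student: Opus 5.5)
The corollary follows by specializing Theorem~\ref{thm:main-general-radial} and Theorem~\ref{thm:number-of faces} to $\mu=\mu_p$. The only work is to compute the asymptotic order of $r_n$ and $h_n$ for this density.

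Since $f_p(x)=c_{d,p}e^{-\|x\|_2^p/p}$ and $f_p(0)=c_{d,p}$, we have $R_t(\mu_p)=\{\|x\|_2^p\le pt\}$. Hence
\[
\rho_t=(pt)^{1/p},\qquad h=\rho_{t+1}-\rho_t=p^{1/p}\bigl((t+1)^{1/p}-t^{1/p}\bigr)\asymp t^{1/p-1}
\]
uniformly for $t\ge1$, by the mean value theorem; the constants depend only on $p$. Consequently $r_n\asymp t_n^{1/p}$, $h_n\asymp t_n^{1/p-1}$, and $a_n^2=r_nh_n\asymp t_n^{2/p-1}$.

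The main step is to show $t_n\asymp\log n$ for all large $n$. We use the defining relation of Lemma~\ref{lem:choice-tn},
\[
f_\mu(r_n)\,a_n^{d-1}h_n\asymp \frac{1}{n}.
\]
Here $f_\mu(r_n)=c_{d,p}e^{-t_n}$, so the relation reads
\[
e^{-t_n}\,t_n^{\frac{(2/p-1)(d-1)}{2}+\frac1p-1}\asymp \frac1n.
\]
Taking logarithms gives $t_n=\log n+O(\log t_n)$, and therefore $t_n\sim\log n$. This uses $t_n\to\infty$, which holds because the left-hand side must tend to $0$. If Lemma~\ref{lem:choice-tn} only guarantees the relation up to constants depending on $\mu$, this still gives $t_n\asymp\log n$ for $n\ge n_0(p,d)$. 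For the finitely many $n$ with $d+1\le n<n_0$ (respectively $d+2\le n<n_0$), both sides of the claimed bounds are positive quantities depending only on $n,p,d,\ell$. Strict positivity of the variance for fixed $n$ already follows from the theorems, since the right-hand sides there are positive. So we can shrink the constant to cover these cases.

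Substituting into Theorem~\ref{thm:main-general-radial}:
\[
r_n^{2\ell-(d+3)/2}h_n^{(d+3)/2}\asymp t_n^{\frac1p(2\ell-\frac{d+3}{2})+(\frac1p-1)\frac{d+3}{2}}=t_n^{\frac{2\ell}{p}-\frac{d+3}{2}}\asymp(\log n)^{\frac{2\ell}{p}-\frac{d+3}{2}}.
\]
For the face numbers, Theorem~\ref{thm:number-of faces} gives
\[
(r_n/h_n)^{(d-1)/2}\asymp t_n^{(d-1)/2}\asymp(\log n)^{(d-1)/2}.
\]
The constants $C(\mu_p,d,\ell)$ become $C_1(p,d,\ell)$ and $C_2(p,d,\ell)$.

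The only potential obstacle is extracting $t_n\asymp\log n$ from however Lemma~\ref{lem:choice-tn} actually defines $t_n$. If it is defined through an exact equation with $\mu$-measure of a cap-like region, I would first bound that measure above and below by $e^{-t}t^{\kappa}$ using the explicit radial density. Then I would invert that bound monotonically to get $t_n\asymp\log n$.
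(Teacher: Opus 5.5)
Your proposal is correct and follows the paper's route. In the Remark after Lemma~\ref{lem:choice-tn}, the paper computes $\rho_t=(pt)^{1/p}$, uses the mean value theorem to get $h_n\sim r_n^{1-p}$, deduces $r_n\asymp_p(\log n)^{1/p}$ from the exact relation $f_\mu(r_n)a_n^{d-1}h_n=c_{\mu,d}/n$ of Lemma~\ref{lem:choice-tn}, and substitutes into Theorems~\ref{thm:main-general-radial} and~\ref{thm:number-of faces}. Your parametrization by $t_n\asymp\log n$ is equivalent, and your exponent computations and your handling of the finitely many small $n$ are sound.
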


Our proof follows the general strategy introduced by B\'ar\'any and Vu
\cite{Bar-Vu} and subsequently used by B\'ar\'any and Th\"ale
\cite{Bar-Th}. We first revisit the geometric construction of B\'ar\'any
and Vu. Using barycentric coordinates, we give alternative proofs of several
geometric properties of the simplices involved in the construction and
establish separation properties that make the conditional
independence of the local configurations explicit. We then adapt the construction to the rotationally
invariant log-concave setting by expressing the relevant estimates in terms
of the natural scales $r_n, \,h_n$ and $a_n$.

For the intrinsic volumes, we follow the local-variance strategy of
B\'ar\'any and Th\"ale, but use a different geometric realization of the
local fluctuation. After conditioning on the tangential position of the
distinguished point, we compare two possible positions lying on the same
radial fiber and in two separated radial subregions. The corresponding local
simplices are then nested, which allows us to obtain a uniform lower bound
for the difference of their projected $\ell$-dimensional volumes. By
Kubota's formula, this yields the required lower bound for the local
variation of the intrinsic volume. A geometric localization argument
transfers this local change to the whole random polytope, and summing the
resulting conditional variances over the well-separated regions gives the
desired global lower bound.

In adapting the argument of B\'ar\'any and Th\"ale, we encountered some
difficulties with certain geometric assertions used in \cite{Bar-Th},
which do not appear to hold in the form stated there and may be due to
typographical or technical inaccuracies. We therefore replace this part of
their proof by the geometric comparison described above.

For the face numbers, the local nature of the construction allows for a more
direct argument. After conditioning on the points outside a given active
region, the dependence on the remaining local points reduces to the face
numbers of a polytope of the form \[
\conv\{Z_1,Z_2,F\},
\] where $F$ is a fixed $(d-1)$-dimensional simplex. We identify
two local configurations of positive probability: in one, the point $Z_2$ is
hidden inside the simplex $\conv\{Z_1,F\}$, while in the other it produces a single
stacking over one of its facets. The corresponding face numbers differ by a
fixed positive amount depending only on $d$ and the dimension of the faces.
A localization argument shows that this local change agrees with the change
of the face number of the whole polytope. Summing the resulting conditional
variance contributions over the separated active regions yields the desired
lower bound.

\section{Notation and background information}
First, we introduce some basic notation and definitions. We work in $\mathbb{R}^d$, which is equipped with the standard inner product $\langle \cdot, \cdot \rangle$. We denote the corresponding Euclidean norm by $\|\cdot\|_2$, and write $B_2^d$ for the Euclidean unit ball and $S^{d-1}$ for the unit sphere. For $p \geq 1$, we denote by $\|\cdot\|_p$ the $\ell_p$-norm. Volume in $\mathbb{R}^d$ is denoted by $\vol_d$, while $\omega_d$ stands for the volume of $B_2^d$.

A  Borel probability measure $\mu$ on $\mathbb{R}^d$ is rotationally invariant if $\mu(UB)=\mu(B)$, for every Borel $B \subseteq \mathbb{R}^d$ and every orthogonal transformation $U \in O(d)$. If $\mu$ has a density, say $f_\mu$, then $f_\mu$ is radial, i.e. $f_\mu(x)=g(\|x\|_2)$, for some function $g: [0,+\infty) \to [0,+\infty)$.

We say that a Borel probability measure $\mu$ on $\mathbb{R}^d$ is  full-dimensional if $\mu(H)<1$ for every hyperplane $H$ in $\mathbb{R}^d$. In addition,  $\mu$ is called log-concave if it is full-dimensional and
$$\mu(\lambda A+(1-\lambda)B) \gr \mu(A)^{\lambda}\mu(B)^{1-\lambda}$$
for any pair of compact sets $A,B$ in ${\mathbb R}^d$ and any $\lambda \in (0,1)$. Borell \cite{Borell-1974} has proved that, under these assumptions, $\mu $ has a log-concave density $f_{{\mu }}$. Recall that a function $f:\mathbb R^d \rightarrow [0,+\infty)$ is called log-concave if its support $\{f>0\}$ is a convex set in ${\mathbb R}^d$ and the restriction of $\log{f}$ to it is concave. If $f$ is an integrable log-concave function on $\mathbb{R}^d$, there exist constants $A,B>0$ such that $f_\mu(x)\ls Ae^{-B\|x\|_2}$ for all $x\in {\mathbb R}^d$ (see \cite[Lemma~2.2.1]{BGVV-book}).

The Grassmannian of $\ell$-dimensional subspaces of $\mathbb{R}^d$ is denoted by $G_{d,\ell}$. For some $L \in G_{d,\ell}$ we write $P_L$ for the orthogonal projection from $\mathbb{R}^d$ onto $L$, and set \[B_L=B_2^d \cap L, \quad S_L=S^{d-1}\cap L. \] The Grassmannian is equipped with the metric \[d_{G_{d,\ell}}(L,M)=\|P_L-P_M\|, \quad L,M \in G_{d,\ell}\] and  with the Haar probability measure $\nu_{d,\ell}$. 

Throughout the paper we write $a \lesssim b$ for positive quantities $a,b$, to declare that $a \leq c(d,\ell) b$, for some positive constant $c(d,\ell)$ depending only on $d$ and $\ell$. We also use $a \asymp b$ to declare that $c_1(d,\ell)a \leq b \leq c_2(d,\ell)a$, where $c_1(d,\ell),c_2(d,\ell)>0$. When $\ell$ is not relevant in the context, the constants are understood to depend only on $d$. We also write $a \lesssim_{\mathcal{T}} b$ and $a \asymp_{\mathcal{T}} b$ when the implicit constants depend additionally on $\mathcal{T}$.

For $u,v \in \mathbb{R}^d \setminus \{0\}$, we define their angle to be \[\angle(u,v)=\arccos\left( \frac{\langle u,v \rangle}{\|u\|_2  \|v\|_2}\right)\] and by convention we set $\angle(u,0)=0$. If $L \in G_{d,\ell}$ , we define \[\angle(u, L)=\inf_{v \in L\setminus\{0\}} \angle(u,v).\] As for every $v \in L$, we have that $-v \in L$, it is clear  that $\, \angle(u,L) \in [0,\pi/2]$. Observe that if $u \in L$, then $\angle(u,L)=0$, whereas if $u \in L^\perp \setminus\{0\}$, then $\angle(u,L)=\pi/2$. It is also easy to check that \[\angle(u,L)=\min_{v \in S_L} \, \arccos \left( \frac{|\langle u,v \rangle|}{\|u\|_2} \right).\]

If $A \subseteq \mathbb{R}^d$ we write $\mathds{1}_A$ for its indicator function. We denote by ${\rm aff}(A)$ and $\conv(A)$  its affine and convex hull respectively. Moreover, we set \[\pos(A)=\left\{\sum_{j=1}^{m}t_jx_j: m \in \mathbb{N}, \, x_j \in A, \, t_j \geq 0 \right\}\] to be its positive hull, which is a convex set.

A convex body in $\mathbb{R}^d$ is a compact convex subset $K$ with non-empty interior. Its support function is $h_K(y)=\max \{\langle x,y\rangle: x \in K\}$ for $y \in \mathbb{R}^d$. If $0 \in {\rm int} (K)$, then we define the Minkowski functional of $K$, as $p_K(y)=\inf \{ t>0: y \in tK\}$ for every $y \in \mathbb{R}^d$.

The Hausdorff metric $d_H$ on the space of convex bodies in $\mathbb{R}^d$ is defined as \[d_H(K_1,K_2) =\inf\{\delta \geq 0: K_1 \subseteq K_2+\delta B_2^d, \quad K_2 \subseteq K_1+\delta B_2^d\}\] for every convex bodies $K_1,K_2$ in $\mathbb{R}^d$.

For a convex body $K \subseteq \mathbb{R}^d$, Steiner's formula states that for every $t \geq 0$ \[\vol_d(K+tB_2^d)=\sum_{\ell=0}^{d}\omega_{d-\ell} V_\ell(K)t^{d-\ell}.\] The coefficient $V_\ell(K)$ is called the $\ell$-intrinsic volume of $K$. In particular, $V_d(K)=\vol_d(K)$ and $V_0(K)=1$. The functionals $V_\ell(K)$ are continuous with respect to the Hausdorff metric. For $1 \leq \ell \leq d$, Kubota's formula gives 
\begin{equation}\label{eq:intrinsic-kubota-formula}  
V_\ell(K)=\binom{d}{\ell} \frac{\omega_d}{\omega_\ell \omega_{d-\ell}}\int_{G_{d,\ell}} \vol_\ell(P_LK) \, d\nu_{d,\ell}(L).
\end{equation}
Therefore, if $K_1 \subseteq K_2$ are convex bodies in $\mathbb{R}^d$, then $V_\ell(K_1) \leq V_\ell(K_2)$. It is also known that for every $1 \leq \ell \leq d$ the functional $V_\ell$ on convex bodies of $\mathbb{R}^d$ is a valuation, i.e. if $K_1, K_2 \subseteq \mathbb{R}^d$ are convex bodies, with $K_1 \cup K_2$ to be also a convex body, then \begin{equation}\label{eq:intrinsic-valuation}
    V_\ell(K_1 \cup K_2) =V_\ell(K_1)+V_\ell(K_2)-V_\ell (K_1 \cap K_2).
\end{equation}

A polytope $P \subseteq \mathbb{R}^d$ is the convex hull of finitely many points. We denote its affine dimension by ${\rm dim}(P)$. A set $F \subseteq P $ is a face of $P$ if there exists a supporting hyperplane $H$ of $P$ such that \[F=P \cap H.\] A face of dimension $\ell$ is called an $\ell$-face, $\ell=0,\ldots,d-1$. The number of $\ell$-faces of $P$ is denoted by $f_\ell(P)$. Thus, \[(f_0(P),\ldots,f_{d-1}(P))\] is the $f$-vector of $P$. If for some $m=1,\ldots,d$, \[P=P_m=\conv\{v_i^0,\ldots,v_i^m\}\] is a $m$-simplex then every face of $P_m$ is the convex hull of a subset of its vertices. Consequently, \begin{equation}\label{eq:m-simplex-faces}
    f_\ell(P_m)=\binom{m+1}{\ell+1}, \quad \ell=0,\ldots,m-1.
\end{equation}
In addition, if $P$ is a $d$-simplex, every $x \in \mathbb{R}^d$ can be written uniquely in barycentric coordinates as \[x=\sum_{j=0}^{d}\lambda_j(x)v_j, \quad \sum_{j=0}^d\lambda_j=1.\] Each $\lambda_j$ is an affine function and the vector \[\lambda(x)=(\lambda_0(x),\ldots,\lambda_d(x))\] is called the barycentric vector of $x$ with respect to $P$ or its vertices. It is easy to see that $x \in P$ is equivalent to \[\lambda(x)\in \Lambda_d=\{(\lambda_0,\ldots,\lambda_d):\lambda_0,\ldots,\lambda_d \geq 0, \, \sum_{j=0}^{d} \lambda_j =1\}  \]

We refer to Schneider's book \cite{Schneider-book} for classical facts from the Brunn-Minkowski theory and to Ziegler's book \cite{Ziegler-book} for polytope theory.

\section{Definition of parameters and auxiliary estimates}

Let \(d\ge 2\), and let \(\mu\) be a rotationally invariant log-concave probability
measure on \(\mathbb R^d\) with unbounded support and density
\[
d\mu(x)=f_\mu(x)\,dx= f_\mu(0)e^{-\phi(\|x\|_2)}\,dx,
\]
where \(\phi:[0,+\infty)\to\mathbb R\)  satisfies
\[
\phi(r)\to+\infty ,\] as $ r\to+\infty$.

Since \(f\) is log-concave, the function
\[
\Phi(x):=\phi(\|x\|_2)
\]
is convex on \(\mathbb R^d\). In particular, by restricting \(\Phi\) to the ray
\(\{re_1:r\ge0\}\), the radial profile \(\phi\) is convex on \([0,+\infty)\). 

Moreover, \(\phi\) is automatically increasing. Indeed, if \(0\le r\le s\), then
\[
re_1=\lambda se_1+(1-\lambda)(-se_1),
\quad
\lambda=\frac{1+r/s}{2}\in[0,1].
\]
By convexity of \(\Phi\) and radiality,
\[
\phi(r)=\Phi(re_1)
\le
\lambda\Phi(se_1)+(1-\lambda)\Phi(-se_1)
=
\phi(s).
\]
Thus \(\phi\) is increasing on \([0,+\infty)\). We note that by the definition of $f_\mu$ we have $\phi(0)=0$, which implies that $\phi$ is continuous on $[0,+\infty)$. Indeed for the latter, if $M>0$ is fixed, then for every $0<x<M$, by convexity \[\phi(x) \leq \frac{x}{M}\,\phi(M).\] Taking limits $x \to 0$, yields the continuity at $0$.

For $t \geq 0$, define \[\rho_t=\sup\{r \geq 0: \phi(r) \leq t\}.\] It is apparent that $t \mapsto \rho_t$ is increasing. Since $\phi$ is continuous, finite and $\phi(r) \to +\infty,$ as $r \to +\infty$, we have \[\phi(\rho_t)=t, \quad t \geq 0.\] By continuity of $\phi$, we can deduce that \[\rho_t \to +\infty, \] as $t \to +\infty.$

\begin{lemma}\label{lem:rho-concave}
    The function \[t \mapsto \rho_t, \quad t \geq0\] is concave.
\end{lemma}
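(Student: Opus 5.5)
We prove concavity of $t\mapsto\rho_t$ by showing that its hypograph $\{(t,s): t\ge 0,\ 0\le s\le \rho_t\}$ is convex. The key observation is that $\rho_t$ is a generalized inverse of the convex increasing function $\phi$, and the inverse of an increasing convex function is concave.

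First I would record the description $\{s\ge 0: s\le \rho_t\}=\{s\ge0:\phi(s)\le t\}$. One inclusion is immediate from the definition of $\rho_t$ as a supremum together with $\phi(\rho_t)=t$. For the other, let $s\le\rho_t$. Since $\phi$ is increasing, $\phi(s)\le\phi(\rho_t)=t$. Conversely, if $\phi(s)\le t$, then $s\le\rho_t$ by the definition of the supremum. Here we also use that $\rho_t<\infty$, because $\phi(r)\to+\infty$.

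Next, fix $t_1,t_2\ge0$ and $\lambda\in[0,1]$, and put $s=\lambda\rho_{t_1}+(1-\lambda)\rho_{t_2}$. Convexity of $\phi$ on $[0,+\infty)$ gives
\[
\phi(s)\le \lambda\phi(\rho_{t_1})+(1-\lambda)\phi(\rho_{t_2})=\lambda t_1+(1-\lambda)t_2 .
\]
By the characterization above, $s\le\rho_{\lambda t_1+(1-\lambda)t_2}$, which is exactly the concavity inequality.

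The only point needing care is the well-definedness of $\rho_t$ and the identity $\phi(\rho_t)=t$. Both were established just before the lemma from the continuity of $\phi$, $\phi(0)=0$, and $\phi\to\infty$. Once these are available, the argument is a single application of the convexity of $\phi$, and I do not expect any real obstacle.
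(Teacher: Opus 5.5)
Your argument is correct and is essentially the paper's proof: you apply convexity of $\phi$ at $\lambda\rho_{t_1}+(1-\lambda)\rho_{t_2}$, then use the definition of $\rho_{\lambda t_1+(1-\lambda)t_2}$ as a supremum. The hypograph framing and the two-sided characterization of $\{s\ge 0:\phi(s)\le t\}$ are more than you need, since the argument only uses the implication $\phi(s)\le t\Rightarrow s\le\rho_t$, which follows directly from the definition of the supremum.
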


\begin{proof} 
    Let $t,s\geq 0$ and $\lambda \in [0,1]$. By convexity of $\phi$
    \[\phi(\lambda \rho_t+(1-\lambda)\rho_s ) \leq \lambda t+(1-\lambda)s.\] Therefore, by its definition \[\lambda\rho_t+(1-\lambda)\rho_s \leq \rho_{\lambda t+(1-\lambda)s}.\]
\end{proof}

We can also control the increments of $\rho_t$.

\begin{lemma}\label{lem:rho-increments}
For every $t>0$
\[
0 \leq\frac{\rho_{t+1}-\rho_t}{\rho_t} \leq \frac{1}{t}.
\]
\end{lemma}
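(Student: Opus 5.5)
The lower bound $0\le(\rho_{t+1}-\rho_t)/\rho_t$ is immediate, because $t\mapsto\rho_t$ is increasing and $\rho_t>0$ for $t>0$. The latter holds since $\phi(0)=0<t$ and $\phi$ is continuous, so $\phi\le t$ on a neighbourhood of $0$. All the work is in the upper bound $\rho_{t+1}-\rho_t\le \rho_t/t$, and for that I would use the concavity of $t\mapsto\rho_t$ from Lemma~\ref{lem:rho-concave} together with the value at the origin, $\rho_0\ge 0$.

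The key step is a chord-slope comparison. For a concave function on $[0,+\infty)$, the slope of the chord over $[t,t+1]$ is at most the slope of the chord over $[0,t]$, since slopes of chords decrease as the interval moves to the right. Concretely, write $t$ as a convex combination of $0$ and $t+1$:
\[
t=\frac{1}{t+1}\cdot 0+\frac{t}{t+1}(t+1).
\]
Lemma~\ref{lem:rho-concave} then gives
\[
\rho_t\ \ge\ \frac{1}{t+1}\rho_0+\frac{t}{t+1}\rho_{t+1}\ \ge\ \frac{t}{t+1}\rho_{t+1},
\]
where the second inequality uses $\rho_0\ge0$. Rearranging, $(t+1)\rho_t\ge t\rho_{t+1}$, that is, $t(\rho_{t+1}-\rho_t)\le\rho_t$. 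Dividing by $t\rho_t>0$ yields the claim.

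I do not expect a real obstacle here. The only points needing care are that $\rho_0\ge0$, which holds by definition as a supremum over $r\ge0$, and that $\rho_t>0$ so that the division is legitimate. Note also that the argument only needs $\rho_0\ge 0$, not $\rho_0=0$. This matters because $\phi$ may vanish on an interval $[0,\rho_0]$ with $\rho_0>0$.
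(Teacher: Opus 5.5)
Your proof is correct and is essentially the paper's argument. The paper also writes $t=\frac{1}{t+1}\cdot 0+\frac{t}{t+1}(t+1)$, applies the concavity of $t\mapsto\rho_t$ from Lemma~\ref{lem:rho-concave}, drops the term $\frac{1}{t+1}\rho_0\ge 0$, and rearranges; your additional remarks that $\rho_t>0$ for $t>0$ (which justifies the division) and that only $\rho_0\ge 0$ is needed are accurate details the paper leaves implicit.
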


\begin{proof}
The left-hand side inequality is obvious. So we will only deal with the right-hand side one.
For $t>0$ we write \[t=\frac{t}{t+1}(1+t)+\frac{1}{t+1}\,0.\] So by Lemma~\eqref{lem:rho-concave} \[\rho_t \geq \frac{t}{t+1}\rho_{t+1}+\frac{1}{t+1}\rho_0 \geq \frac{t}{t+1}\rho_{t+1},
\]
 which yields the lemma.
\end{proof}

Now we define the density superlevel sets
\[
R_t(\mu):=\{x:f_\mu(x)\ge e^{-t}f_\mu(0)\}, \quad t>0.
\]
Notice that since \(\mu\) is rotationally invariant,
\[
R_t(\mu)=\rho_tB_2^d,
\]
We also set
\[
q_\mu(r):=\mu(\{x:\langle x,e_1\rangle\ge r\}), \quad r>0.
\]
By rotational invariance, $q_\mu(r)$ is the measure of every half-space that its distance from the origin is $r$ and does not contain the origin. Furthermore,  $q_\mu$ is decreasing.

We now choose a sequence \(t_n\to+\infty\), and set
\begin{equation}\label{eq:r-h-a}
r_n:=\rho_{t_n},
\quad
h_n:=\rho_{t_n+1}-\rho_{t_n},
\quad
a_n:=\sqrt{r_n h_n}.
\end{equation}

\begin{lemma}\label{lem:choice-tn}
There exists a strictly increasing sequence $(t_n)_{n \in \mathbb{N}}$, with \(t_n\to+\infty,\) as $n \to \infty$, such that for every $n \in \mathbb{N}$
\[
\frac{c_{\mu,d}}{n} = f_\mu(r_n)a_n^{d-1}h_n\leq \frac1n,
\] where $c_{\mu,d}>0$ is a constant depending on $\mu$. 
\end{lemma}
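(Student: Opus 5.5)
The plan is to define the function
\[
G(t):=f_\mu(\rho_t)\,\bigl(\rho_t(\rho_{t+1}-\rho_t)\bigr)^{(d-1)/2}(\rho_{t+1}-\rho_t), \qquad t>0,
\]
so that $f_\mu(r_n)a_n^{d-1}h_n=G(t_n)$. Since $f_\mu(\rho_t)=f_\mu(0)e^{-t}$, we can write $G(t)=f_\mu(0)e^{-t}\rho_t^{(d-1)/2}h(t)^{(d+1)/2}$ with $h(t)=\rho_{t+1}-\rho_t$. The aim is to show that $G$ is continuous, positive, and tends to $0$ as $t\to\infty$. Then, for each $n$ large, I would take $t_n$ with $G(t_n)=c/n$ for a suitably small constant $c$. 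For small $n$, where $c/n$ may exceed $\sup G$, I would shrink $c=c_{\mu,d}$ so that $c\le \inf$ of the relevant values. It is cleanest to choose $c\le \min(1,G(t_0))$ for some fixed $t_0$, so that $c/n\le G(t_0)$ for all $n\ge1$.

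\emph{Continuity and positivity.} Lemma~\ref{lem:rho-concave} says $t\mapsto\rho_t$ is concave on $[0,\infty)$, hence continuous on $(0,\infty)$. Therefore $h(t)$ and $G(t)$ are continuous there. Positivity of $h$ follows from $\phi(\rho_{t+1})=t+1>t=\phi(\rho_t)$, which forces $\rho_{t+1}>\rho_t$. Also $\rho_t>0$ for $t>0$, since $\phi(0)=0$ and $\phi$ is continuous.

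\emph{Decay.} Concavity of $\rho$ with $\rho_0\ge 0$ gives $\rho_t\le \rho_1 t$ for $t\ge1$, by comparing slopes from $0$: indeed $\rho_t/t$ is decreasing. Hence $h(t)\le \rho_{t+1}\lesssim t+1$. By Lemma~\ref{lem:rho-increments} one can also use $h(t)\le\rho_t/t$. In either case $\rho_t^{(d-1)/2}h(t)^{(d+1)/2}$ grows at most polynomially in $t$, so $G(t)\to0$ because of the factor $e^{-t}$.

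\emph{Choosing a strictly increasing sequence.} This is the step requiring the most care, since $G$ need not be monotone. I would define
\[
t_n:=\sup\{t\ge t_0: G(t)\ge c/n\}.
\]
This supremum is finite because $G\to0$, and the set is nonempty since $G(t_0)\ge c/n$. By continuity $G(t_n)=c/n$: we have $G(t_n)\ge c/n$ by closedness, and $G(t_n)>c/n$ is impossible, since continuity would then give larger admissible $t$.

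The sequence is increasing. Because $c/(n+1)<c/n$, the set for $n+1$ contains the set for $n$, so $t_{n+1}\ge t_n$. Equality is impossible, since it would force $G(t_n)=c/n=c/(n+1)$. Hence the sequence is strictly increasing.

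Finally, $t_n\to\infty$. If instead $t_n\to T<\infty$, continuity would give $G(T)=\lim c/n=0$, contradicting positivity. This yields $f_\mu(r_n)a_n^{d-1}h_n=c_{\mu,d}/n\le 1/n$, where $c_{\mu,d}=\min(1,G(t_0))$ depends only on $\mu$ and $d$.
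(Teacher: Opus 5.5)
Your proof is correct and takes essentially the same route as the paper. You use the same function $G$ and the same constant $c_{\mu,d}=\min\{1,G(t_0)\}$, get decay from the factor $e^{-t}$ against polynomial growth of $\rho_t$, and use continuity to hit each value $c_{\mu,d}/n$. The differences are cosmetic: you take $t_n$ as the last crossing $\sup\{t\ge t_0: G(t)\ge c_{\mu,d}/n\}$ rather than the paper's inductive intermediate-value choice, and you bound $\rho_t\le\rho_1 t$ directly from concavity rather than via a linear lower bound on $\phi$.

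Your explicit argument that $t_n\to\infty$, using positivity of $G$, fills in a step the paper leaves implicit. Also fix $t_0\ge 1$, as the paper does, because $t_n\ge 1$ is used later in Lemma~\ref{lem:relations}.
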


\begin{proof}
Define
\[
G(t):=f_\mu(\rho_t)\,
\bigl(\rho_t(\rho_{t+1}-\rho_t)\bigr)^{\frac{d-1}{2}}
(\rho_{t+1}-\rho_t), \quad t>0,
\]
which is continuous, by the continuity of $\rho_t$ and $f_\mu$. We will first check that \(G(t)\to 0\), as $t \to +\infty$.

By Lemma~\ref{lem:rho-increments} for $t \geq 1$, \[\rho_{t+1}-\rho_t\le \rho_t.\] Hence, as $f_\mu(\rho_t)=f_\mu(0)e^{-t}$,
\[
G(t)\le f_\mu(0)e^{-t}\rho_t^d.
\]
Since \(\phi\) is increasing, convex, and tends to \(+\infty\), there exist \(r_\mu>0\) and \(\tilde{c}_\mu>0\) such that
\[
\phi(r)\ge \tilde{c}_\mu r,
\quad r\ge r_\mu.
\]
Therefore \(\rho_t\le C_\mu t\) for all large \(t\), and thus
\[
G(t)\le f_\mu(0)C_\mu^d e^{-t}t^d\to 0, 
\]
as $ t \to +\infty$. Fix \(t_0 \geq 1\) with \(G(t_0)>0\) and set \[c_{\mu,d}:=\min\{1,G(t_0)\}>0.\] We can inductively choose \[1 \leq t_1<t_2<\ldots\] such that $G(t_n)=c_{\mu,d}/n$ for every $n \in \mathbb{N}$. Indeed, we first choose $t_1$. If $G(t_0)=c_{\mu,d}$, then set $t_1=t_0$. Assuming otherwise, since $G(t) \to 0$, as $t \to +\infty$, by continuity of $G$, we can find $t_1 > t_0$, with $G(t_1)=c_{\mu,d}$. Suppose now that $t_n$ has been chosen. Notice that \[\frac{c_{\mu,d}}{n+1}<G(t_n).\] Because $G(t) \to 0$, as $t \to +\infty$, there exists $T>t_n$ such that \[G(T)< \frac{c_{\mu,d}}{n+1}.\] Continuity of $G$ then yields some $t_{n+1} \in (t_n,T)$ with \[G(t_{n+1})=\frac{c_{\mu,d}}{n+1}.\]
\end{proof}

\begin{remark}
    At this point, we discuss the parameters $r_n,a_n$ and $h_n$ associated with the probability measures $\mu_p$ of Corollary~\ref{cor:classical-distributions}. Fix $p \geq 1$ and observe that \[\rho_t=(pt)^{1/p}, \quad t>0.\] Since $\rho_t^\prime=\rho_t^{1-p}$ for every $t>0$, using the mean value theorem we can deduce that \[\frac{h_n}{r_n^{1-p}}\to 1,\] as $n \to \infty$. Hence, \eqref{eq:r-h-a} and Lemma~\ref{lem:choice-tn} imply that \[r_n\asymp_p (\log n)^{1/p}.\] Insert these estimates in Theorem~\ref{thm:main-general-radial} and Theorem~\ref{thm:number-of faces} and we obtain Corollary~\ref{cor:classical-distributions}.
\end{remark}

With the previous definition, we can lower bound the following quantities, independently of $\mu$.
\begin{lemma}\label{lem:relations}
We have that \[\frac{h_n}{a_n}, \ \frac{a_n}{r_n} \lesssim \frac{1}{\sqrt{\log n}}, \quad \frac{h_n}{r_n} \lesssim \frac{1}{\log n}.\]  
\end{lemma}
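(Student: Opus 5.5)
The three bounds reduce to one. Since $a_n=\sqrt{r_nh_n}$, we have
\[
\frac{h_n}{a_n}=\sqrt{\frac{h_n}{r_n}}=\frac{a_n}{r_n},
\]
so it suffices to prove $h_n/r_n\lesssim 1/\log n$. By Lemma~\ref{lem:rho-increments}, $h_n/r_n\le 1/t_n$. The whole statement therefore follows from a lower bound $t_n\gtrsim \log n$, at least for $n\ge n_0$. For the finitely many $n<n_0$ the claim is trivial, because $t_n\ge1$ gives $h_n/r_n\le1$ (with a constant depending on $n_0$).

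To bound $t_n$ from below I would use the defining identity from Lemma~\ref{lem:choice-tn}:
\[
\frac{c_{\mu,d}}{n}=G(t_n)=f_\mu(0)\,e^{-t_n}\,r_n^{(d-1)/2}h_n^{(d+1)/2}.
\]
Taking logarithms gives
\[
t_n=\log n+\log\frac{f_\mu(0)}{c_{\mu,d}}+\tfrac{d-1}{2}\log r_n+\tfrac{d+1}{2}\log h_n.
\]
The term $\log r_n$ is harmless from below. Indeed $r_n=\rho_{t_n}\ge\rho_{1}>0$, and from above $\rho_t\le C_\mu t$, so $\log r_n=O(\log t_n)$. The only dangerous term is $\log h_n$, which can be very negative when $\phi$ grows fast. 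So the goal becomes
\[
\log\frac{1}{h_n}\le \bigl(1-\epsilon\bigr)\,t_n+O(1)
\]
for some $\epsilon>0$. This would give $\epsilon t_n\ge \log n-O(\log t_n)$, hence $t_n\gtrsim\log n$ for large $n$.

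To control $h_n$ from below I would use convexity of $\phi$. The slope over $[\rho_{t_n},\rho_{t_n+1}]$ equals $1/h_n$, and it is at most $\phi'_-(\rho_{t_n+1})\le \phi(\rho_{t_n+1}+1)-\phi(\rho_{t_n+1})$. This yields $h_n\ge 1/\phi(\rho_{t_n+1}+1)$. For all the natural profiles (powers, exponentials, $e^{r^2}$, and so on) this gives $\log(1/h_n)=O(\log t_n)$, and the argument closes.

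The main obstacle is this step. For a general convex $\phi$ growing extremely fast (tower-type growth), $\phi(r+1)$ need not be polynomial in $\phi(r)$. Then the crude bound on $h_n$ fails, and the $\log h_n$ term could eat the $\log n$. If that happens, I would first try to exploit the freedom in Lemma~\ref{lem:choice-tn}. A second option is to argue directly from the monotonicity of $G$ along the sequence: compare $G(t_n)$ with $G(t_n/2)$, using concavity of $t\mapsto\rho_t$ to relate $h$ at the two levels. This would show that $G$ decays at least like $e^{-ct}$, so that $c_{\mu,d}/n=G(t_n)\le Ce^{-ct_n}$, which is exactly $t_n\gtrsim\log n$.
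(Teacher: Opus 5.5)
Your reduction to the single bound $h_n/r_n\lesssim 1/\log n$ is correct, and so is your logarithmic form of $G(t_n)=c_{\mu,d}/n$. The route through $h_n/r_n\le 1/t_n$ and a lower bound $t_n\gtrsim\log n$ cannot work in general, because that intermediate claim is false for some admissible $\mu$, exactly in the fast-growth regime you flagged.

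Here is an example. Take $\rho$ piecewise linear, with slope $\epsilon_k=e^{-ks_k}$ on $[s_k,s_{k+1})$ and $s_{k+1}-s_k\ge 1/\epsilon_k$.
\begin{itemize}
\item $\rho$ is concave, increasing and unbounded, so $\phi=\rho^{-1}$ is convex and finite with $\phi(0)=0$. Hence $\mu$ is admissible.
\item For $t\ge s_k$ we have $h(t)\le\epsilon_k$ and $\rho_t\le t$, so $G(t)\lesssim_\mu \epsilon_k^{(d+1)/2}$.
\item Let $n$ be the first index with $t_n>s_k$. Then $\log n\gtrsim k s_k$, while $t_{n-1}\le s_k$.
\item Hence $t_{n-1}/\log(n-1)\to 0$ as $k\to\infty$.
\end{itemize}
This applies to every sequence with $G(t_n)=c_{\mu,d}/n$, so the freedom in Lemma~\ref{lem:choice-tn} does not help.

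Your second option runs in the wrong direction. An estimate $G(t)\le Ce^{-ct}$ gives $e^{ct_n}\le Cn/c_{\mu,d}$, which is an \emph{upper} bound $t_n\lesssim\log n$. The paper in fact already has $G(t)\lesssim_\mu e^{-t}t^d$ in Lemma~\ref{lem:choice-tn}. A lower bound on $t_n$ would need $G(t)\ge ce^{-Ct}$, and that is precisely what a tiny $h$ destroys.

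The missing idea is that a tiny $h_n$ only hurts your detour through $1/t_n$; for the target inequality it helps. Rewrite the identity as
\[
\Bigl(\frac{h_n}{r_n}\Bigr)^{\frac{d+1}{2}}=\frac{c_{\mu,d}}{n}\cdot\frac{e^{t_n}}{f_\mu(0)\,r_n^{d}}
\]
and split into two cases, as the paper does.
\begin{itemize}
\item If $t_n\ge\frac12\log n$, then $h_n/r_n\le 1/t_n\le 2/\log n$.
\item If $t_n<\frac12\log n$, then $h_n/r_n\lesssim n^{-1/(d+1)}\lesssim 1/\log n$, provided $f_\mu(0)r_n^d\gtrsim 1$ (and using $c_{\mu,d}\le1$).
\end{itemize}
The paper proves $f_\mu(0)\rho_t^d\gtrsim 1$ for $t\ge 1$ as follows. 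Since $\phi(r)/r$ is increasing, $\phi(r)\ge tr/\rho_t$ for $r\ge\rho_t$; integrating the density in polar coordinates then gives the bound.

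This argument yields constants depending only on $d$, as the notation $\lesssim$ and the phrase ``independently of $\mu$'' require. Your estimates go through $\rho_1$, $C_\mu$, $\log(f_\mu(0)/c_{\mu,d})$ and $n_0$, so they would be $\mu$-dependent even in the cases where they succeed.
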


\begin{proof}
    It suffices to prove the last inequality, because \[\frac{a_n}{r_n}=\frac{h_n}{a_n}=\sqrt\frac{h_n}{r_n}.\]
    To begin with, the definition of $r_n$ in \eqref{eq:r-h-a} and Lemma~\ref{lem:choice-tn} give \begin{equation}\label{eq:rn-definition}
        \frac{c_{\mu,d}}{n}=f_\mu(0)e^{-t_n}r_n^{d}\left(\frac{h_n}{r_n}\right)^{\frac{d+1}{2}}
    \end{equation}
    Convexity of $\phi$ and $\phi(0)=0$ imply that $t \mapsto \phi(t)/t$ is increasing on $(0,+\infty)$. Hence, integrating in polar coordinates and using $\phi(\rho_t)=t$, 
    \begin{align*}
        1&=d\omega_df_\mu(0)\int_{0}^{+\infty} r^{d-1} e^{-\phi(r)}
        \, dr \\
        &\leq d\omega_df_\mu(0) \left[\int_{0}^{\rho_t}r^{d-1} \,dr+\int_{\rho_t}^{+\infty}r^{d-1}e^{-tr/\rho_t}\, dr\right]\\
        &=d\omega_df_\mu(0)\,\rho_t^d\left[\frac{1}{d} +\int_{1}^{+\infty} u^{d-1}e^{-tu} \, du \right] \lesssim f_\mu(0)\rho_t^d,
    \end{align*} if $t \geq 1$ and therefore \[f_\mu(0)\rho_t^d \gtrsim 1, \quad t \geq 1.\] Since by construction $t_n \geq 1$ for every $n \in \mathbb{N}$, applying the last inequality for $t=t_n$, \eqref{eq:rn-definition} yields \[\frac{1}{n}\geq \frac{c_{\mu,d}}{n} \gtrsim e^{-t_n}\left(\frac{h_n}{r_n}\right)^{\frac{d+1}{2}}. \] Thus, \begin{equation}\label{eq:fraction-upper-bound}
    \frac{h_n}{r_n} \lesssim e^{\frac{2t_n}{d+1}}\,n^{-\frac{2}{d+1}}.\end{equation} Now Lemma~\ref{lem:rho-increments} gives \[\frac{h_n}{r_n}\leq \frac{1}{t_n}.\] We split into two cases. If $t_n\geq \frac{\log n}{2}$ then \[\frac{h_n}{r_n} \leq \frac{2}{\log n}.\] If $t_n<\frac{\log n}{2}$, then from \eqref{eq:fraction-upper-bound} \[\frac{h_n}{r_n} \lesssim n^{-\frac{1}{d+1}} \lesssim \frac{1}{\log n}.\]
\end{proof}

We will now estimate the measure of the half-spaces, not containing the origin, and having distance $r_n$ from it.

\begin{proposition}\label{prop:q-shell-comparison}
We have that
\[
q_\mu(r_n)\asymp f_\mu(r_n)a_n^{d-1}h_n.
\]
\end{proposition}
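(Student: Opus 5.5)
Write $x=(r+s)e_1+y$ with $s\ge0$ and $y\in e_1^\perp\cong\mathbb{R}^{d-1}$. Then
\[
q_\mu(r_n)=\int_0^\infty\int_{\mathbb{R}^{d-1}} f_\mu(0)e^{-\phi\left(\sqrt{(r_n+s)^2+\|y\|_2^2}\right)}\,dy\,ds .
\]
The plan is to compare the exponent with $t_n+$(a quantity of order $s/h_n+\|y\|_2^2/a_n^2$) in both directions, and then integrate. The geometric intuition is that $\mu$ restricted to the cap looks like a Gaussian of width $a_n$ in the tangential directions times an exponential of scale $h_n$ in the radial direction.

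\textbf{Lower bound.} Restrict to the region $D=\{0\le s\le h_n/2,\ \|y\|_2\le a_n/2\}$ (taking constant fractions). On $D$ we have
\[
\|x\|_2\le r_n+s+\frac{\|y\|_2^2}{2r_n}\le r_n+\frac{h_n}{2}+\frac{a_n^2}{8r_n}\le r_n+h_n=\rho_{t_n+1},
\]
using $a_n^2=r_nh_n$. Since $\phi$ is increasing, $f_\mu(x)\ge f_\mu(0)e^{-t_n-1}=e^{-1}f_\mu(r_n)$ on $D$. Also $\mathrm{vol}(D)\asymp a_n^{d-1}h_n$, which gives $q_\mu(r_n)\gtrsim f_\mu(r_n)a_n^{d-1}h_n$.

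\textbf{Upper bound.} Convexity of $\phi$ together with $\phi(\rho_{t_n+1})-\phi(\rho_{t_n})=1$ gives
\[
\phi(\rho)\ge t_n+\frac{\rho-r_n}{h_n}\quad\text{for }\rho\ge r_n,
\]
since the slope of $\phi$ to the right of $r_n$ is at least the chord slope on $[r_n,r_n+h_n]$, which equals $1/h_n$. Now let $\rho=\sqrt{(r_n+s)^2+\|y\|_2^2}$. I would use
\[
\rho-r_n\ge s+\frac{\|y\|_2^2}{\rho+r_n+s}
\]
and split into two regimes.

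In the regime $\|y\|_2\le r_n$ and $s\le r_n$, we have $\rho\lesssim r_n$, so $\rho-r_n\gtrsim s+\|y\|_2^2/r_n$. The integral over this part is then at most
\[
f_\mu(r_n)\int e^{-s/h_n}\,ds\int e^{-c\|y\|_2^2/(r_nh_n)}\,dy\asymp f_\mu(r_n)\,h_n\,a_n^{d-1}.
\]

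The complementary regime ($s>r_n$ or $\|y\|_2>r_n$) is the main obstacle. There $\rho-r_n\gtrsim r_n$ at least, and in fact $\rho-r_n\ge c\|x\|_2$, so the integrand is at most
\[
f_\mu(r_n)e^{-c\|x\|_2/h_n}.
\]
Integrating over $\{\|x\|_2\ge r_n\}$ gives a bound of order
\[
f_\mu(r_n)\,h_n^d\,(r_n/h_n)^{d-1}e^{-cr_n/h_n}.
\]
Since $r_n/h_n\ge t_n\ge1$ by Lemma~\ref{lem:rho-increments} (and it is large by Lemma~\ref{lem:relations}), this quantity is $\lesssim f_\mu(r_n)h_n^d(r_n/h_n)^{(d-1)/2}=f_\mu(r_n)a_n^{d-1}h_n$, because $x^{(d-1)/2}e^{-cx}$ is bounded. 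Care is needed to get a clean linear lower bound of $\rho-r_n$ in $\|x\|_2$ in this regime; the cleanest device is probably $\rho-r_n\ge(\rho-r_n)/\rho\cdot\rho$ together with $\rho\ge\sqrt2\,r_n$ when $\|y\|_2>r_n$.

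Combining the two regimes gives $q_\mu(r_n)\lesssim f_\mu(r_n)a_n^{d-1}h_n$, which together with the lower bound proves the proposition.
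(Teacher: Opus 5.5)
Your argument works in outline, but one displayed step is false as stated. Convexity of $\phi$ does \emph{not} give $\phi(\rho)\ge t_n+(\rho-r_n)/h_n$ for all $\rho\ge r_n$. By the three-chord inequality, a convex function lies \emph{below} its chord on $[r_n,r_n+h_n]$, so there $\phi(\rho)\le t_n+(\rho-r_n)/h_n$. The inequality is strict for $\phi(r)=r^2/2$ at $\rho=r_n+h_n/2$. The slope comparison you invoke only holds to the right of $r_n+h_n$.

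The repair is immediate. For $\rho\ge r_n+h_n$, the three-chord inequality gives $\phi(\rho)\ge t_n+(\rho-r_n)/h_n$. For $\rho\in[r_n,r_n+h_n]$, monotonicity gives $\phi(\rho)\ge t_n$. Hence $\phi(\rho)\ge t_n-1+(\rho-r_n)/h_n$ for all $\rho\ge r_n$, and both regimes go through with an extra factor $e$.

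With this correction the rest checks out:
\begin{itemize}
\item $\rho-r_n=s+\|y\|_2^2/(\rho+r_n+s)$ is an exact identity.
\item In the near regime, $\rho+r_n+s\le 5r_n$, which yields the majorant $e\,f_\mu(r_n)e^{-s/h_n}e^{-\|y\|_2^2/(5a_n^2)}$.
\item In the far regime, $\rho\ge\sqrt2\,r_n$, so $\rho-r_n\ge(1-2^{-1/2})\rho$.
\item The tail bound $\int_X^\infty u^{d-1}e^{-cu}\,du\lesssim X^{d-1}e^{-cX}$ is legitimate because $X=r_n/h_n\ge t_n\ge 1$ by Lemma~\ref{lem:rho-increments}.
\item Your cylinder lower bound is correct as written.
\end{itemize}

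Your route differs from the paper's. The paper decomposes $\{x_1\ge r_n\}$ into the density shells $R_{t_n+k+1}(\mu)\setminus R_{t_n+k}(\mu)$ and bounds the density by $e^{-k}f_\mu(r_n)$ on the $k$-th shell. It then uses concavity of $t\mapsto\rho_t$ (Lemma~\ref{lem:rho-concave}) to place that shell inside a cap of height $(k+1)h_n$. Summing the cap-volume estimate \eqref{eq:cap-volume} for $k<\lfloor r_n/(4h_n)\rfloor$, with a crude ball bound beyond, finishes the upper bound. The paper's lower bound is the first shell cap.

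You use the same input in dual form, convexity of $\phi$ rather than concavity of $\rho_t$, but pointwise. This gives an explicit ``exponential in $s$ at scale $h_n$ times Gaussian in $y$ at scale $a_n$'' majorant, which you integrate directly in cylindrical coordinates; for the lower bound you use a cylinder instead of a cap. Your version makes the local product structure at the scales $h_n,a_n$ transparent. It avoids the cap-volume estimate and needs only $r_n/h_n\ge1$.

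The paper's shell-and-cap bookkeeping has the advantage that it is reused almost verbatim in Lemma~\ref{lem:inward-q}. There the region $\{\|x\|_2<r_n\}$ must also be handled, and your linear lower bound on $\phi$ gives no information in that region.
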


\begin{proof}
Since \(t\mapsto \rho_t\) is concave, for every \(k\ge 0\),
\begin{equation}\label{eq:rho-concavity-step}
\rho_{t_n+k+1}-r_n
\le (k+1)(\rho_{t_n+1}-\rho_{t_n})
=(k+1)h_n.
\end{equation}
We recall the standard cap-volume estimate. If
\[
\Gamma(R,\Delta):=\{x\in\mathbb R^d:\ \|x\|_2\le R,\ x_1\ge R-\Delta\},
\quad
0<\Delta\le \frac R2,
\]
is a cap of height \(\Delta\) on the ball of radius \(R\), then
\begin{equation}\label{eq:cap-volume}
\vol_d(\Gamma(R,\Delta))
\asymp
(R\Delta)^{\frac{d-1}{2}}\Delta.
\end{equation}
Indeed,
\begin{align*}
\vol_d(\Gamma(R,\Delta))
&=
\int_{R-\Delta}^R
\vol_{d-1}\bigl(\Gamma(R,\Delta)\cap\{x_1=s\}\bigr)\,ds  \\
&=
\omega_{d-1}\int_{R-\Delta}^R
(R^2-s^2)^{\frac{d-1}{2}}\,ds.
\end{align*}
For \(s\in[R-\Delta,R]\),
\[
R^2-s^2=(R-s)(R+s)\asymp R(R-s),
\]
which gives \eqref{eq:cap-volume}.

\medskip

\noindent\textbf{Lower bound.}
Let
\[
A_{n,0}:=
\{x:\langle x,e_1\rangle\ge r_n\}
\cap
\bigl(R_{t_n+1}(\mu)\setminus R_{t_n}(\mu)\bigr).
\]
Since
\[
R_{t_n}(\mu)=r_nB_2^d,
\quad
R_{t_n+1}(\mu)=(r_n+h_n)B_2^d,
\]
we have
\[
\vol_d(A_{n,0})
=
\vol_d(\Gamma(r_n+h_n,h_n)).
\]
Because \(h_n/r_n\to0\) by Lemma~\ref{lem:relations}, \eqref{eq:cap-volume} yields
\begin{equation}\label{eq:vol-lower-shell}
\vol_d(A_{n,0})
\asymp
(r_nh_n)^{\frac{d-1}{2}}h_n
=
a_n^{d-1}h_n.
\end{equation}
Moreover, for every \(x\in A_{n,0}\),
\[
e^{-1}f_\mu(r_n)\le f_\mu(x)\le f_\mu(r_n),
\]
because \(x\in R_{t_n+1}(\mu)\setminus R_{t_n}(\mu)\). Hence
\[
\mu(A_{n,0})
\asymp
f_\mu(r_n)\vol_d(A_{n,0})
\asymp
f_\mu(r_n)a_n^{d-1}h_n.
\]
Since \(A_{n,0}\subseteq \{x:\langle x,e_1\rangle\ge r_n\}\), we obtain
\begin{equation}\label{eq:q-lower}
q_\mu(r_n)\gtrsim f_\mu(r_n)a_n^{d-1}h_n.
\end{equation}

\noindent\textbf{Upper bound.}
For \(k\ge0\), define
\[
A_{n,k}:=
\{x:\langle x,e_1\rangle\ge r_n\}
\cap
\bigl(R_{t_n+k+1}(\mu)\setminus R_{t_n+k}(\mu)\bigr).
\]
Then
\[
\{x:\langle x,e_1\rangle\ge r_n\}
=
\bigcup_{k=0}^\infty A_{n,k}
\]
as a disjoint union and therefore
\[
q_\mu(r_n)=\sum_{k=0}^\infty \mu(A_{n,k}).
\]
For \(x\in A_{n,k}\), we have
\begin{equation}\label{eq:density-upper-k}
f_\mu(x)\le e^{-k}f_\mu(r_n).
\end{equation}
Also, by \eqref{eq:rho-concavity-step},
\begin{equation}\label{eq:rho-upper-k}
\rho_{t_n+k+1}-r_n\le (k+1)h_n.
\end{equation}
Hence
\[
A_{n,k}
\subseteq
\Gamma\bigl(\rho_{t_n+k+1},\,\rho_{t_n+k+1}-r_n\bigr).
\]
Let
\[
K_n:=\left\lfloor \frac{r_n}{4h_n}\right\rfloor.
\]
If \(0\le k\le K_n-1\), then, 
\[
\rho_{t_n+k+1}-r_n\le (k+1)h_n\le \frac{r_n}{4}.
\]
Therefore \eqref{eq:cap-volume} gives
\[
\vol_d(A_{n,k})
\lesssim
\bigl(r_n(k+1)h_n\bigr)^{\frac{d-1}{2}}(k+1)h_n
=
(k+1)^{\frac{d+1}{2}}a_n^{d-1}h_n.
\]
Together with \eqref{eq:density-upper-k},
\begin{equation}\label{eq:upper-small-k}
\mu(A_{n,k})
\lesssim
e^{-k}(k+1)^{\frac{d+1}{2}}
f_\mu(r_n)a_n^{d-1}h_n.
\end{equation}
Thus
\[
\sum_{k=0}^{K_n-1}\mu(A_{n,k})
\lesssim
f_\mu(r_n)a_n^{d-1}h_n,
\]
because
\[
\sum_{k=0}^\infty e^{-k}(k+1)^{\frac{d+1}{2}}<+\infty.
\]
If \(k \geq K_n\), then
\[
A_{n,k}\subseteq R_{t_n+k+1}(\mu).
\]
By \eqref{eq:rho-upper-k},
\[
\rho_{t_n+k+1}\le r_n+(k+1)h_n\leq 5(k+1)h_n
\]
for all \(k \geq K_n\). Hence
\[
\vol_d(A_{n,k})
\lesssim
(k+1)^d h_n^d.
\]
Again by \eqref{eq:density-upper-k},
\begin{equation}\label{eq:upper-large-k}
\mu(A_{n,k})
\lesssim
e^{-k}(k+1)^d f_\mu(r_n)h_n^d.
\end{equation}
Therefore
\[
\sum_{k=K_n}^{\infty}\mu(A_{n,k})
\lesssim
f_\mu(r_n)h_n^d
\sum_{k=K_n}^{\infty} e^{-k}(k+1)^d.
\]
The tail satisfies
\[
\sum_{k=K_n}^{\infty} e^{-k}(k+1)^d
\lesssim e^{-K_n/2}.
\]
 Hence
\[
\frac{
f_\mu(r_n)h_n^d
\sum_{k=K_n}^{\infty} e^{-k}(k+1)^d
}{
f_\mu(r_n)r_n^{\frac{d-1}{2}}h_n^{\frac{d+1}{2}}
}
\lesssim
e^{-K_n/2}
\left(\frac{h_n}{r_n}\right)^{\frac{d-1}{2}} \lesssim 1,
\]  due to Lemma~\ref{lem:relations}.
Thus
\[
\sum_{k=K_n}^{\infty}\mu(A_{n,k})
\lesssim f_\mu(r_n)a_n^{d-1}h_n.
\]
Combining the estimates gives
\begin{equation}\label{eq:q-upper}
q_\mu(r_n)\lesssim f_\mu(r_n)a_n^{d-1}h_n.
\end{equation}
Together with \eqref{eq:q-lower}, this proves the claim.
\end{proof}

As an immediate consequence of Proposition~\ref{prop:q-shell-comparison} and the choice of \(t_n\),
\begin{equation}\label{eq:q-one-over-n}
\frac{c_{\mu,d}}{n} \lesssim q_\mu(r_n)\lesssim \frac1n.
\end{equation}

We shall use the following additional shell-regularity assumption along the sequence \(t_n\).

\begin{lemma}\label{lem:inward-q}
For every fixed \(L>0\), there exists $n_0:=n_0(\mu,L) \in \mathbb{N}$ such that for every $n \geq n_0$,
\[
q_\mu(r_n-Lh_n)\lesssim_L f_\mu(r_n)a_n^{d-1}h_n
,\]

\end{lemma}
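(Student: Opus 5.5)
The plan is to repeat the shell decomposition from the upper bound in Proposition~\ref{prop:q-shell-comparison}, now with the half-space moved inward by $Lh_n$. Two extra ingredients are needed. First, the density at radius $r_n-Lh_n$ is at most a constant $e^{L}$ times $f_\mu(r_n)$. Second, every cap that appears has height larger by at most an additive $Lh_n$. I first choose $n_0(\mu,L)$ so that for $n\ge n_0$ we have $(L+1)h_n\le r_n/8$. This is possible since $h_n/r_n\to0$ by Lemma~\ref{lem:relations}. In particular $r_n-Lh_n>r_n/2>0$.

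\textbf{Step 1: density comparison.} Since $\phi$ is convex, its difference quotients are nondecreasing. Hence
\[
\frac{\phi(r_n)-\phi(r_n-Lh_n)}{Lh_n}\le\frac{\phi(r_n+h_n)-\phi(r_n)}{h_n}=\frac{(t_n+1)-t_n}{h_n}=\frac1{h_n},
\]
using $\phi(\rho_{t_n+1})=t_n+1$ and $\phi(r_n)=t_n$. Therefore $\phi(r_n-Lh_n)\ge t_n-L$. As $\phi$ is increasing, $f_\mu(x)\le e^{L}f_\mu(r_n)$ whenever $\|x\|_2\ge r_n-Lh_n$.

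\textbf{Step 2: the inner cap.} Let $H=\{x:\langle x,e_1\rangle\ge r_n-Lh_n\}$. The set $H\cap r_nB_2^d$ is the cap $\Gamma(r_n,Lh_n)$, and $Lh_n\le r_n/2$. By \eqref{eq:cap-volume} its volume is
\[
\lesssim (r_nLh_n)^{\frac{d-1}{2}}Lh_n\lesssim_L a_n^{d-1}h_n .
\]
Every point of this cap has norm at least $r_n-Lh_n$, so Step~1 bounds the density there by $e^{L}f_\mu(r_n)$. Its $\mu$-measure is thus $\lesssim_L f_\mu(r_n)a_n^{d-1}h_n$.

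\textbf{Step 3: the outer shells.} For $k\ge0$ set $A'_{n,k}=H\cap\bigl(R_{t_n+k+1}(\mu)\setminus R_{t_n+k}(\mu)\bigr)$. On $A'_{n,k}$ the density is at most $e^{-k}f_\mu(r_n)$. By \eqref{eq:rho-concavity-step},
\[
A'_{n,k}\subseteq\Gamma\bigl(\rho_{t_n+k+1},\,\rho_{t_n+k+1}-r_n+Lh_n\bigr),
\]
and this cap has height at most $(k+1+L)h_n$.

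For $k\le K_n-1$, with $K_n=\lfloor r_n/(8h_n)\rfloor$, the height is at most $r_n/4$, which is below half the radius. Then \eqref{eq:cap-volume} gives
\[
\mu(A'_{n,k})\lesssim e^{-k}(k+1+L)^{\frac{d+1}{2}}f_\mu(r_n)a_n^{d-1}h_n .
\]
This is summable in $k$, with a sum depending only on $d$ and $L$.

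For $k\ge K_n$, I use the crude bound $\rho_{t_n+k+1}\le r_n+(k+1)h_n\lesssim(k+1)h_n$. This gives
\[
\mu(A'_{n,k})\lesssim e^{-k}(k+1)^d f_\mu(r_n)h_n^d .
\]
The tail of this sum is $\lesssim f_\mu(r_n)h_n^d e^{-K_n/2}$. As in Proposition~\ref{prop:q-shell-comparison}, this is $\lesssim f_\mu(r_n)a_n^{d-1}h_n$, because $(h_n/r_n)^{(d-1)/2}\le1$.

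Adding Step~2 and Step~3 gives the claim. There is no real obstacle here: the only genuinely new point is Step~1, the convexity-of-slopes comparison, which yields the constant $e^{L}$. The remainder is bookkeeping to ensure every cap height stays below half its radius, and this is what the choice of $n_0$ guarantees.
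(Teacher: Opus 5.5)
Your proof is correct and follows the paper's argument. The paper gets $f_\mu(x)\le e^{L}f_\mu(r_n)$ on $\{\|x\|_2\ge r_n-Lh_n\}$ from concavity of $t\mapsto\rho_t$ (via $\rho_{t_n-L}\le r_n-Lh_n$), which is the dual form of your slope inequality for $\phi$, and it then splits into the same inner cap and outer shells with a threshold $K_n$. Your choice $(L+1)h_n\le r_n/8$ and $K_n=\lfloor r_n/(8h_n)\rfloor$ keeps every cap height below half its radius explicitly, whereas the paper assumes only $t_n>L$ and $r_n/h_n>L$ and leaves that check implicit.
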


\begin{proof}
Pick $n_0 \in \mathbb{N}$ so that $t_{n}>L$ and $r_n/h_n>L$ for every $n \geq n_0$. We can find such $n_0$ by Lemma~\ref{lem:choice-tn} and Lemma~\ref{lem:relations}.

We now record a consequence of the concavity of \(t\mapsto\rho_t\). Since \(\rho_t\) is increasing and concave, its increments are decreasing. Hence, for every fixed \(L> 0\) and every $n \geq n_0$,
\[
\frac{\rho_{t_n}-\rho_{t_n-L}}{L}
\ge
\rho_{t_n+1}-\rho_{t_n}
=
h_n.
\]
Therefore,
\begin{equation}\label{eq:rho-backward}
\rho_{t_n-L}\le r_n-Lh_n.
\end{equation}
Since \(f\) is radial and decreasing, if
\[
r_n-Lh_n\le \|x\|_2\le r_n,
\]
then by \eqref{eq:rho-backward},
\[
\|x\|_2\ge \rho_{t_n-L}.
\]
Thus
\begin{equation}\label{eq:density-inner-control}
f_\mu(x)\le f_\mu(\rho_{t_n-L})
=
e^{-(t_n-L)}f_\mu(0)
=
e^L f_\mu(r_n).
\end{equation}
Decompose
\[
q_\mu(r_n-Lh_n)
=
\mu(\{x:x_1\ge r_n-Lh_n,\ \|x\|_2\le r_n\})
+
\mu(\{x:x_1\ge r_n-Lh_n,\ \|x\|_2\ge r_n\}).
\]

The first term is an inner cap of height \(Lh_n\) in \(r_nB_2^d\). Hence, by \eqref{eq:cap-volume} and \eqref{eq:density-inner-control},
\[
\mu(\{x:x_1\ge r_n-Lh_n,\ \|x\|_2\le r_n\})
\lesssim_L
f_\mu(r_n)(r_nh_n)^{\frac{d-1}{2}}h_n
=
f_\mu(r_n)a_n^{d-1}h_n.
\]
For the outer part, define
\[
B_{n,k}:=
\{x:x_1\ge r_n-Lh_n\}
\cap
\bigl(R_{t_n+k+1}(\mu)\setminus R_{t_n+k}(\mu)\bigr),
\quad k\ge0.
\]
Notice that 
\[
\{x: x_1\geq r_n-L h_n, \, \|x\|_2 \geq r_n\} \subseteq \bigcup_{k=0}^{\infty} B_{n,k}.
\]
For \(x\in B_{n,k}\),
\[
f_\mu(x)\le e^{-k}f_\mu(r_n).
\]
Moreover, by concavity of \(t\mapsto\rho_t\),
\[
\rho_{t_n+k+1}-(r_n-Lh_n)
\le
(L+k+1)h_n.
\]
For $0\le k\le K_n-1$, where $K_n:=\lfloor r_n/(4h_n)\rfloor$, the cap estimate gives
\[
\vol_d(B_{n,k})
\lesssim_L
(L+k+1)^{\frac{d+1}{2}}a_n^{d-1}h_n.
\]
Hence
\[
\sum_{k=0}^{K_n-1}\mu(B_{n,k})
\lesssim_L
f_\mu(r_n)a_n^{d-1}h_n
\sum_{k=0}^{\infty}e^{-k}(L+k+1)^{\frac{d+1}{2}}
\lesssim_L
f_\mu(r_n)a_n^{d-1}h_n.
\]
For \(k \geq K_n\), as in the proof of Proposition~\ref{prop:q-shell-comparison},
\[
\vol_d(B_{n,k}) \leq \vol_d(R_{t_{n}+k+1})
\lesssim
(k+1)^dh_n^d,
\]
and hence
\[
\sum_{k=K_n}^{\infty}\mu(B_{n,k})
\lesssim
f_\mu(r_n)h_n^d\sum_{k=K_n}^{\infty}e^{-k}(k+1)^d \lesssim f_\mu(r_n)a_n^{d-1}h_n.
\]
Combining the estimates proves the lemma.
\end{proof}

\section{The geometric construction}

In this section, we present the geometric construction of \cite{Bar-Vu}. The quantities $b_1,b_2$ that appear from now on, are positive constants, depending only on the dimension $d$, and can be chosen as large or as small we want respectively, in order for our arguments to hold true.

We construct the simplices on the sphere
\[
\partial R_{t_n}(\mu)=r_nS^{d-1}.
\]
Let \(y_1,\ldots,y_m\in r_nS^{d-1}\) form a maximal $2b_1a_n$-separated subset of $r_nS^{d-1}$; that is,
\[
\|y_i-y_j\|_2\ge 2b_1 a_n
\quad (i\neq j),
\]
where \(b_1>0\) is a sufficiently large constant. 

Let \(\sigma_{r_n}\) denote the \((d-1)\)-dimensional surface measure on the sphere
$r_nS^{d-1}$. For \(y\in r_nS^{d-1}\) and \(\rho>0\), write
\[
C(y,\rho):=r_nS^{d-1}\cap B(y,\rho)
\]
for the corresponding spherical cap of Euclidean radius \(\rho\).

Since the caps \(C(y_i,b_1a_n)\) are pairwise disjoint and the caps \(C(y_i,2b_1a_n)\) cover \(r_nS^{d-1}\), we have
\begin{equation}\label{eq:cap-packing-covering}
m\,\sigma_{r_n}\bigl(C(y_1,b_1a_n)\bigr)
\le
\sigma_{r_n}(r_nS^{d-1})
\le
m\,\sigma_{r_n}\bigl(C(y_1,2b_1a_n)\bigr).
\end{equation}
Using local coordinates on the sphere,
\begin{equation}\label{eq:small-cap-measure}
\sigma_{r_n}(C(y,\rho))\asymp \rho^{d-1}
\quad\text{uniformly whenever } \rho/r_n\to0.
\end{equation}
Since, by Lemma~\ref{lem:relations}, $a_n/r_n \to 0$, as $ n \to \infty \ $, applying \eqref{eq:small-cap-measure} with \(\rho=b_1a_n\) and \(\rho=2b_1a_n\), we get
\[
\sigma_{r_n}(C(y_1,b_1a_n))\asymp a_n^{d-1},
\quad
\sigma_{r_n}(C(y_1,2b_1a_n))\asymp a_n^{d-1}.
\]
Since also
\[
\sigma_{r_n}(r_nS^{d-1})=d\omega_d r_n^{d-1}\asymp r_n^{d-1},
\]
the inequalities in \eqref{eq:cap-packing-covering} imply
\[
m\,a_n^{d-1}\asymp r_n^{d-1}.
\]
Therefore
\begin{equation}\label{eq:m-estimate}
m\asymp \left(\frac{r_n}{a_n}\right)^{d-1}.
\end{equation}
Let \(u_i\in S^{d-1}\) be such that
\[
y_i=r_nu_i.
\]
Define
\begin{equation}\label{eq:def-yi0} 
y_i^0:=(r_n+h_n)u_i=(1+h_n/r_n)y_i\in \partial R_{t_n+1}(\mu),
\end{equation}
and let
\[
H_i:=\{x:\langle x,u_i\rangle=r_n\}
\]
be the tangent hyperplane to \(r_nS^{d-1}\) at \(y_i\).

Inside \(H_i\), choose a regular \((d-1)\)-simplex centered at \(y_i\), with vertices
\[
y_i^1,\ldots,y_i^d\in H_i\cap S(y_i,ca_n),
\]
where \(0<c<\sqrt{2}\) is a constant. In what follows, we summarize the main properties of the points $(y_i^j)_{1 \leq j \leq d}$.
\begin{proposition}\label{prop:main-properties} 
Let $1 \leq i \leq m$.
\begin{enumerate}
\item[{\rm (i)}] \[y_i=\frac{1}{d}\sum_{j=1}^d y_i^j.\]
\item[{\rm (ii)}] For every $1 \leq j \leq d$, \[y_i^j-y_i \perp y_i.\] In particular, for fixed $i$, we can write \[y_i^j=y_i+w_j=r_nu_i+w_j,\] where \[w_j=y_i^j-y_i \in y_i^\perp, \quad \|w_j\|_2=ca_n.\]
\item[{\rm (iii)}]For every $1 \leq j \leq d$: \[\|y_i^j\|_2^2=\|y_i\|_2^2+\|w_j\|_2^2=r_n^2+c^2a_n=r_n^2+c^2r_nh_n.\] 
Therefore, \[y_i^j \in (r_n+h_n)B_2^d \setminus r_nB_2^d.\]
\item[{\rm (iv)}] For every $1 \leq j,k \leq d$: \[\langle y_i^j,y_i^k-y_i\rangle=\langle y_i^j-y_i,y_i^k-y_i\rangle= \langle w_j,w_k \rangle= \begin{cases}
c^2a_n^2 &, \text{ if } \, k=j\\
-\dfrac{c^2a_n^2}{d-1} &, \text{ if } \, k \neq j
\end{cases}.\]
\item[{\rm (v)}] For every $1\leq j \neq k \leq d$:  \[\|y_i^j-y_i^k\|_2=ca_n\sqrt{\frac{2d}{d-1}}.\]
\item[{\rm (vi)}] \[u_i^\perp=y_i^\perp={\rm span}\{y_i^j-y_i:1 \leq j \leq d \}={\rm span}\{w_j:1 \leq j \leq d \}.\]
\item[{\rm (vii)}] The vectors $w_1,\ldots,w_d$ are vertices of a regular $(d-1)$-simplex centered at $0$. Moreover, \[\frac{ca_n}{d-1}B_{u_i^\perp} \subseteq \conv\{w_1,\ldots,w_d\}.\]

\end{enumerate}
    
\end{proposition}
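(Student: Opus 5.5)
The plan is to verify each item by elementary linear algebra, working throughout in the coordinates $y_i^j=y_i+w_j$ with $w_j\in u_i^\perp$. Barycentric coordinates will handle (vii). Fix $i$.

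\textbf{Items (i) and (ii).} Item (i) is the definition of a regular simplex \emph{centered} at $y_i$: its center is the centroid of its vertices, so $\sum_{j=1}^d w_j=0$. For (ii), both $y_i^j$ and $y_i$ lie in $H_i$. Hence
\[\langle y_i^j-y_i,u_i\rangle=r_n-r_n=0,\]
so $w_j\perp u_i$, equivalently $w_j\perp y_i$. Moreover $\|w_j\|_2=ca_n$ because $y_i^j\in S(y_i,ca_n)$.

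\textbf{Item (iii).} This is Pythagoras: $\|y_i^j\|_2^2=r_n^2+c^2a_n^2=r_n^2+c^2r_nh_n$. The statement has a typo here, writing $c^2a_n$ where $c^2a_n^2$ is meant. Since $0<c<\sqrt2$, we get
\[r_n^2<r_n^2+c^2r_nh_n<r_n^2+2r_nh_n+h_n^2=(r_n+h_n)^2,\]
which gives the stated membership $y_i^j\in (r_n+h_n)B_2^d\setminus r_nB_2^d$.

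\textbf{Items (iv) and (v).} Since $\langle y_i,w_k\rangle=0$, we have $\langle y_i^j,w_k\rangle=\langle w_j,w_k\rangle$. Regularity means all edge lengths $\|w_j-w_k\|_2$ are equal. Together with $\|w_j\|_2=ca_n$, the polarization identity
\[\langle w_j,w_k\rangle=\tfrac12\bigl(\|w_j\|_2^2+\|w_k\|_2^2-\|w_j-w_k\|_2^2\bigr)\]
shows that all off-diagonal inner products share a common value $\beta$. Pairing $\sum_k w_k=0$ with $w_j$ gives $c^2a_n^2+(d-1)\beta=0$, so $\beta=-c^2a_n^2/(d-1)$. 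Then (v) follows from
\[\|w_j-w_k\|_2^2=2c^2a_n^2\Bigl(1+\frac1{d-1}\Bigr)=c^2a_n^2\,\frac{2d}{d-1}.\]

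\textbf{Item (vi).} Every $w_j$ lies in $u_i^\perp$, which has dimension $d-1$, so it suffices to show that $w_1,\ldots,w_{d-1}$ are linearly independent. Their Gram matrix is
\[c^2a_n^2\Bigl(\frac{d}{d-1}I-\frac1{d-1}J\Bigr),\]
where $J$ is the $(d-1)\times(d-1)$ all-ones matrix. Its eigenvalues are $c^2a_n^2\,\frac{d}{d-1}$ on vectors orthogonal to $(1,\ldots,1)$ and $c^2a_n^2\,\frac{1}{d-1}$ on $(1,\ldots,1)$, all positive. Hence the Gram matrix is positive definite and the vectors are independent.

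\textbf{Item (vii).} By (vi), $w_1,\ldots,w_d$ are affinely independent in $u_i^\perp$, so every $x\in u_i^\perp$ has a unique representation $x=\sum_k\lambda_kw_k$ with $\sum_k\lambda_k=1$. Using (iv),
\[\langle x,w_j\rangle=c^2a_n^2\Bigl(\lambda_j-\frac{1-\lambda_j}{d-1}\Bigr)=\frac{c^2a_n^2}{d-1}\,(d\lambda_j-1).\]
It follows that $\lambda_j(x)\ge0$ if and only if $\langle x,w_j\rangle\ge -c^2a_n^2/(d-1)$. This describes the simplex inside $u_i^\perp$ as an intersection of $d$ half-spaces. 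Now take $x\in \frac{ca_n}{d-1}B_{u_i^\perp}$. Cauchy--Schwarz gives
\[\langle x,w_j\rangle\ge -\|x\|_2\,\|w_j\|_2\ge -\frac{ca_n}{d-1}\cdot ca_n=-\frac{c^2a_n^2}{d-1},\]
so all barycentric coordinates are nonnegative and $x\in\conv\{w_1,\ldots,w_d\}$.

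No step is a real obstacle. The only points needing care are the typo in (iii) and the barycentric characterization of the simplex used in (vii).
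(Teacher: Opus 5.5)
Your verification is correct. The paper gives no proof of this proposition: it lists (i)--(vii) as immediate consequences of the construction. Your write-up therefore supplies the omitted elementary details, in the same barycentric spirit the paper uses later. Your identity $\langle x,w_j\rangle=\frac{c^2a_n^2}{d-1}(d\lambda_j-1)$ is, after translating by $y_i$, the $\lambda_0=0$ case of the formula for $\lambda_j$ derived in the proof of Lemma~\ref{lem:distance-from-origin}.

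You are right that $c^2a_n$ in (iii) should read $c^2a_n^2$. The only point you leave implicit is the first sentence of (vii). It holds because the $w_j=y_i^j-y_i$ are the vertices of the original regular simplex translated by $-y_i$. Your affine-independence step is also immediate: $0=\frac1d\sum_j w_j$ lies in their affine hull, so that hull is the span $u_i^\perp$.
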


Define now
\[
\Delta_i:=\conv\{y_i^0,y_i^1,\ldots,y_i^d\}.
\]
It is clear by construction that $y_i^0,\ldots,y_i^d$ are affinely independent. Hence, $\Delta_i$ is a $d$-simplex. We denote by $\lambda(x)$ the barycentric vector of some $x \in \mathbb{R}^d$, with respect to $\Delta_i$, when $i$ is fixed.
For \(j=0,1,\ldots,d\), let also
\[
\Delta_i^j:=y_i^j+b_2(\Delta_i-y_i^j)=(1-b_2)y_i^j+b_2\Delta_i,
\]
where \(0<b_2<1/2\) is sufficiently small. Observe  that since by Lemma~\ref{lem:relations} \[\|y_i^0-y_i\|_2=h_n \lesssim a_n,\] using also Proposition~\ref{prop:main-properties} ${\rm (v)}$, we obtain \begin{equation}\label{eq:simplices-diam}
\diam(\Delta_i) \asymp a_n, \ \diam( \Delta_i^j) \asymp b_2a_n, \quad 0\leq j \leq d.
\end{equation}
Moreover, \begin{equation}\label{eq:simplices-volume-asymptotics}
\vol_d(\Delta_i)\asymp a_n^{d-1}h_n, \ \vol_d(\Delta_i^j)\asymp b_2 a_n^{d-1}h_n , \quad 0\leq j \leq d.
\end{equation}

\begin{lemma}\label{lem:simplex-shell}
For every $i=1,\ldots,m$ and every $j=0,\ldots, d$ we have that
\[
\Delta_i^j \subseteq\Delta_i\subseteq R_{t_n+1}(\mu)\setminus {\rm int}(R_{t_n}(\mu)).
\]
\end{lemma}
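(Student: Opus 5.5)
The first inclusion $\Delta_i^j\subseteq\Delta_i$ is immediate from convexity. Since $y_i^j\in\Delta_i$ and $\Delta_i$ is convex, the set $(1-b_2)y_i^j+b_2\Delta_i$ consists of convex combinations of points of $\Delta_i$, and hence lies in $\Delta_i$. Because $R_{t_n+1}(\mu)=(r_n+h_n)B_2^d$ and ${\rm int}(R_{t_n}(\mu))=\{\|x\|_2<r_n\}$, it remains to show that every $x\in\Delta_i$ satisfies $r_n\le\|x\|_2\le r_n+h_n$. I would treat the two bounds separately.

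For the inner bound, I would use the linear functional $\langle\cdot,u_i\rangle$. By Proposition~\ref{prop:main-properties} (ii), $\langle y_i^j,u_i\rangle=r_n$ for $1\le j\le d$, and $\langle y_i^0,u_i\rangle=r_n+h_n$. Hence every vertex of $\Delta_i$ satisfies $\langle\cdot,u_i\rangle\ge r_n$, and so does every point of $\Delta_i$. By Cauchy--Schwarz, $\|x\|_2\ge\langle x,u_i\rangle\ge r_n$. Thus $\Delta_i$ avoids the open ball $r_nB_2^d$.

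For the outer bound, I would use that the Euclidean ball $(r_n+h_n)B_2^d$ is convex. It therefore suffices to check that all vertices lie in it. The vertex $y_i^0$ has norm exactly $r_n+h_n$. For $j\ge1$, Proposition~\ref{prop:main-properties} (iii) gives
\[
\|y_i^j\|_2^2=r_n^2+c^2a_n^2=r_n^2+c^2r_nh_n .
\]
Since $c<\sqrt2$, this is at most $r_n^2+2r_nh_n<(r_n+h_n)^2$. This is precisely where the restriction $0<c<\sqrt2$ on the side length is used. Convexity then yields $\Delta_i\subseteq(r_n+h_n)B_2^d$.

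There is no genuine obstacle here. The only point requiring care is to read item (iii) correctly as $\|w_j\|_2^2=c^2a_n^2=c^2r_nh_n$, and to observe that the vertex bound, not a bound on interior points, is what matters, since the outer region is convex while the inner constraint is handled by the half-space $\{\langle x,u_i\rangle\ge r_n\}$ rather than by the nonconvex shell itself.
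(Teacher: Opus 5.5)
Your proof is correct and follows the paper's argument. The outer bound uses the vertex norms $\|y_i^j\|_2^2=r_n^2+c^2r_nh_n<(r_n+h_n)^2$ (since $c<\sqrt2$) together with convexity of the ball, and the inner bound uses $\langle x,u_i\rangle\ge r_n$ on $\Delta_i$; the paper writes the latter as the orthogonal decomposition $x=(r_n+\lambda_0(x)h_n)u_i+\sum_{j\ge1}\lambda_j(x)w_j$, which is the same observation as your half-space-plus-Cauchy--Schwarz step.
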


\begin{proof}
It is enough to show the lemma for $\Delta_i$, as it is obvious that $\Delta_i^j \subseteq \Delta_i$. Recall that \[\Delta_i=\conv\{y_i^0,\ldots,y_i^d\}.\] Considering \eqref{eq:def-yi0} and Proposition~\ref{prop:main-properties} $\rm{(iii)}$, if $x$ is a vertex of $\Delta_i$, we can easily deduce that \[\|x\|_2 \leq r_n+h_n,\] as $c<\sqrt{2}$. Hence, by convexity $\Delta_i \subseteq R_{t_n+1}(\mu)$. Let now $x \in \Delta_i$ and write \[x=\sum_{j=0}^{d} \lambda_j(x) y_i^j,\] as a convex combination. Using again \eqref{eq:def-yi0} and Proposition~\ref{prop:main-properties} $\rm{(ii)}$, we get that \[x=(r_n+\lambda_0(x)h_n)u_i+\sum_{j=1}^d \lambda_j(x)w_j,\] where $w_j \perp u_i$ for every $1 \leq j \leq d$. Therefore, \[\|x\|_2^2=(r_n+\lambda_0(x)h_n)^2+\biggl\|\sum_{j=1}^d \lambda_j(x)w_j\biggr\|_2^2 \geq r_n^2.\] 
\end{proof}

\begin{lemma}\label{lem:simplex-mass}
One has
\[
\mu(\Delta_i)\asymp f_\mu(r_n)a_n^{d-1}h_n
\quad\text{and}\quad 
\mu(\Delta_i^j)\asymp f_\mu(r_n)a_n^{d-1}h_n ,
\quad j=0,\ldots,d.
\]
\end{lemma}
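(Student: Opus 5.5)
The plan is to sandwich the density on the simplices between constant multiples of $f_\mu(r_n)$, and then multiply by the volume estimates \eqref{eq:simplices-volume-asymptotics}.

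\emph{Density bounds.} By Lemma~\ref{lem:simplex-shell}, every $x\in\Delta_i$ (hence every $x\in\Delta_i^j$) lies in $R_{t_n+1}(\mu)\setminus{\rm int}(R_{t_n}(\mu))$, so $r_n\le\|x\|_2\le r_n+h_n=\rho_{t_n+1}$. Since $f_\mu$ is radial and decreasing in $\|x\|_2$, and $f_\mu(\rho_t)=e^{-t}f_\mu(0)$, this gives
\[
e^{-1}f_\mu(r_n)=f_\mu(\rho_{t_n+1})\le f_\mu(x)\le f_\mu(\rho_{t_n})=f_\mu(r_n).
\]
Integrating over $\Delta_i$ and over $\Delta_i^j$ then yields
\[
\mu(\Delta_i)\asymp f_\mu(r_n)\vol_d(\Delta_i), \qquad \mu(\Delta_i^j)\asymp f_\mu(r_n)\vol_d(\Delta_i^j).
\]

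\emph{Volume estimates.} It remains to justify \eqref{eq:simplices-volume-asymptotics}, and I would verify it directly. The base $\conv\{y_i^1,\ldots,y_i^d\}$ is a regular $(d-1)$-simplex in $H_i$ with circumradius $ca_n$. Its $(d-1)$-volume is therefore $\asymp a_n^{d-1}$, with constants depending only on $d$ once $c$ is fixed.

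The apex $y_i^0=(r_n+h_n)u_i$ lies at distance exactly $h_n$ from $H_i$. Hence
\[
\vol_d(\Delta_i)=\frac{1}{d}\,h_n\,\vol_{d-1}\bigl(\conv\{y_i^1,\ldots,y_i^d\}\bigr)\asymp a_n^{d-1}h_n.
\]
Each $\Delta_i^j$ is a homothetic copy of $\Delta_i$ with ratio $b_2$. So in fact $\vol_d(\Delta_i^j)=b_2^d\vol_d(\Delta_i)$, which is the correct scaling in place of the factor $b_2$ written in \eqref{eq:simplices-volume-asymptotics}.

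Because $b_2$ depends only on $d$, the factor $b_2^d$ is absorbed into the implicit constants, and the second claim follows. There is no real obstacle here. The only care needed is to check that all constants ($c$, $b_2$, $e^{-1}$, and the volume of the regular simplex) depend only on $d$, so that the $\asymp$ is uniform in $n$ and $i$. Rotational invariance ensures that nothing depends on $i$.
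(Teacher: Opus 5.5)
Your proof is correct and follows the paper's argument: Lemma~\ref{lem:simplex-shell} sandwiches $f_\mu$ on $\Delta_i$ between $e^{-1}f_\mu(r_n)$ and $f_\mu(r_n)$, and you then multiply by the volume estimate \eqref{eq:simplices-volume-asymptotics}. Your direct check of that estimate (a pyramid of height $h_n$ over a regular $(d-1)$-simplex of circumradius $ca_n$) fills in a step the paper only asserts. You are also right that the homothety gives $\vol_d(\Delta_i^j)=b_2^d\vol_d(\Delta_i)$ rather than a factor $b_2$; this is harmless, since $b_2$ depends only on $d$.
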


\begin{proof}
By Lemma~\ref{lem:simplex-shell},
\[
\Delta_i\subseteq R_{t_n+1}(\mu)\setminus {\rm int}(R_{t_n}(\mu)).
\]
Therefore, for every \(x\in\Delta_i\),
\begin{equation}\label{eq:simplices-f-asymptotics}
e^{-1}f_\mu(r_n)\le f_\mu(x)\le f_\mu(r_n),
\end{equation}
so
\[
f_\mu(x)\asymp f_\mu(r_n).
\]
Since by \eqref{eq:simplices-volume-asymptotics}
\[
\vol_d(\Delta_i)\asymp a_n^{d-1}h_n,
\]
we get
\[
\mu(\Delta_i)
=
\int_{\Delta_i}f_\mu(x)\,dx
\asymp
f_\mu(r_n)a_n^{d-1}h_n
.
\]
The same proof works for \(\Delta_i^j\), since \(\Delta_i^j\subseteq \Delta_i\) and its volume is a fixed positive multiple of \(\vol_d(\Delta_i)\).
\end{proof}

We now define the relevant half-spaces. Let
\[
H_i^+:=\{x:\langle x,u_i\rangle\ge r_n\}.
\]
Observe that $\Delta_i \subseteq H_i^+$. Let also, for $1 \leq j \leq d$, $H_i^j$ be the half-space containing $\Delta_i^k$ for all $k=1,\ldots,d, \ k \neq j$, not containing $\Delta_i^0$ and $\Delta_i^j$, and whose bounding hyperplane touches $\Delta_i^k, \ k=0,\ldots,d, \ k \neq j$. 

Notice that \begin{equation}\label{eq:main-half-space-barycentric}H_i^+=\{x: \lambda_0(x) \geq 0\}.
\end{equation}We will express the other half-spaces in this way, as well.

\begin{lemma}\label{lem:half-spaces-barycentric}
    For every $1 \leq j \leq d$ we have that  \[H_i^j=\{x: s\lambda_0(x)+\lambda_j(x) \leq b_2\}, \quad s=\frac{b_2}{1-b_2}.\]
\end{lemma}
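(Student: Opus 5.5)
Since each barycentric coordinate $\lambda_k$ is an affine function on $\mathbb{R}^d$, every closed half-space can be written as $\{x:\sum_k \alpha_k\lambda_k(x)\le \beta\}$. The plan is therefore to identify $H_i^j$ by its behaviour on vertices. Recall that $H_i^j$ is defined by a hyperplane touching the $d$ small simplices $\Delta_i^k$, $k\neq j$, with all $\Delta_i^k$, $k\ne 0,j$, on one side and $\Delta_i^0$, $\Delta_i^j$ on the other side. Because $\Delta_i^k$ touches the hyperplane, it lies in the closed half-space it is assigned to. This forces $\Delta_i^0$ to lie on both the hyperplane and the excluded side, and hence to touch it from outside.

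The first step is to compute barycentric coordinates of the vertices of $\Delta_i^k=(1-b_2)y_i^k+b_2\Delta_i$. The vertices are $(1-b_2)y_i^k+b_2y_i^l$, with barycentric vector $(1-b_2)e_k+b_2e_l$. In particular, $\lambda_k=1$ at $y_i^k$, and $\lambda_k=1-b_2$, $\lambda_l=b_2$ at the other vertices.

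The second step is to check the candidate $g(x)=s\lambda_0(x)+\lambda_j(x)$ with $s=b_2/(1-b_2)$. On the vertices of $\Delta_i^k$ with $k\notin\{0,j\}$, the function $g$ takes the values $0$, $b_2 s$ (when $l=0$) and $b_2$ (when $l=j$). All of these are at most $b_2$, since $s<1$. So $\Delta_i^k\subseteq\{g\le b_2\}$, with equality attained at the vertex $(1-b_2)y_i^k+b_2y_i^j$; hence $\Delta_i^k$ touches the hyperplane $\{g=b_2\}$.

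On $\Delta_i^0$ the vertices give $g=s$ at $y_i^0$, $g=(1-b_2)s=b_2$ at $(1-b_2)y_i^0+b_2y_i^l$ for $l\notin\{0,j\}$, and $g=(1-b_2)s+b_2=2b_2$ when $l=j$. So $\min_{\Delta_i^0}g=b_2$. This shows that $\Delta_i^0$ touches the hyperplane, and its interior lies in $\{g>b_2\}$, which is the non-containing side. On $\Delta_i^j$ the vertices give $g=1$, $g=1-b_2$, and $g=1-b_2+b_2s=1-b_2^2$. All of these exceed $b_2$ because $b_2<1/2$, so $\Delta_i^j$ lies strictly outside. Together these checks show that $\{g\le b_2\}$ satisfies every defining property of $H_i^j$.

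The last step is uniqueness, which is where the real content lies. A hyperplane is determined by $d$ affinely independent points on it. The touching points $(1-b_2)y_i^k+b_2y_i^j$ for $k\notin\{0,j\}$ give $d-1$ such points, and the $\Delta_i^0$ vertices with $l\ne 0,j$ give more. I would show that the touching points are affinely independent and span a hyperplane: they include the $(d-1)$ points above together with $(1-b_2)y_i^0+b_2y_i^l$, and their barycentric vectors are visibly independent. Hence any hyperplane satisfying the tangency and separation requirements must contain these points, since they are the unique minimisers or maximisers of the separating functional on each small simplex. I would argue this by noting that a supporting hyperplane of $\Delta_i^k$ that keeps $\Delta_i^0$ and $\Delta_i^j$ on the other side must pass through the vertex of $\Delta_i^k$ nearest to them. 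If the paper's definition is read as specifying $H_i^j$ only up to this natural choice, the verification in the second step already suffices.
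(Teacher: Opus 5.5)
Your verification step is the paper's proof. The paper writes $\lambda(x)=(1-b_2)e_k+b_2\tau$ with $\tau\in\Lambda_d$ for $x\in\Delta_i^k$ and checks three things:
\begin{enumerate}
\item[(a)] $s\lambda_0+\lambda_j\le b_2$ on $\Delta_i^k$ for $k\notin\{0,j\}$, with equality at $\tau=e_j$;
\item[(b)] $s\lambda_0+\lambda_j\ge b_2$ on $\Delta_i^0$, with equality at $\tau=e_l$, $l\notin\{0,j\}$;
\item[(c)] $s\lambda_0+\lambda_j\ge 1-b_2>b_2$ on $\Delta_i^j$.
\end{enumerate}
Evaluating at vertices, as you do, is equivalent because the function is affine. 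The paper stops there and never shows that the defining properties single out one half-space. One small slip: $1-b_2+b_2s=1-b_2+b_2^2/(1-b_2)$, not $1-b_2^2$. It is still at least $1-b_2>b_2$, so nothing changes.

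Your uniqueness paragraph is not yet an argument. Knowing $d$ affinely independent contact points would determine the hyperplane, but you never show that an arbitrary admissible hyperplane passes through these particular points. There are three problems:
\begin{enumerate}
\item[(i)] Calling them ``the unique minimisers or maximisers of the separating functional'' presupposes that the functional is $s\lambda_0+\lambda_j$, which is what you want to prove.
\item[(ii)] For $d\ge 3$ the minimisers on $\Delta_i^0$ are not unique anyway. They form the face spanned by $(1-b_2)y_i^0+b_2y_i^l$, $l\notin\{0,j\}$.
\item[(iii)] The ``vertex nearest to them'' heuristic does not pick out a vertex. The vertex of $\Delta_i^k$ facing $\Delta_i^0$ is $(1-b_2)y_i^k+b_2y_i^0$, whereas the actual contact vertex is $(1-b_2)y_i^k+b_2y_i^j$.
\end{enumerate}

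If you want uniqueness, read ``touches'' as ``supports'' and parametrize all candidates.
\begin{enumerate}
\item[1.] Write $H=\{\sum_{m=0}^d c_m\lambda_m\le\beta\}$. Use $\max_{\Delta_i^k}\sum_m c_m\lambda_m=(1-b_2)c_k+b_2\max_m c_m$ and $\min_{\Delta_i^k}\sum_m c_m\lambda_m=(1-b_2)c_k+b_2\min_m c_m$.
\item[2.] Containment with contact for every $k\notin\{0,j\}$ forces these $c_k$ to be equal. Normalize them to $0$, so that $\beta=b_2\max(c_0,c_j,0)$.
\item[3.] The condition $\Delta_i^j\not\subseteq H$ forces $c_j>0$.
\item[4.] $\Delta_i^0$ cannot be supported from inside, since $\Delta_i^0\not\subseteq H$. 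Support from outside reads $(1-b_2)c_0+b_2\min(c_0,0)=b_2\max(c_0,c_j)$.
\item[5.] If $c_0<0$, the left side equals $c_0<0$ while the right side is positive. If $c_0\ge c_j$, one gets $(1-2b_2)c_0=0$, which is impossible.
\item[6.] Hence $c_0=sc_j$. Scaling to $c_j=1$ gives $\beta=b_2$, which is exactly the half-space of the lemma.
\end{enumerate}

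The supporting reading is essential. If $\Delta_i^0$ were only required to meet the hyperplane, then $c_0=\varepsilon\in(0,s)$, $c_j=1$, $\beta=b_2$ would also be admissible, and uniqueness would fail.
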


\begin{proof}
Fix $1 \leq j \leq d$. If $x \in 
\Delta_i^k$ for some $0 \leq k \leq d$, we have that \[\lambda(x)=(1-b_2)e_k+b_2\tau, \quad \tau \in \Lambda_d,\] where $\{e_0,\ldots,e_d
\}$ is the usual orthonormal basis of $\mathbb{R}^{d+1}$. If $k \in \{1,\ldots,d\} \setminus\{j\}$, then  \[s\lambda_0(x)+\lambda_j(x)=b_2(s\tau_0+\tau_j)\leq b_2,\] with equality for $\tau=e_j.$ If $x \in \Delta_i^0$, then \[s\lambda_0(x)+\lambda_j(x)=s(1-b_2)+b_2(\tau_0+\tau_j)=b_2(1+\tau_0+\tau_j) \geq b_2,\] with equality for $\tau=e_k, \, k \in\{1,\ldots,d\} \setminus \{j\}$. Now, if $x \in \Delta_i^j$, we have that \[s\lambda_0(x)+\lambda_j(x)=sb_2\tau_0+(1-b_2)+b_2\tau_j \geq 1-b_2>b_2.\]
\end{proof}

We need to upper bound the measure of all these half-spaces. We begin with the easy case, namely $\mu(H_i^+)$:

\begin{lemma}\label{lem:H-plus-upper-bound}
   There exists a constant $C(d)>0$ such that for every $i=1,\ldots,m$ \[\mu(H_i^+) \leq \frac{C(d)}{n}\] 
\end{lemma}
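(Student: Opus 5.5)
The bound follows directly from the estimates already proved. Since $u_i\in S^{d-1}$, the half-space $H_i^+=\{x:\langle x,u_i\rangle\ge r_n\}$ is at distance $r_n$ from the origin and does not contain it. By rotational invariance its measure is independent of $i$, and
\[
\mu(H_i^+)=q_\mu(r_n).
\]
We then apply Proposition~\ref{prop:q-shell-comparison}, which gives $q_\mu(r_n)\asymp f_\mu(r_n)a_n^{d-1}h_n$ with implicit constants depending only on $d$. By the choice of $t_n$ in Lemma~\ref{lem:choice-tn},
\[
f_\mu(r_n)a_n^{d-1}h_n=\frac{c_{\mu,d}}{n}\le\frac1n .
\]
Combining the two yields $\mu(H_i^+)\le C(d)/n$, where $C(d)$ is the upper constant in Proposition~\ref{prop:q-shell-comparison}. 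This is exactly the right-hand inequality of \eqref{eq:q-one-over-n}.

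The only step requiring care is that the constant in the upper bound of Proposition~\ref{prop:q-shell-comparison} depends only on $d$ and not on $\mu$. Rereading that proof confirms this. The upper bound uses only three ingredients:
\begin{enumerate}
\item the cap-volume estimate \eqref{eq:cap-volume};
\item the decay $f_\mu\le e^{-k}f_\mu(r_n)$ on the $k$-th shell;
\item the summability of $e^{-k}(k+1)^{(d+1)/2}$ and of the tail $e^{-K_n/2}(h_n/r_n)^{(d-1)/2}$, the latter controlled through Lemma~\ref{lem:relations}, whose constants depend only on $d$.
\end{enumerate}
One further point: the cap estimate \eqref{eq:cap-volume} needs $h_n\le r_n/2$, which holds for $n$ beyond a threshold depending only on $d$ by Lemma~\ref{lem:relations}. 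For the finitely many smaller $n$, the trivial bound $\mu(H_i^+)\le 1$ suffices after enlarging $C(d)$. This works because that threshold is governed by $\log n$ alone, through $h_n/r_n\lesssim 1/\log n$.
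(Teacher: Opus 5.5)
Your proposal is correct and is the paper's argument: rotational invariance gives $\mu(H_i^+)=q_\mu(r_n)$, and the upper half of \eqref{eq:q-one-over-n}, which is Proposition~\ref{prop:q-shell-comparison} combined with Lemma~\ref{lem:choice-tn}, gives $q_\mu(r_n)\le C(d)/n$. Your separate treatment of small $n$ is harmless but not needed. In the upper bound the cap estimate is only applied to caps of height at most $r_n/4$, and $h_n\le r_n$ holds for every $n$ by Lemma~\ref{lem:rho-increments} with $t_n\ge 1$, so that bound already holds with a $d$-dependent constant for all $n$.
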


\begin{proof}
    Since $u_i \in S^{d-1}$, by the rotational invariance we have that \[\mu(H_i^+)=\mu(\{x: \langle x, e_1 \rangle \geq r_n\})=q_\mu(r_n)\] and the claim follows from \eqref{eq:q-one-over-n}.
\end{proof}

We proceed with $\mu(H_i^j)$. 

\begin{lemma}\label{lem:distance-from-origin}
    If $b_2>0$ is sufficiently small, then for every $j=1,\ldots,d$, we have that \[\dist(0,H_i^j) \geq r_n-L(d,b_2,c)h_n,\] where $L(d,b_2,c)>0$.
\end{lemma}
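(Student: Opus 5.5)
We would compute the distance directly from the barycentric description of Lemma~\ref{lem:half-spaces-barycentric}. The map $x\mapsto s\lambda_0(x)+\lambda_j(x)$ is affine, so $H_i^j=\{x:\langle x,v\rangle\le \beta\}$ for some vector $v$ and scalar $\beta$. We would write it explicitly using the decomposition $x=(r_n+\lambda_0 h_n)u_i+\sum_{k\ge1}\lambda_k w_k$ from the proof of Lemma~\ref{lem:simplex-shell}. Thus $\lambda_0(x)=(\langle x,u_i\rangle-r_n)/h_n$.

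For $\lambda_j$, take the component $x'$ of $x$ in $u_i^\perp$. Then $x'=\sum_{k\ge1}\lambda_k w_k$ with $\sum_{k\ge1}\lambda_k=1-\lambda_0$. By Proposition~\ref{prop:main-properties}~(iv), $\langle x',w_j\rangle=c^2a_n^2\bigl(\lambda_j-\frac{1-\lambda_0-\lambda_j}{d-1}\bigr)$. Solving gives
\[
\lambda_j=\frac{1-\lambda_0}{d}+\frac{d-1}{d}\,\frac{\langle x,w_j\rangle}{c^2a_n^2}.
\]
Substituting both expressions, the defining inequality $s\lambda_0+\lambda_j\le b_2$ becomes
\[
\Bigl(s-\tfrac1d\Bigr)\frac{\langle x,u_i\rangle-r_n}{h_n}+\frac{d-1}{d}\,\frac{\langle x,w_j\rangle}{c^2a_n^2}\le b_2-\frac1d .
\]

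We take $b_2$ small enough that $s<1/d$, and also $b_2<1/d$. Multiplying by $h_n/(1/d-s)>0$ puts the half-space in the form $\langle x,u_i\rangle+\gamma\langle x,w_j\rangle\ge r_n+\kappa h_n$, where
\[
\gamma=\frac{(d-1)h_n}{d(1/d-s)c^2a_n^2},\qquad \kappa=-\frac{1/d-b_2}{1/d-s}.
\]
The constant $\kappa$ is negative and depends only on $d$ and $b_2$. Since $0$ does not satisfy the inequality, $0\notin H_i^j$ whenever $r_n+\kappa h_n>0$, and then
\[
\dist(0,H_i^j)=\frac{r_n+\kappa h_n}{\|u_i+\gamma w_j\|_2}=\frac{r_n+\kappa h_n}{\sqrt{1+\gamma^2c^2a_n^2}}.
\]

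The remaining step is to bound the denominator. Since $\gamma ca_n\asymp_{d,b_2,c} h_n/a_n$ and $a_n^2=r_nh_n$, we have $\gamma^2c^2a_n^2\asymp h_n/r_n$. Using $\sqrt{1+\epsilon}\le1+\epsilon/2$, the distance is at least $(r_n+\kappa h_n)\bigl(1-C'h_n/r_n\bigr)\ge r_n-(|\kappa|+C')h_n$. This holds with $L=|\kappa|+C'$, a constant depending on $d$, $b_2$ and $c$.

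The main obstacle is sign bookkeeping. We must confirm that the origin lies on the non-$H_i^j$ side, and that the direction coefficient $s-1/d$ is negative, which is exactly why $b_2$ must be small. For the finitely many $n$ with $r_n+\kappa h_n\le0$ the inequality is trivial, since its right-hand side is then nonpositive.
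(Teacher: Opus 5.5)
Your proof follows the paper's own argument. It uses the same formulas for $\lambda_0,\lambda_j$ in terms of $\langle x,u_i\rangle$ and $\langle x,w_j\rangle$, computes the exact distance to the bounding hyperplane of $\{s\lambda_0+\lambda_j\le b_2\}$, and finishes with $(1+x)^{-1/2}\ge 1-x/2$ and $a_n^2=r_nh_n$. However, the rearrangement step contains a sign error.

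Multiplying your displayed inequality by $h_n/(1/d-s)>0$ gives $-(\langle x,u_i\rangle-r_n)+\gamma\langle x,w_j\rangle\le \kappa h_n$, that is,
\[
H_i^j=\Bigl\{x:\ \langle x,u_i\rangle-\gamma\langle x,w_j\rangle\ge r_n-\kappa h_n=r_n+\frac{1-db_2}{1-ds}\,h_n\Bigr\}.
\]
Hence
\[
\dist(0,H_i^j)=\frac{r_n+|\kappa|\,h_n}{\sqrt{1+\gamma^2c^2a_n^2}},
\]
which is exactly the paper's expression $\frac{(1-ds)r_n+(1-db_2)h_n}{dh_n\|\nabla\psi_j\|_2}$ with $\psi_j=s\lambda_0+\lambda_j$. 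As a sanity check, take $d=2$ and let $b_2\to0$. The bounding line then tends to the line through $y_i^0$ and the other base vertex, whose distance from the origin is $(r_n+h_n)/\sqrt{1+h_n^2/(c^2a_n^2)}$, confirming the plus sign.

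So your equality with numerator $r_n+\kappa h_n$, $\kappa<0$, is false as stated. Because it underestimates the true distance, your final inequality $\dist(0,H_i^j)\ge r_n-(|\kappa|+C')h_n$ is still valid, and the lemma is not endangered. With the correct sign, though, the origin lies outside $H_i^j$ for every $n$ as soon as $ds<1$ and $db_2<1$. Your separate treatment of the $n$ with $r_n+\kappa h_n\le 0$ then becomes unnecessary. Dropping the positive term $|\kappa|h_n$ also gives the cleaner constant $L=\frac{(d-1)^2}{2(1-ds)^2c^2}$, as in the paper.
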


\begin{proof}
    Set \[\psi_j(x)=s\lambda_0(x)+\lambda_j(x), \quad x \in \mathbb{R}^d\] which is an affine function. Therefore, \[\psi_j(x)=\langle\nabla\psi_j,x \rangle+t\] for some $t \in \mathbb{R}$, where \[t=\psi_j(0)=s\lambda_0(0)+\lambda_j(0).\] With this notation, \begin{equation}\label{eq:distance-formula}\dist(0, H_i^j)=\frac{|b_2-\psi_j(0)|}{\|\nabla \psi_j\|_2}.
    \end{equation}
    Let $x \in \mathbb{R}^d$ and write \[x=\sum_{k=0}^d \lambda_k(x) y_i^k.\] If we take inner product with $u_i$, we get \[\lambda_0(x)=\frac{1}{h_n}(\langle x, u_i \rangle-r_n).
    \] In particular, \begin{equation}\label{eq:distance-first-coordinate}
    \lambda_0(0)=-\frac{r_n}{h_n}, \quad \nabla\lambda_0=\frac{1}{h_n}u_i.
    \end{equation} Taking again inner product with $w_j$ and using Proposition~\ref{prop:main-properties} ${\rm (iv)}$, we obtain \[\lambda_j(x)=\frac{1-\lambda_0(x)}{d}+\frac{d-1}{dc^2a_n^2}\langle x, w_j\rangle.\] More precisely, \begin{equation}\label{eq:distance-second-coordinate}
    \lambda_j(0)=\frac{r_n+h_n}{dh_n},\quad \nabla\lambda_j=-\frac{1}{dh_n}u_i+\frac{d-1}{dc^2a_n^2}w_j.
    \end{equation} Therefore, if $0<b_2<1/(d+1)$, we can easily check that $ds<1$ and $b_2d<1$. Hence, \begin{equation}\label{eq:sign-of-absolute}
        \psi_j(0)-b_2=\frac{(1-ds)r_n+(1-b_2d)h_n}{dh_n}>0,
    \end{equation} which implies that  $0\notin H_i^j$. Inserting \eqref{eq:distance-first-coordinate}, \eqref{eq:distance-second-coordinate} and \eqref{eq:sign-of-absolute} into \eqref{eq:distance-formula} yields \begin{align*}\dist(0, H_i^j)&=\frac{\frac{(1-ds)r_n+(1-b_2d)h_n}{dh_n}}{\sqrt{\left(s-\frac{1}{d}\right)^2 \frac{1}{h_n^2}+\frac{(d-1)^2}{d^2c^2a_n^2}}} \geq \\
    &\geq \frac{r_n}{\sqrt{1+\frac{(d-1)^2h_n^2}{(1-sd)^2c^2a_n^2}}}\geq\\
    & = r_n \left(1+\frac{(d-1)^2}{(1-sd)^2c^2}\frac{h_n}{r_n}\right)^{-1/2},
    \end{align*} where we used that $a_n^2=h_nr_n$. Applying the inequality \[(1+x)^{-1/2} \geq 1-\frac{x}{2}, \quad x \geq 0,\] concludes the proof.
\end{proof}

We are now able to upper bound the measure of $H_i^j$.

\begin{lemma}\label{lem:Hij-measure-upper-bound}
    There exists a constant \(C(d)>0\) and some $n_0:=n_0(\mu,d) \in \mathbb{N}$  such that for every $n \geq n_0$, every $i=1,\ldots,m$ and every $1 \leq j \leq d$
\[
\mu(H_i^j) \le \frac{C(d)}{n}.
\]
\end{lemma}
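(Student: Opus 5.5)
By Lemma~\ref{lem:distance-from-origin}, once $b_2$ is fixed small enough (depending only on $d$), the half-space $H_i^j$ does not contain the origin, and its distance from $0$ is at least $r_n-Lh_n$. Here $L=L(d,b_2,c)$ depends only on $d$, because $b_2$ and $c$ are fixed dimensional constants. The plan is therefore to compare $H_i^j$ with a half-space at distance exactly $r_n-Lh_n$ and to control the measure of the latter using Lemma~\ref{lem:inward-q} together with the normalization from Lemma~\ref{lem:choice-tn}.

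\textbf{Step 1 (reduction to $q_\mu$).} Let $\delta:=\dist(0,H_i^j)$ and let $v$ be the unit outer normal of the bounding hyperplane, so that
\[
H_i^j=\{x:\langle x,v\rangle\ge \delta\}.
\]
Since $\delta\ge r_n-Lh_n$, we have $H_i^j\subseteq\{x:\langle x,v\rangle\ge r_n-Lh_n\}$. By rotational invariance and monotonicity of $q_\mu$, this gives $\mu(H_i^j)\le q_\mu(r_n-Lh_n)$. We need $r_n-Lh_n>0$, which holds for $n$ large by Lemma~\ref{lem:relations}.

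\textbf{Step 2 (inward shell estimate).} Apply Lemma~\ref{lem:inward-q} with this fixed $L$. For $n\ge n_0(\mu,L)=n_0(\mu,d)$ it yields
\[
q_\mu(r_n-Lh_n)\lesssim_L f_\mu(r_n)a_n^{d-1}h_n .
\]
Since $L$ depends only on $d$, the implicit constant depends only on $d$.

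\textbf{Step 3 (normalization).} By Lemma~\ref{lem:choice-tn}, $f_\mu(r_n)a_n^{d-1}h_n\le 1/n$. Hence $\mu(H_i^j)\le C(d)/n$ uniformly in $i$ and $j$. Uniformity in $i,j$ is automatic, because the bound in Lemma~\ref{lem:distance-from-origin} does not depend on them.

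The only delicate point is bookkeeping. One must make sure that $b_2$ is chosen once and for all as a function of $d$ (for instance $b_2<1/(d+1)$, as in Lemma~\ref{lem:distance-from-origin}), so that $L$, and hence both $n_0$ and the constant in Lemma~\ref{lem:inward-q}, depend only on $d$ and $\mu$ and not on $n$. One must also choose $n_0$ large enough that $r_n-Lh_n>0$ and that the hypotheses $t_n>L$ and $r_n/h_n>L$ of Lemma~\ref{lem:inward-q} hold.
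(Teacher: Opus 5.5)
Your proof is correct and follows the paper's argument. Rotational invariance together with Lemma~\ref{lem:distance-from-origin} gives $\mu(H_i^j)=q_\mu(\dist(0,H_i^j))\le q_\mu(r_n-Lh_n)$, Lemma~\ref{lem:inward-q} then finishes, and your Step~3 (using Lemma~\ref{lem:choice-tn} to turn $f_\mu(r_n)a_n^{d-1}h_n$ into $1/n$) just makes explicit a step the paper leaves implicit.
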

\begin{proof}
By the rotational invariance of $\mu$ we have that \[\mu(H_i^j)=q_\mu(\mathrm{dist}(0,H_i^j)) \leq q_\mu(r_n-L(d,b_2,c)h_n),\] where we used Lemma~\ref{lem:distance-from-origin} and the monotonicity of $q$. Now Lemma~\ref{lem:inward-q} completes the proof, as $L$ is independent of $i,j$ and $n$ and $b_2$ depends only on $d$.
\end{proof}

Assume now that $z_i^j$ is an arbitrary point of $\Delta_i^j$ for every $j=0,\ldots,d$.

\begin{lemma}\label{lem:z-affine-independence}
    The vectors $z_i^0,\ldots,z_i^d$ are affinely independent.
\end{lemma}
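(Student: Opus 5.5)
Since each $z_i^j$ lies in $\Delta_i^j\subseteq\Delta_i$, I will pass to barycentric coordinates with respect to $\Delta_i$ and reduce the claim to a statement about a $(d+1)\times(d+1)$ matrix. The map $x\mapsto\lambda(x)$ is an affine isomorphism from $\mathbb{R}^d$ onto the hyperplane $\{\sum_k\lambda_k=1\}\subseteq\mathbb{R}^{d+1}$, and it sends $y_i^0,\dots,y_i^d$ to the standard basis vectors $e_0,\dots,e_d$. Hence $z_i^0,\dots,z_i^d$ are affinely independent if and only if the barycentric vectors $\lambda(z_i^0),\dots,\lambda(z_i^d)$ are linearly independent in $\mathbb{R}^{d+1}$. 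Indeed, all these vectors lie in the affine hyperplane $\{\sum_k\lambda_k=1\}$, which does not contain the origin, and for such points affine and linear independence are equivalent.

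As in the proof of Lemma~\ref{lem:half-spaces-barycentric}, I write
\[
\lambda(z_i^j)=(1-b_2)e_j+b_2\tau^{(j)}, \qquad \tau^{(j)}\in\Lambda_d .
\]
The matrix $M$ whose $j$-th row is $\lambda(z_i^j)$ is then $M=(1-b_2)I+b_2T$, where $T$ is the row-stochastic matrix with rows $\tau^{(j)}$. Each row of $M$ has diagonal entry at least $1-b_2$ and off-diagonal entries summing to at most $b_2$. Since $b_2<1/2$, we have $1-b_2>b_2$, so $M$ is strictly diagonally dominant. By the Levy--Desplanques (Gershgorin) theorem, $M$ is therefore invertible, and its rows are linearly independent.

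Alternatively, I can argue directly without citing Gershgorin. Any eigenvalue of $T$ has modulus at most $1$, because $T$ is stochastic and $\|T\|_\infty=1$. An eigenvalue $0$ of $M$ would require $T$ to have eigenvalue $-(1-b_2)/b_2$, whose modulus exceeds $1$ when $b_2<1/2$; this is impossible.

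I do not expect a real obstacle. The one point needing care is the equivalence between affine independence in $\mathbb{R}^d$ and linear independence of the barycentric vectors, and this follows from the bijectivity of the affine map $\lambda$ together with the fact that $\sum_k\lambda_k=1$.
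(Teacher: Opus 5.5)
Your proposal is correct and follows essentially the paper's proof: the paper also writes $\lambda(z_i^j)$ with diagonal entry at least $1-b_2$ and off-diagonal mass at most $b_2$, concludes that the matrix $(\lambda_k(z_i^j))_{k,j}$ is strictly diagonally dominant because $b_2<1/2$, and transfers invertibility back through the affine independence of $y_i^0,\ldots,y_i^d$. Your row-versus-column convention and the optional eigenvalue argument via the stochastic matrix $T$ are only cosmetic variations.
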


\begin{proof}
    For every $j=0,\ldots,d$, write \[z_i^j=(1-b_2)y_i^j+b_2x_i^j,\] where \[x_i^j=\sum_{k=0}^d \lambda_k(x_i^j)y_i^k, \quad j=0,\ldots,d.\] Then, for we have that \[z_i^j=\sum_{k=0}^d \lambda_k(z_i^j)y_i^k, \] where \[\lambda_j(z_i^j)=1-b_2+b_2\lambda_j(x_i^j) \geq 1-b_2\] and \[\lambda_k(z_i^j)=b_2\lambda_k(x_i^j), \quad k\in \{0,\ldots,d\} \setminus\{j\}.\] Consequently \[0 \leq \sum_{k \neq j}\lambda_k(z_i^j)=1-\lambda_j(z_i^j)\leq b_2.\] So if we consider the $(d+1) \times (d+1)$ matrix \[A=(\lambda_k(z_i^j))_{k,j=0}^d,\] then  $A$ is strictly diagonally dominant, as $0<b_2<1/2$, and thus invertible. This fact, combined with the affine independence of $y_i^0,\ldots,y_i^d$, proves the lemma.
\end{proof}

We now set $F_i=\conv\{z_i^1,\ldots,z_i^d\}$ and define the shadow behind $F_i$, as seen from $z_i^0$, by \[S(z_i^0,F_i)=\{z_i^0+t(x-z_i^0): x \in F_i, \, t\geq 1\}.\]

It is easy to see that $S(z_i^0,F_i)$ is  convex, since  $F_i$ is convex. Set also \[\widetilde{H}_i=H_i^+ \cup \bigcup_{j=1}^dH_i^j.\]

\begin{lemma}\label{lem:shadow-lemma}
If $b_2$ is chosen sufficiently small, then \[\mathbb{R}^d \setminus \widetilde{H}_i \subseteq S(z_i^0,F_i).\] 
\end{lemma}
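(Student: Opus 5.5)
Everything will be done in the barycentric coordinates $\lambda=\lambda(x)$ of $\Delta_i$. By \eqref{eq:main-half-space-barycentric} and Lemma~\ref{lem:half-spaces-barycentric}, a point $x\notin\widetilde H_i$ satisfies
\[
\lambda_0(x)<0,\qquad s\lambda_0(x)+\lambda_j(x)>b_2\quad (1\le j\le d).
\]
In particular $\lambda_j(x)>b_2-s\lambda_0(x)>b_2$ for every $j\ge1$. The goal is to write $x=z_i^0+t(y-z_i^0)$ with $t\ge1$ and $y\in F_i$. Equivalently, we look for $\alpha_0<0$ and $\alpha_1,\dots,\alpha_d\ge0$ with $\sum_{k=0}^d\alpha_k=1$ and $x=\sum_k\alpha_k z_i^k$; then $t=1-\alpha_0\ge1$ and $y=\sum_{k\ge1}\alpha_k z_i^k/t\in F_i$. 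By Lemma~\ref{lem:z-affine-independence} the points $z_i^k$ are affinely independent, so these $\alpha_k$ are exactly the barycentric coordinates of $x$ with respect to $\conv\{z_i^0,\dots,z_i^d\}$. They are given by $\alpha=A^{-1}\lambda(x)$, where $A=(\lambda_k(z_i^j))$ is the matrix from the proof of Lemma~\ref{lem:z-affine-independence}: each column $j$ equals $(1-b_2)e_j+b_2\tau^j$ with $\tau^j\in\Lambda_d$. So we need to verify the sign conditions $\alpha_0\le 0$ and $\alpha_j\ge0$ for $j\ge1$ for this linear system.

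First I will write $A=(1-b_2)I+b_2T$, where $T$ is column-stochastic with nonnegative entries. Since $\|b_2T/(1-b_2)\|_1\le s<1$ in the column-sum norm, we get the Neumann series
\[
A^{-1}=\frac{1}{1-b_2}\sum_{m\ge0}(-s)^m T^m .
\]
Because $T$ is column-stochastic, the entries of $A^{-1}$ satisfy $\bigl|(A^{-1})_{kl}-\delta_{kl}/(1-b_2)\bigr|\le \frac{s}{(1-b_2)(1-s)}=:\varepsilon$, and $\varepsilon=O(b_2)$.

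The main obstacle is that $\lambda(x)$ is unbounded ($\lambda_0$ can be very negative and the $\lambda_j$ very large), so entrywise perturbation bounds alone do not control signs. I would exploit the structure: the columns of $A$ sum to $1$, and also $\sum_k\lambda_k=1$. Write $\lambda_0=-\beta$ with $\beta>0$, so that $\lambda_j>b_2+s\beta$ for $j\ge1$. To make the inequality homogeneous, I will view $x$ as a point whose coordinates are a large part along $e_j$-directions. Concretely, I will compute
\[
\alpha_0=\frac{1}{1-b_2}\Bigl(\lambda_0-b_2\sum_{l}\tau^l_0\alpha_l\Bigr)
\]
directly from the identity $\lambda=(1-b_2)\alpha+b_2T\alpha$. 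This gives $\alpha_0\le \lambda_0/(1-b_2)<0$ as soon as all $\alpha_l\ge0$ for $l\ge 1$ and $\alpha_0\le0$. I would therefore set this up as a fixed-point or continuity argument on the set $\{\alpha_0\le0,\ \alpha_j\ge0\}$.

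For $j\ge1$, the identity gives
\[
(1-b_2)\alpha_j=\lambda_j-b_2\sum_l\tau^l_j\alpha_l\ \ge\ \lambda_j-b_2\tau^0_j\alpha_0-b_2\sum_{l\ge1}\alpha_l .
\]
Since $\alpha_0\le 0$, the middle term contributes positively. Moreover $\sum_{l\ge1}\alpha_l=1-\alpha_0=1+|\alpha_0|$, and $|\alpha_0|\le(\beta+b_2\sum_{l}\alpha_l)/(1-b_2)$. Combining these, $b_2\sum_{l\ge1}\alpha_l\le b_2+s\beta+O(b_2^2)(1+\beta)$. Comparing with the hypothesis $\lambda_j>b_2+s\beta$, the slack is only of order $b_2^2$, which is delicate. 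I expect I will need to redo this estimate sharply, for instance by using the exact relation $\sum_l\alpha_l=1$ rather than the crude bound. If that fails, the fallback is a geometric argument: note that the facet hyperplanes of the cone $S(z_i^0,F_i)$ through $z_i^0$ and the $(d-2)$-faces of $F_i$ separate $\Delta_i^0$ from $\Delta_i^j$, and compare them with the bounding hyperplanes of $H_i^j$, which were chosen to be the extremal such separating hyperplanes touching the small simplices. Any such separating hyperplane is then dominated by $\partial H_i^j$ on the region $\lambda_0<0$.
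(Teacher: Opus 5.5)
Your reduction is right: with $\alpha=A^{-1}\lambda(x)$ the barycentric coordinates with respect to $z_i^0,\dots,z_i^d$ (the paper's $\beta(x)$), the lemma amounts to $\alpha_0\le0$ and $\alpha_j\ge0$ for $j\ge1$. \textbf{However, there is a genuine gap: the step that proves these signs is missing.}

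Both of your displayed estimates, for $\alpha_0$ and for $\alpha_j$, assume the very sign conditions you want. So the ``fixed-point or continuity'' step is circular as written. Even granting those signs, you are left with an error of order $b_2^2(1+\beta)$ against a hypothesis with no guaranteed slack.

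In fact no perturbative estimate around $A\approx(1-b_2)I$ can close. Suppose $\lambda(z_i^k)=(1-b_2)e_k+b_2e_j$ for all $k\notin\{0,j\}$, and $\lambda(z_i^0)=(1-b_2)e_0+b_2e_{k_0}$ with $k_0\notin\{0,j\}$. Then $\psi_j:=s\lambda_0+\lambda_j$ equals $b_2$ at every $z_i^k$, $k\ne j$. Hence $\partial H_i^j={\rm aff}\{z_i^k:k\neq j\}=\{\alpha_j=0\}$, and $\psi_j(x)>b_2$ is \emph{equivalent} to $\alpha_j>0$: the slack is exactly zero.

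The geometric fallback does not help either. The facet hyperplanes of the shadow cone pass through $z_i^0\in\Delta_i^0$, and $\partial H_i^j$ leaves $\Delta_i^0$ and $\Delta_i^j$ on the same side. More seriously, for a single $j$ the implication ``$\lambda_0(x)<0$ and $\psi_j(x)>b_2$ $\Rightarrow$ $\alpha_j(x)\ge0$'' is false for $d\ge3$. To see this, make another coordinate $\alpha_k(x)$, $k\notin\{0,j\}$, very negative; such an $x$ lies in $H_i^k$ instead. So the half-spaces $H_i^j$ cannot be handled one at a time.

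\textbf{The missing idea is an exact identity rather than an estimate.} Since $\psi_j$ is affine and $\sum_k\alpha_k=1$,
\[
\psi_j(x)-b_2=\sum_{k=0}^d\alpha_k(x)\,\delta_k,\qquad \delta_k:=\psi_j(z_i^k)-b_2 .
\]
The computation of Lemma~\ref{lem:half-spaces-barycentric} gives the sign pattern $-b_2\le\delta_k\le0$ for $k\notin\{0,j\}$, $0\le\delta_0\le b_2$, and $\delta_j\ge1-2b_2$.

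The paper argues by contradiction, choosing $j$ as the index of the \emph{smallest} $\alpha_k(x)$, $k\ge1$, assumed negative. Then each term $\alpha_k\delta_k$, $k\notin\{0,j\}$, is at most $-b_2\alpha_j$. If $\alpha_0>0$, the hypothesis $\lambda_0(x)<0$ forces $\alpha_0\le-\frac{db_2}{1-b_2}\alpha_j$. Together these give $\psi_j(x)-b_2\le-\alpha_j\bigl((d+1)b_2-1+\frac{db_2^2}{1-b_2}\bigr)<0$, contradicting $x\notin H_i^j$. This bound is homogeneous in $\alpha_j$, which disposes of your concern that $\lambda(x)$ is unbounded. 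Finally, $\alpha_0\le0$, since otherwise $x\in\conv\{z_i^0,\dots,z_i^d\}\subseteq\Delta_i\subseteq H_i^+$.

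Alternatively, the same identity makes your continuity idea rigorous:
\begin{enumerate}
\item[(a)] At a boundary point of $S(z_i^0,F_i)$ where $\alpha_j=0$ (with $\alpha_0\le0$ and $\alpha_k\ge0$ otherwise), the identity gives $\psi_j\le b_2$ immediately.
\item[(b)] Boundary points with $\alpha_0=0$ lie in $F_i\subseteq H_i^+$.
\item[(c)] Hence the open convex set $\mathbb{R}^d\setminus\widetilde H_i$ misses $\partial S(z_i^0,F_i)$.
\item[(d)] It also contains the shadow points $z_i^0+t(c-z_i^0)$, where $c$ is the centroid of $F_i$ and $t$ is large (this needs only $b_2<1/(2d+1)$). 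Connectedness then yields the inclusion.
\end{enumerate}
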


\begin{proof}
    For $x \in \mathbb{R}^{d}$, we set \[\beta(x)=(\beta_0(x),\ldots,\beta_d(x)) \in \mathbb{R}^{d+1}\] to be the vector of barycentric coordinates of $x$, with respect to $z_i^0,\ldots,z_i^d$. This is unique due to Lemma~\ref{lem:z-affine-independence}. So, with this notation, $x \in S(z_i^0,F_i)$ is equivalent to \begin{equation}\label{eq:shadow-barycentric}\beta_0(x) \leq 0, \quad \beta_j(x) \geq 0, \quad j=1,\ldots,d.\end{equation} Furthermore, \begin{equation}\label{eq:barycentric-mixture}
    \lambda(x)=\sum_{j=0}^{d}\beta_j(x)\lambda(z_i^j),
    \end{equation}
    as the barycentric coordinates are affine functions. We remind that for every $k=0,\ldots, d$\[\lambda(z_i^k)=(1-b_2)e_k+b_2\tau_k, \quad \tau_k \in \Lambda_d,\] as $z_i^k \in \Delta_i^k$. Let now  \[x \in\mathbb{R}^d \setminus \widetilde{H}_i\] and assume that for some $1 \leq j \leq d$ we have that that \begin{equation}\label{eq:beta-negative-minimum}
    \beta_j(x)= \min_{1 \leq k \leq d} \beta_k(x)<0.
    \end{equation}
    Set also \[\psi_j(x)=s\lambda_0(x)+\lambda_j(x)\] and \[\delta_k=s\lambda_0(z_i^k)+\lambda_j(z_i^k)-b_2, \quad k=0,\ldots,d.\] By the calculations in Lemma~\ref{lem:half-spaces-barycentric}, we can see that
    \begin{equation}\label{eq:delta-inequalities}
    \begin{aligned}        
-b_2 &\leq \delta_k \leq 0, \quad k \in \{1,\ldots,d\}\setminus\{j\},\\
 0&\leq \delta_0 \leq b_2,\\
 1-2b_2& \leq \delta_j \leq 1-b_2.
\end{aligned}
\end{equation}
Notice that since $\sum_{k=0}^{d} \beta_k(x)=1$, we have that \[\psi_j(x)-b_2=\sum_{k=0}^d \beta_k(x)\delta_k .\] Using \eqref{eq:beta-negative-minimum} and \eqref{eq:delta-inequalities} we obtain \begin{equation}\label{eq:phi-upper-bound}
    \psi_j(x)-b_2 \leq -(d-1)b_2\beta_j(x)+(1-2b_2)\beta_j(x)+\beta_0(x)\delta_0.
\end{equation}

Next, we control the quantity $\beta_0(x)\delta_0$. If $\beta_0(x)<0$, then by \eqref{eq:delta-inequalities} \[\beta_0(x)\delta_0 \leq 0.\] Suppose now that $\beta_0(x)>0$. By \eqref{eq:barycentric-mixture} we get that \[\lambda_0(x)=\beta_0(x)\lambda_0(z_i^0)+ \sum_{k=1}^d \beta_k(x)\lambda_0(z_i^k).\] Now, for every $k=1,\ldots,d$ we have that \[0 \leq\lambda_0(z_i^k) \leq b_2\] as $z_i^k \in \Delta_i^k\subseteq \Delta_i$ . Moreover, because $z_i^0 \in \Delta_i^0$,  \[\lambda_0(z_i^0) \geq 1-b_2.\] Therefore,
\[\lambda_0(x) \geq (1-b_2)\beta_0(x)+db_2\beta_j(x),\] which yields \[\beta_0(x) \leq-\frac{db_2}{1-b_2}\beta_j(x),\] since $x  \notin H_i^+,$ which implies $\lambda_0(x)<0$ by \eqref{eq:main-half-space-barycentric}. Hence, by \eqref{eq:delta-inequalities} \[\beta_0(x)\delta_0 \leq \frac{-db_2^2}{1-b_2}\beta_j(x).\] Putting this estimate into \eqref{eq:phi-upper-bound} gives \[\psi_j(x)-b_2\leq -\beta_j(x)\left((d-1)b_2+-1+2b_2+\frac{db_2^2}{1-b_2}\right)<0,\] if $b_2$ is chosen sufficiently small. This is a contradiction by Lemma~\ref{lem:half-spaces-barycentric}, as $x \notin H_i^j$. Thus, \[\beta_j(x) \geq 0, \quad j=1,\ldots,d.\] Finally, if $\beta_0(x)>0$, then \[x \in \conv\{z_i^0,\ldots,z_i^d\} \subseteq \Delta_i \subseteq H_i^+,\] which is again a contradiction.
\end{proof}

\begin{lemma}\label{lem:geometric-small-c}
For every $i=1,\ldots,m$ define
\[
C_i:=z_i^0+\pos
\{z_i^1-z_i^0,\ldots,z_i^d-z_i^0\}.
\]
If we choose \(b_1>0\) sufficiently large and  \(b_2>0\) sufficiently small, then for every \(1 \leq i \neq \ell \leq m\) we have that
\[
\Delta_\ell\subseteq C_i.
\]
\end{lemma}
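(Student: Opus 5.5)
The plan is to reduce the statement to Lemma~\ref{lem:shadow-lemma}. First, $S(z_i^0,F_i)\subseteq C_i$. Indeed, a point $z_i^0+t(x-z_i^0)$ with $t\ge 1$ and $x=\sum_{j\ge1}\theta_j z_i^j\in F_i$ (so $\theta_j\ge0$, $\sum_j\theta_j=1$) equals $z_i^0+\sum_{j=1}^d t\theta_j(z_i^j-z_i^0)$, which lies in $C_i$. Hence, by Lemma~\ref{lem:shadow-lemma}, it suffices to prove
\[
\Delta_\ell\cap \widetilde H_i=\emptyset,\qquad \ell\neq i,
\]
that is, $\Delta_\ell$ meets neither $H_i^+$ nor any $H_i^j$, $1\le j\le d$. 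The order of choices matters. We fix $b_2$ small first, so that Lemmas~\ref{lem:distance-from-origin} and \ref{lem:shadow-lemma} hold; this fixes $L=L(d,b_2,c)$. Only then do we take $b_1$ large in terms of $d,c,L$. We also work with $n\ge n_0$, so that $a_n/r_n$ is small by Lemma~\ref{lem:relations}.

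All of the half-spaces $H_i^+,H_i^1,\dots,H_i^d$ have the form $\{x:\langle x,v\rangle\ge D\}$ with $v\in S^{d-1}$ and $D\ge r_n-Lh_n$. For $H_i^+$ this holds with $v=u_i$ and $D=r_n$. For $H_i^j$ it follows from Lemma~\ref{lem:distance-from-origin} together with $0\notin H_i^j$, shown in its proof. The next step is to show that each such normal $v$ is close to $u_i$. By \eqref{eq:distance-first-coordinate} and \eqref{eq:distance-second-coordinate}, $\nabla\psi_j=(s-\tfrac1d)h_n^{-1}u_i+\tfrac{d-1}{dc^2a_n^2}w_j$. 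Since $\|w_j\|_2=ca_n$ and $|s-1/d|$ is bounded below for small $b_2$, the ratio of the $u_i^\perp$-component to the $u_i$-component is $\lesssim_{b_2,c} h_n/a_n=a_n/r_n$. Thus $\angle(v,u_i)\le C_1 a_n/r_n$ with $C_1=C_1(d,b_2,c)$.

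Next we control angles on the other side. Take $x\in\Delta_\ell$ and write $x=(r_n+\lambda_0h_n)u_\ell+w$ with $w\perp u_\ell$ and $\|w\|_2\le ca_n$, as in the proof of Lemma~\ref{lem:simplex-shell}. Then $\angle(x,u_\ell)\le C_2a_n/r_n$ and $\|x\|_2\le r_n+h_n$. Separation gives $\|y_i-y_\ell\|_2\ge 2b_1a_n$, so $\angle(u_i,u_\ell)\ge 2b_1a_n/r_n$. By the triangle inequality for angles,
\[
\angle(x,v)\ge (2b_1-C_1-C_2)\frac{a_n}{r_n}\ge b_1\frac{a_n}{r_n}
\]
once $b_1\ge C_1+C_2$. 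If $\angle(x,v)\ge\pi/2$, then $\langle x,v\rangle\le 0<D$ and we are done. Otherwise we use $\cos\theta\le 1-\theta^2/5$ on $[0,\pi/2]$ and $a_n^2=r_nh_n$:
\[
\langle x,v\rangle\le (r_n+h_n)\Bigl(1-\frac{b_1^2a_n^2}{5r_n^2}\Bigr)\le r_n+h_n-\frac{b_1^2}{5}h_n<r_n-Lh_n\le D
\]
as soon as $b_1^2>5(L+1)$. Hence $x\notin H_i^+\cup\bigcup_j H_i^j$, and Lemma~\ref{lem:shadow-lemma} yields $x\in S(z_i^0,F_i)\subseteq C_i$.

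I expect the main obstacle to be the angular estimate for the normals of $H_i^j$. One has to verify carefully that the tilt of the normal away from $u_i$ is of order $a_n/r_n$, uniformly in $n$ and with constants depending only on $d,b_2,c$. One also has to confirm that $s-1/d$ stays bounded away from $0$; this needs $b_2<1/(d+1)$, as in Lemma~\ref{lem:distance-from-origin}. Everything else is the elementary quadratic gain $r_n(1-\cos\theta)\approx r_n\theta^2$, which beats the $O(h_n)$ slack precisely because $a_n^2=r_nh_n$. For the finitely many $n<n_0$, where $a_n/r_n$ need not be small, we would either absorb them into the constants of the main theorems or rely on the fact that the construction is used only for large $n$.
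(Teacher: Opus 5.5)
Your proof is correct, but it takes a genuinely different route from the paper's, and in fact reverses the paper's logical order.

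\emph{The paper's route.} It works vertex by vertex. For a vertex $x=y_\ell^k$ it computes the barycentric coordinates $\lambda(x)$ with respect to $\Delta_i$. With $s=\|y_\ell-y_i\|_2$, it shows $\lambda_0(x)\le -s^2/(4a_n^2)$ and $\lambda_j(x)\ge s^2/(4da_n^2)$ for $j\ge1$, hence $\lambda_j(x)\ge\|\lambda(x)\|_1/(12d)$. It then passes to the coordinates $\beta(x)$ with respect to $z_i^0,\dots,z_i^d$, using $\lambda=A\beta$ with $A=I_{d+1}+E$ and the Neumann-series bound $\|\beta-\lambda\|_1\le\frac{2b_2}{1-2b_2}\|\lambda\|_1$. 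This gives $\beta_j(x)>0$ for $j\ge1$. Only afterwards does the paper derive Corollary~\ref{cor:independence-under-condition} from the lemma, via the special choice $z_i^k\in\partial H_i^j$.

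\emph{Your route.} You prove the disjointness $\Delta_\ell\cap\widetilde H_i=\emptyset$ directly, by an angle/curvature estimate. You then obtain $\Delta_\ell\subseteq C_i$ from Lemma~\ref{lem:shadow-lemma} together with $S(z_i^0,F_i)\subseteq C_i$. This is not circular, since neither Lemma~\ref{lem:distance-from-origin} nor Lemma~\ref{lem:shadow-lemma} uses the present lemma.

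The step you flagged as the main obstacle checks out. The $u_i$-component of $-\nabla\psi_j$ is $(1-sd)/(dh_n)>0$, so the outward normal points toward $u_i$, and
\[
\tan\angle(v,u_i)=\frac{d-1}{c(1-sd)}\,\frac{a_n}{r_n}.
\]
The bound $1-\cos\theta\ge 2\theta^2/\pi^2\ge\theta^2/5$ on $[0,\pi]$ and the triangle inequality for angles are also fine.

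\emph{What each buys.} Your approach treats all $d+1$ half-spaces uniformly with a single estimate: a quadratic gain of order $b_1^2h_n$ against a slack of $(L+1)h_n$. It reuses estimates already needed for Lemma~\ref{lem:Hij-measure-upper-bound}. It also makes the corollary immediate, with empty rather than merely null intersection. The paper's approach is self-contained linear algebra, independent of the shadow lemma. It quantifies how deep the vertices sit inside the cone, and it lets $b_1$ be chosen independently of $b_2$. In your version $b_1$ depends on $b_2$ through $L$ and $C_1$; this is harmless, because no constraint on $b_2$ involves $b_1$.

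Finally, your hedge about $n<n_0$ is unnecessary. We have $D>0$ because $0\notin H_i^j$, Lemma~\ref{lem:distance-from-origin} holds for every $n$, and your angle bounds never use smallness of $a_n/r_n$. So your argument works for all $n$.
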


\begin{proof}
As $C_i$ is convex, it suffices to show that every vertex $x \in \Delta_\ell$ belongs to $C_i$. Observe that if $F_i=\conv\{z_i^1,\ldots,z_i^d\}$, it is enough to show that \[[z_i^0,x] \cap F_i \neq \emptyset.\] Since $r_n=\langle y_i,u_i\rangle$, the calculations of Lemma~\ref{lem:distance-from-origin} give \begin{equation}\label{eq:barycentric-coordinates-formula}
\lambda_0(x)=\frac{1}{h_n}\langle x-y_i,u_i\rangle, \quad \lambda_j(x)=\frac{1-\lambda_0(x)}{d}+\frac{d-1}{dc^2a_n^2}\langle x,w_j\rangle,\end{equation} where $w_j \in u_i^\perp$ and $\|w_j\|_2=ca_n$ for every $j=1,\ldots,d.$ Set now $s=\|y_\ell-y_i\|_2\geq 2b_1a_n$. We can write \[x=y_\ell^k=y_\ell+\zeta,\] where if $k=0$, \[\zeta=\frac{h_n}{r_n}\,y_\ell, \quad \|\zeta\|_2=h_n\] and if $k=1,\ldots, d,$ \[\zeta \in y_\ell^\perp, \quad \|\zeta\|_2=ca_n. \] Since \[\langle y_\ell-y_i,u_i\rangle=r_n(\langle u_\ell,u_i\rangle-1),\] we get that \[s^2=r_n^2\,\|u_\ell-u_i\|_2^2=2r_n^2(1-\langle u_\ell,u_i \rangle)=-2r_n \langle y_\ell-y_i,u_i\rangle.\] Hence, \eqref{eq:barycentric-coordinates-formula} yields
\[\lambda_0(x)=-\frac{s^2}{2r_nh_n}+\frac{1}{h_n}\langle \zeta,u_i\rangle.\]
If $k=0$ we have that \[|\langle \zeta,u_i \rangle| \leq h_n,\] whereas if $k=1,\ldots,d$ \[|\langle \zeta, u_i\rangle|=|\langle \zeta, u_i-u_\ell\rangle| \leq \frac{ca_n}{r_n}s.\] To sum up,\begin{equation}\label{eq:zeta-inner-upper-bound}
    \langle \zeta,u_i \rangle \leq |\langle \zeta,u_i \rangle| \leq h_n+\frac{ca_n}{r_n}s.
\end{equation}Therefore, since $a_n^2=r_nh_n$, \begin{equation}\label{eq:0-coordinate-upper-bound}    
\lambda_0(x) \leq-\frac{s^2}{2a_n^2}+\frac{cs}{a_n}+1 \leq-\frac{s^2}{4a_n^2},\end{equation}if we choose $b_1>0$ large enough so that \[c\,\frac{s}{a_n}+1 \leq \frac{1}{4}\left(\frac{s}{a_n}\right)^2.\] It is clear that we can  do so, independently of $n$, as \[\frac{s}{a_n} \geq 2b_1.\] In addition, using again \eqref{eq:zeta-inner-upper-bound}, we see that \begin{equation}\label{eq:absolute-0-coordinate}\begin{aligned}
    |\lambda_0(x)| &\leq \frac{s^2}{2a_n^2}+c\,\frac{s}{a_n}+1 \leq \frac{s^2}{a_n^2},
    \end{aligned}
\end{equation} if $b_1>0$ is large enough.

For $j=1,\ldots,d$ using Cauchy-Schwarz, and considering both cases for $\|\zeta\|_2$, we get \[|\langle x,w_j\rangle|=|\langle x-y_i,w_j \rangle| \leq|\langle y_\ell-y_i,w_j \rangle| +|\langle \zeta,w_j \rangle| \leq ca_n(s+ca_n+h_n).\] Combining with the first inequality of \eqref{eq:0-coordinate-upper-bound}, \eqref{eq:barycentric-coordinates-formula} yields \begin{equation}\label{eq:j-barycentric-lower-bound}
    \begin{aligned}
        \lambda_j(x) & \geq \frac{1}{d}\left[ \frac{s^2}{2a_n^2}-c\,\frac{s}{a_n}-\frac{d-1}{ca_n}\left(s+ca_n+h_n\right)\right]\\
        &=\frac{1}{d}\left[\frac{1}{2}\left(\frac{s}{a_n}\right)^2-\frac{d-1+c^2}{c}\frac{s}{a_n}-(d-1)\left(1+\frac{h_n}{ca_n}\right)\right] \\
        &\geq \frac{s^2}{4da_n^2}, 
    \end{aligned} 
\end{equation} if we choose $b_1>0$ sufficiently large, so that \[\frac{1}{2}\left(\frac{s}{a_n}\right)^2-\frac{d-1+c^2}{c}\frac{s}{a_n}-(d-1)\left(1+\frac{h_n}{ca_n}\right)\geq \frac{1}{4}\left(\frac{s}{a_n}\right)^2,\] which we can do, independently of $n$, since $s/a_n \geq 2b_1$ and $(h_n/a_n)_{n\in \mathbb{N}}$ is a bounded sequence. Hence, by \eqref{eq:absolute-0-coordinate} and \eqref{eq:j-barycentric-lower-bound}  \[\|\lambda(x)\|_1=|\lambda_0(x)|+\sum_{j=1}^{d}\lambda_j(x) =|\lambda_0(x)|+1-\lambda_0(x) \leq 3 \frac{s^2}{a_n^2},\] if $b_1>0$ is large enough. Thus, by \eqref{eq:j-barycentric-lower-bound} \begin{equation}\label{eq:1-norm-comparing}
    \lambda_j(x) \geq \frac{\|\lambda(x)\|_1}{12d}, \quad j=1,\ldots,d.
\end{equation} Consider again the $(d+1)\times(d+1)$ matrix \[A=(\lambda_k(z_i^j))_{k,j=0}^d\] from Lemma~\ref{lem:z-affine-independence}, whose columns are the barycentric vectors of $z_i^j$ with respect to $\Delta_i$. Notice that we can write \[A=I_{d+1}+E,\] where each column of $E$ has $\ell_1$-norm at most $2b_2$. Therefore, \[\|E:\ell_1^{d+1}\to \ell_1^{d+1}\| \leq 2b_2\] and since $0<b_2<1/2$, the Neumann series gives \begin{equation}\label{eq:neumann-result}
    \|(A^{-1}-I_{d+1}): \ell_1^{d+1} \to \ell_1^{d+1}\| \leq \frac{\|E:\ell_1^{d+1}\to \ell_1^{d+1}\|}{1-\|E:\ell_1^{d+1}\to \ell_1^{d+1}\|} \leq \frac{2b_2}{1-2b_2}.
\end{equation} Now, if $\beta(x)$ is the barycentric vector of $x$, with respect to  $z_i^0,\ldots,z_i^d$, we have that \[\lambda(x)=A\beta(x).\] Consequently, by \eqref{eq:neumann-result} \[\|\beta(x)-\lambda(x)\|_1 \leq \frac{2b_2}{1-2b_2}\|\lambda(x)\|_1.\] Thus, from \eqref{eq:1-norm-comparing}, if we choose $b_2>0$ sufficiently small,  then for every $j=1,\ldots, d$, \begin{align*}
    \beta_j(x)&\geq \lambda_j(x) -|\beta_j(x)-\lambda_j(x)| \\
    &\geq \lambda_j(x)-\|\beta(x)-\lambda(x)\|_1\\
    &\geq \left(\frac{1}{12d}-\frac{2b_2}{1-2b_2}\right)\| \lambda(x)\|_1>0.  
\end{align*}
To conclude, \[x=\beta_0(x)z_i^0+\sum_{j=1}^d \beta_j(x)z_i^j=z_i^0+\sum_{j=1}^d\beta_j(x)(z_i^j-z_i^0) \in C_i,\] as $\sum_{j=0}^d\beta_j(x)=1$.
\end{proof}

\begin{corollary}\label{cor:independence-under-condition}
    If $b_1>0$ is large enough, then for every $1 \leq i \neq \ell \leq m$, we have that \[\mu\bigl(\Delta_\ell \cap \widetilde{H}_i\bigr)=0\]
\end{corollary}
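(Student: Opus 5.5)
We will show that $\Delta_\ell \cap \widetilde{H}_i$ is contained in a set of Lebesgue measure zero. Since $\mu$ is absolutely continuous, this gives the claim. The tools are Lemma~\ref{lem:geometric-small-c}, which places $\Delta_\ell$ inside the cone $C_i$, and Lemma~\ref{lem:shadow-lemma} together with the barycentric descriptions \eqref{eq:main-half-space-barycentric} and Lemma~\ref{lem:half-spaces-barycentric} of the half-spaces.

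The key observation is the following. Take $x \in C_i$ and write it in barycentric coordinates $\beta(x)$ with respect to $z_i^0,\ldots,z_i^d$, which are affinely independent by Lemma~\ref{lem:z-affine-independence}. Then $\beta_j(x)\ge 0$ for $j=1,\ldots,d$. Moreover, the proof of Lemma~\ref{lem:geometric-small-c} gives $\lambda_0(x)\le -s^2/(4a_n^2)<0$ for each vertex $x$ of $\Delta_\ell$. Since $\lambda_0$ is affine, this yields $\lambda_0<0$ on all of $\Delta_\ell$, so $\Delta_\ell\cap H_i^+=\emptyset$.

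It remains to treat the half-spaces $H_i^j$, $1\le j\le d$. Recall that
\[
H_i^j=\{\psi_j\le b_2\},\qquad \psi_j=s\lambda_0+\lambda_j .
\]
The plan is to show $\psi_j>b_2$ at every vertex $x$ of $\Delta_\ell$; convexity then excludes all of $\Delta_\ell$ from $H_i^j$. From \eqref{eq:j-barycentric-lower-bound} we have $\lambda_j(x)\ge s^2/(4da_n^2)$, and from \eqref{eq:absolute-0-coordinate} we have $\lambda_0(x)\ge -s^2/a_n^2$. Hence
\[
\psi_j(x)\ \ge\ \frac{s^2}{a_n^2}\Big(\frac1{4d}-\frac{b_2}{1-b_2}\Big),
\]
where $s=\|y_\ell-y_i\|_2$ is the separation from the proof of Lemma~\ref{lem:geometric-small-c}, not the constant of Lemma~\ref{lem:half-spaces-barycentric}. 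The right-hand side exceeds $b_2$ once $b_2<1/(8d)$, say, and $s/a_n\ge 2b_1\ge 1$. So $\Delta_\ell\cap H_i^j=\emptyset$ for all $j$.

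Combining the two cases, $\Delta_\ell\cap\widetilde H_i=\emptyset$, which is stronger than the claimed measure-zero statement. If one is less careful with strict inequalities, the intersection would at worst lie in boundary hyperplanes, which are still $\mu$-null. The main obstacle is making sure the constants used in the proof of Lemma~\ref{lem:geometric-small-c} are compatible with the choice of $b_2$ needed here. That is, $b_2$ must be small depending only on $d$, and $b_1$ large enough for \eqref{eq:absolute-0-coordinate} and \eqref{eq:j-barycentric-lower-bound} to hold, with both choices uniform in $n$, $i$ and $\ell$. A fallback, if the direct estimate on $\psi_j$ fails, is to use $\Delta_\ell\subseteq C_i$ and argue via $\beta$-coordinates as in Lemma~\ref{lem:shadow-lemma}. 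Indeed, the computation \eqref{eq:phi-upper-bound} run in reverse shows that points with all $\beta_j\ge0$ and $\lambda_0<0$ satisfy $\psi_j>b_2$ up to a boundary set.
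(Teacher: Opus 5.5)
Your argument is correct, but it takes a different route from the paper.

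\textbf{The paper's route.} For $H_i^j$, the paper chooses the free points $z_i^k\in\Delta_i^k\cap\partial H_i^j$ for $k\neq j$. Then the apex of $C_i$ and all generators except $z_i^j-z_i^0$ lie in $\partial H_i^j$, and the remaining generator points into $\{\psi_j>b_2\}$. The \emph{statement} of Lemma~\ref{lem:geometric-small-c} then gives $\Delta_\ell\subseteq C_i\subseteq\{\psi_j\ge b_2\}$. This yields only $\Delta_\ell\cap H_i^j\subseteq\partial H_i^j$, so absolute continuity of $\mu$ is needed at the end. For $H_i^+$ the paper does a separate inner-product computation, showing $\langle x,y_i\rangle<r_n^2$ at the vertices of $\Delta_\ell$.

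\textbf{Your route.} You bypass the cone, the $\beta$-coordinates and the Neumann-series step entirely. You take the vertex estimates \eqref{eq:0-coordinate-upper-bound}, \eqref{eq:absolute-0-coordinate} and \eqref{eq:j-barycentric-lower-bound} from the first half of that proof. You feed them into the affine functions $\lambda_0$ and $\psi_j$ that describe the half-spaces via \eqref{eq:main-half-space-barycentric} and Lemma~\ref{lem:half-spaces-barycentric}.

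\textbf{What each buys.} Your approach treats $H_i^+$ and $H_i^j$ uniformly. It needs no special choice of the $z_i^k$, and it gives the stronger conclusion $\Delta_\ell\cap\widetilde H_i=\emptyset$ without any measure-zero argument. The price is that you rely on intermediate inequalities inside a proof rather than on a stated lemma, whereas the paper uses the lemma as a black box. Your opening remark that $\beta_j\ge0$ on $C_i$, and your fallback paragraph, play no role and can be dropped.

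\textbf{A small numerical slip.} With $s/a_n=1$ and $b_2$ close to $1/(8d)$, the quantity $\frac{1}{4d}-\frac{b_2}{1-b_2}$ is slightly \emph{smaller} than $b_2$: the difference tends to $-\frac{1}{8d(8d-1)}$. This is harmless. Once $\frac{b_2}{1-b_2}<\frac1{4d}$, your lower bound grows like $(s/a_n)^2\ge 4b_1^2$, so it suffices to take $b_1$ large. In fact \eqref{eq:absolute-0-coordinate} already forces $(s/a_n)^2>2$, and that is enough for every $b_2<1/(8d)$. So just replace ``$2b_1\ge1$'' by ``$b_1$ large enough''.
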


\begin{proof}
    Let $1 \leq j \leq d$. Since the boundary of $H_i^j$ touches $\Delta_i^k, \, k \in \{0,\ldots,d\} \setminus\{j\}$, we can choose points \[z_i^k \in \Delta_i^k \cap \partial H_i^j \quad k \in \{0,\ldots,d\}\setminus\{j\}.\] Choose also some $z_i^j \in \Delta_i^j$. With this selection, by the definition of $H_i^j$, we can see that \[z_i^0+\pos\{z_i^k-z_i^0:k=1,\ldots d\} \subseteq {\rm int}(H_i^j)^{c}.\] Therefore, Lemma~\ref{lem:geometric-small-c} yields \[\Delta_\ell \cap H_i^j \subseteq \partial H_i^j,\] if $b_1>0$ is chosen large enough. We will now show that \[\Delta_\ell \cap H_i^+=\emptyset,\] where we remind that \[H_i^+=\{x: \langle x,y_i \rangle \geq r_n^2\}.\] It suffices to show that if $x \in \Delta_\ell$ is some of its vertices, then $x \notin H_i^+$. Recall first that \[\|y_i-y_\ell\|_2 \geq 2b_1a_n,\] which implies that \[\langle y_i,y_\ell \rangle \leq r_n^2-2b_1^2r_nh_n. \] If $x=y_{\ell}^0=(1+h_n/r_n)y_\ell$, then \[\langle x,y_i\rangle \leq r_n^2+h_nr_n(1-2b_1^2)-2b_1^2h_n^2 <r_n^2,\] if $b_1>1/\sqrt{2}$. Assume now that $x=y_\ell^k=y_\ell+w_k,$ for some $1 \leq k \leq d$, where \[w_k \perp y_\ell, \quad \|w_k\|_2=ca_n.\] By Cauchy-Schwarz,
    \begin{align*}
        \langle y_i,x \rangle &=\langle y_i,y_\ell\rangle +\langle y_i,w_k\rangle=\langle y_i,y_\ell\rangle +\langle y_i -y_\ell,w_k\rangle\\
        & \leq r_n^2- \frac{\|y_i-y_\ell\|_2^2}{2}+ca_n \|y_i-y_\ell\|_2 \leq r_n^2+(c-b_1)a_n\|y_i-y_\ell\|_2\\
        &<r_n^2,
    \end{align*} if $b_1>c$. Thus, \[ \Delta_\ell \cap \widetilde{H}_i \subseteq \bigcup_{j=1}^{d} \partial H_i^j\] and the result follows, because $\mu$ is absolutely continuous with respect to Lebesgue measure.
\end{proof}

\section{Lower bound for the intrinsic volumes}

In this section we prove Theorem~\ref{thm:main-general-radial}. To this end, we establish Proposition~\ref{prop:variance-lower-bound}, which  is a local variance lower bound for the intrinsic volume contribution generated by a single configuration, with all remaining points fixed. This provides the basic fluctuation estimate that is later summed over the many well-separated good configurations.

However, in order to do so, we first need to collect a few preliminary facts. For the rest of the section, we fix some $\ell \in \{1,\ldots,d\}$. The following lemma is standard; nevertheless, we include a proof for completeness.

\begin{lemma}\label{lem:angle-and-projection}
    Let $u \in S^{d-1}$ and $L \in G_{d,\ell}$ . If $\angle(u,L)=\theta$, then \[\cos \theta =\|P_Lu\|_2, \quad \sin\theta= \|P_{L^\perp}u\|_2.\]
\end{lemma}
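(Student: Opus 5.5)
The plan is to compute $\cos\theta$ directly from the formula for $\angle(u,L)$ recorded in Section~2, and then obtain $\sin\theta$ from the orthogonal decomposition $u=P_Lu+P_{L^\perp}u$.

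\emph{Step 1: maximise the inner product over $S_L$.} For $u\in S^{d-1}$ the Section~2 formula reads
\[\angle(u,L)=\min_{v\in S_L}\arccos|\langle u,v\rangle|.\]
Since $\arccos$ is decreasing on $[0,1]$, this gives
\[\cos\theta=\max_{v\in S_L}|\langle u,v\rangle|.\]
For $v\in L$ we have $\langle u,v\rangle=\langle P_Lu,v\rangle$, because $u-P_Lu\in L^\perp$. Cauchy--Schwarz then yields
\[|\langle u,v\rangle|\le\|P_Lu\|_2 \quad\text{for every } v\in S_L.\]
This bound is attained. If $P_Lu\neq0$, take $v=P_Lu/\|P_Lu\|_2\in S_L$. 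If $P_Lu=0$, every $v\in S_L$ gives $|\langle u,v\rangle|=0=\|P_Lu\|_2$; here $S_L\neq\emptyset$ because $\ell\ge1$. In both cases $\cos\theta=\|P_Lu\|_2$.

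\emph{Step 2: deduce the sine.} Since $\theta\in[0,\pi/2]$, we have $\sin\theta\ge0$. By Pythagoras, $1=\|u\|_2^2=\|P_Lu\|_2^2+\|P_{L^\perp}u\|_2^2$. Hence
\[\sin\theta=\sqrt{1-\cos^2\theta}=\|P_{L^\perp}u\|_2.\]

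The only point requiring care is the degenerate case $P_Lu=0$, where the maximising unit vector is not canonical; it is handled in Step~1 as described. No earlier results beyond the angle formula in Section~2 are needed.
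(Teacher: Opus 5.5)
Your proof is correct and follows the paper's argument essentially verbatim: Cauchy--Schwarz applied to $|\langle u,v\rangle|=|\langle P_Lu,v\rangle|$ with equality at $v=P_Lu/\|P_Lu\|_2$, monotonicity of $\arccos$, and then Pythagoras for the sine. The only cosmetic difference is that the paper treats the case $u\in L^\perp$ separately before using the $\min$ formula, whereas you fold $P_Lu=0$ into the same maximisation.
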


\begin{proof}
If $u \in L^\perp$, then it is obvious that \[\cos\theta=0=\|P_Lu\|_2.\] Assume now that $u \notin L^\perp$.
    As $u \in S^{d-1}$ we have that \[\theta=\min_{v \in S_L} \, \arccos|\langle u,v \rangle|.\] If we decompose \[u=P_Lu+P_{L^\perp}u,\] then for every $v \in S_L$, Cauchy-Schwarz yields \[|\langle u,v\rangle|=|\langle P_Lu,v\rangle| \leq \|P_Lu\|_2\] and we have equality when \[v=\frac{P_Lu}{\|P_Lu\|_2} \in S_L.\] Therefore, as $x \mapsto\arccos x$ is decreasing: \[\theta =\arccos \bigl(\,\max_{v \in S_L} |\langle u,v\rangle|\bigr)=\arccos\bigl(\|P_Lu\|_2\bigr).\] To conclude, notice that since $u \in S^{d-1}$ and $\theta \in [0,\pi/2]$: \[\sin\theta=\sqrt{1-\cos^2\theta}=\sqrt{\|u\|_2^2-\|P_Lu\|_2^2}=\|P_{L^\perp}u\|_2.\]
\end{proof}

The next geometric fact has appeared in \cite{Bar-Fod-Vig} and \cite{Bar-Th}. We give a more probabilistic proof.

\begin{lemma}\label{lem:small-angles-measure}
    Let $u \in S^{d-1}$ . There exists a constant $c_1>0$ such that \[\nu_{d,\ell}(\{L \in G_{d,\ell}: \angle (u,L) \leq t\}) \gtrsim t^{d-\ell}, \] for every $0<t<c_1$.
\end{lemma}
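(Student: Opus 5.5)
We argue probabilistically: represent a Haar-random $L\in G_{d,\ell}$ as the span of $\ell$ independent standard Gaussian vectors $g_1,\ldots,g_\ell$ in $\mathbb{R}^d$. This is a standard realization of $\nu_{d,\ell}$, since the law of $\mathrm{span}\{g_1,\ldots,g_\ell\}$ is $O(d)$-invariant and the span is almost surely $\ell$-dimensional. By rotational invariance we may take $u=e_1$. By Lemma~\ref{lem:angle-and-projection}, $\angle(u,L)\le t$ is equivalent to $\|P_{L^\perp}e_1\|_2\le \sin t$, i.e. $\dist(e_1,L)\le \sin t$. For $t<c_1\le \pi/4$ we have $\sin t\ge 2t/\pi$, so it suffices to show
\[
\mathbb{P}\bigl(\dist(e_1,L)\le \varepsilon\bigr)\gtrsim \varepsilon^{d-\ell}
\]
for small $\varepsilon>0$.

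We exhibit a sufficient event of the required probability. Write $g_k=(g_k^{(1)},\ldots,g_k^{(d)})$ and let $\bar g_1=(g_1^{(\ell+1)},\ldots,g_1^{(d)})\in\mathbb{R}^{d-\ell}$ be the last $d-\ell$ coordinates of $g_1$. Consider the event $E$ on which:
\begin{itemize}
\item $g_1^{(1)}\in[1,2]$;
\item the coordinates of $g_1$ with indices $2,\ldots,\ell$ lie in $[-1,1]$;
\item $\|\bar g_1\|_2\le \varepsilon$;
\item the vectors $g_2,\ldots,g_\ell$ are close to $e_2,\ldots,e_\ell$, say within a small constant $\eta=\eta(d)$ of them in norm.
\end{itemize}
The point is that on $E$, after a controlled linear correction, $e_1$ is within $O(\varepsilon)$ of $L$.

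The estimate on $E$ goes as follows. Let $M=\mathrm{span}\{e_1,\ldots,e_\ell\}$ and write $g_k=m_k+\bar g_k$ with $m_k\in M$ and $\bar g_k\in M^\perp$. On $E$ the vectors $m_1,\ldots,m_\ell$ form a basis of $M$ whose Gram matrix is well conditioned, with bounds depending only on $d$. Hence $e_1=\sum_k c_k m_k$ with $|c_k|\lesssim 1$, and
\[
\dist(e_1,L)\le \Bigl\|e_1-\sum_k c_k g_k\Bigr\|_2=\Bigl\|\sum_k c_k\bar g_k\Bigr\|_2 .
\]
This sum is small only if every $\bar g_k$ is small. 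Requiring $\|\bar g_k\|_2\le\varepsilon$ for every $k$ would only give probability $\varepsilon^{\ell(d-\ell)}$, which is too small, so the event has to be organized more carefully.

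The refinement is to first condition on $g_2,\ldots,g_\ell$ lying near $e_2,\ldots,e_\ell$ with all coordinates bounded, an event of probability $\gtrsim 1$. On that event let $N=\mathrm{span}\{g_2,\ldots,g_\ell\}$, an $(\ell-1)$-dimensional subspace close to $\mathrm{span}\{e_2,\ldots,e_\ell\}$. Define the orthogonal unit vector $v=P_{N^\perp}e_1/\|P_{N^\perp}e_1\|_2$, which is close to $e_1$. Then $\dist(e_1,L)\le \|P_{N^\perp}e_1\|_2\,\dist(v,\mathrm{span}\{P_{N^\perp}g_1\})$. Now $P_{N^\perp}g_1$ is a standard Gaussian in the $(d-\ell+1)$-dimensional space $N^\perp$, independent of $N$. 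It therefore suffices that this Gaussian has component along $v$ in $[1,2]$ and component orthogonal to $v$ within $N^\perp$ of norm at most $\varepsilon$. That orthogonal component is a standard Gaussian in a $(d-\ell)$-dimensional space, and the probability that it lies in an $\varepsilon$-ball is $\asymp\varepsilon^{d-\ell}$, since the Gaussian density is bounded below near the origin. Independence of the pieces then gives $\mathbb{P}(\dist(e_1,L)\lesssim\varepsilon)\gtrsim \varepsilon^{d-\ell}$, and rescaling $\varepsilon$ by a constant yields the claim with a suitable $c_1$.

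The step needing the most care is this conditioning. One must check that $N^\perp$ has dimension $d-\ell+1$ almost surely, that $\|P_{N^\perp}e_1\|_2$ is bounded above and below on the conditioning event, and that the estimate is uniform over the conditioned $N$. Each of these reduces to continuity near the reference configuration $N=\mathrm{span}\{e_2,\ldots,e_\ell\}$, where $P_{N^\perp}e_1=e_1$. In the case $\ell=1$ we have $N=\{0\}$ and the argument simplifies directly.
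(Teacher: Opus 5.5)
Your proof is correct, but it takes a different route from the paper's. The paper writes $L=QE_\ell$ with $Q$ Haar-distributed on $O(d)$ and observes that $Q^Tu$ is uniform on $S^{d-1}$. It concludes that $\sin^2\angle(u,L)=\|P_{L^\perp}u\|_2^2$ has exactly the $\mathrm{Beta}\bigl(\tfrac{d-\ell}{2},\tfrac{\ell}{2}\bigr)$ law, and the bound follows by integrating that density over $[0,\sin^2 t]$.

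You instead realize $\nu_{d,\ell}$ as the law of the span of $\ell$ Gaussian vectors and use the orthogonal splitting $L=N\oplus\mathrm{span}\{P_{N^\perp}g_1\}$. This reduces the question to a small-ball estimate for a standard Gaussian vector in $d-\ell$ dimensions, so you never need the explicit distribution. The paper's argument is shorter, and because it identifies the law exactly it also gives the matching upper bound $\asymp t^{d-\ell}$. Yours is more elementary and would work where no closed-form law is available, but it only gives the one-sided estimate.

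Your write-up can be streamlined considerably. The event $E$ in the first half is never used and should be dropped. The step you flag as delicate, conditioning on $g_2,\ldots,g_\ell$ being near $e_2,\ldots,e_\ell$, is unnecessary. Your displayed inequality is in fact an identity,
$\dist(e_1,L)=\|P_{N^\perp}e_1\|_2\,\dist\bigl(v,\mathrm{span}\{P_{N^\perp}g_1\}\bigr)$, and $\|P_{N^\perp}e_1\|_2\le 1$ always. Moreover, given $g_2,\ldots,g_\ell$, consider the event that $P_{N^\perp}g_1$ has component along $v$ in $[1,2]$ and orthogonal component of norm at most $\varepsilon$. Its conditional probability is the same number $\mathbb{P}(\xi\in[1,2])\,\gamma_{d-\ell}(\varepsilon B_2^{d-\ell})\gtrsim\varepsilon^{d-\ell}$ for every $N$, where $\xi$ is a standard normal variable. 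If $P_{N^\perp}e_1=0$ then $e_1\in L$ and there is nothing to prove. So no uniformity over a neighbourhood of the reference configuration is needed.
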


\begin{proof}
    If $\ell=d$, then $G_{d,\ell}=\{\mathbb{R}^d\}$ and $\angle(u,\mathbb{R}^d)=0$. Hence, \[\nu_{d,\ell}(\{L \in G_{d,\ell}: \angle (u,L) \leq t\})=1=t^{d-\ell},
    \] for every $t>0$. Assume now that $1 \leq \ell \leq d-1$. It is obvious  that if $0<t<\pi/2$, then \begin{equation}\label{eq:integral-sine-representation}
        \nu_{d,\ell}(\{L \in G_{d,\ell}: \angle (u,L) \leq t\})=\nu_{d,\ell}(\{L \in G_{d,\ell}: \sin^2(\angle(u,L)) \leq \sin^2t\})
    \end{equation} Let also $\{e_1,\ldots,e_d\}$ be the usual orthonormal basis of $\mathbb{R}^d$ and set $E_\ell={\rm span}\{e_1,\ldots,e_\ell\}$. If $Q \sim \nu$, where $\nu$ is the rotationally invariant Haar probability measure on $O(d)$, then the same holds true for $Q^T$. Therefore \[Q^Tu \sim {\rm Unif}(S^{d-1}), \quad L=Q\,E_\ell\sim\nu_{d,\ell}.\] Write now \[Q^Tu=(U_1,\ldots,U_d)\] and observe that \[\|P_{L^\perp}u\|_2^2 =\sum_{j=\ell+1}^d \langle u,Qe_j \rangle^2=\sum_{j=\ell+1}^{d} U_j^2.\] Now if $g_1,\ldots,g_d$ are independent $N(0,1)$ random variables, we have that \[Q^Tu\overset{d}{=} \frac{(g_1,\ldots,g_d)}{\|g\|_2}, \] which implies that \begin{equation}\label{eq:beta-representation}
        \|P_{L^\perp}u\|_2^2 \overset{d}{=}\frac{g_{\ell+1}^2+\ldots+g_d^2}{g_1^2+\ldots+g_d^2} \sim {\rm Beta}(\alpha,\beta), \quad \alpha=\frac{d-\ell}{2}, \, \beta=\frac{\ell}{2}.
    \end{equation} Combining \eqref{eq:integral-sine-representation} and \eqref{eq:beta-representation} with Lemma~\ref{lem:angle-and-projection}, we obtain 
    \begin{equation}\label{eq:sine-integral}
       \nu_{d,\ell}(\{L \in G_{d,\ell}: \angle (u,L) \leq t\})=B(\alpha,\beta)^{-1} \int_{0}^{\sin^2 t} s^{\alpha-1}(1-s)^{\beta-1} \, ds.
    \end{equation}
    We choose $c_1=\pi/4$. So, if $0<t<c_1$ and $0 \leq s \leq \sin^2 t$, we have that  \[1-s \geq \frac{1}{2}, \quad \sin t \geq \frac{t}{2}.\] Consequently, \begin{equation}\label{eq:beta-integral-inequalities}
        \int_{0}^{\sin^2 t} s^{\alpha-1}(1-s)^{\beta-1} \, ds \geq \frac{2^{-\beta+1}}{\alpha} \sin^{2\alpha}(t)\geq \frac{2^{-2\alpha-\beta+1}}{\alpha} t^{d-\ell}.    
    \end{equation} Consider now that $\alpha,\beta$ are functions of $d, \ell$. Hence, \eqref{eq:sine-integral} and \eqref{eq:beta-integral-inequalities} yield the lemma.
\end{proof}

We will also need the following lemma, which is presumably well known. However, since we were unable to locate a proof in the literature, we record one below. A dual version of this result, appears in \cite[Theorem~2]{MTTT}.

\begin{lemma}\label{lem:projection-volume-lipschitz}
    Let $K \subseteq \mathbb{R}^d$ be a convex body with $D=\diam(K)$. If  $L,M \in G_{d,\ell}$, then \[|\vol_\ell (P_LK)-\vol_\ell (P_MK)| \leq C_{\ell} D^{\ell} d_{G_{d,\ell}}(L,M),\] where $C_\ell>0$ is a constant depending only on $\ell$.
\end{lemma}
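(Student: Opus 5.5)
The plan is to compare the two projections through a single linear map that carries one subspace onto the other, and to use that the projected sets are convex with diameter at most $D$. Write $\varepsilon=d_{G_{d,\ell}}(L,M)=\|P_L-P_M\|$. Since $\vol_\ell(P_LK)\le \omega_\ell D^\ell$ for every $L$, the inequality is trivial when $\varepsilon\ge 1/2$: it holds with $C_\ell\ge 2\omega_\ell$. So assume $\varepsilon<1/2$.

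Consider the linear map $T:=P_M|_L:L\to M$. For $x\in L$ we have $\|Tx-x\|_2=\|(P_M-P_L)x\|_2\le\varepsilon\|x\|_2$. Hence $T$ is injective, and since $\dim L=\dim M$ it is a bijection. Moreover all its singular values lie in $[1-\varepsilon,1+\varepsilon]$, so that
\[
(1-\varepsilon)^\ell\le|\det T|\le(1+\varepsilon)^\ell .
\]
Next, $P_M=P_MP_L+P_M(I-P_L)$, and $\|P_M(I-P_L)\|=\|P_M-P_MP_L\|$. Writing $P_M-P_MP_L=(P_M-P_L)-(P_M-P_L)P_L$ gives $\|P_M(I-P_L)\|\le 2\varepsilon$. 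Thus for every $x\in K$,
\[
\|P_Mx-T(P_Lx)\|_2\le 2\varepsilon\,\|x-x_0\|_2\le 2\varepsilon D,
\]
once we translate $K$ so that a point $x_0\in K$ is the origin. Translation does not affect any of the volumes involved. It follows that
\[
P_MK\subseteq T(P_LK)+2\varepsilon D\,B_M .
\]

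Now let $A=T(P_LK)$, a convex set in $M$ of diameter at most $(1+\varepsilon)D\le 2D$. By Steiner's formula in $M$,
\[
\vol_\ell(A+2\varepsilon D B_M)-\vol_\ell(A)=\sum_{k<\ell}\omega_{\ell-k}V_k(A)(2\varepsilon D)^{\ell-k}.
\]
Since $V_k(A)\lesssim_\ell D^k$ by monotonicity (compare with a ball of radius $2D$), this difference is $\lesssim_\ell D^\ell\varepsilon$ for $\varepsilon<1/2$. Therefore
\[
\vol_\ell(P_MK)\le(1+\varepsilon)^\ell\vol_\ell(P_LK)+C\varepsilon D^\ell\le \vol_\ell(P_LK)+C'_\ell\varepsilon D^\ell ,
\]
where we used $\vol_\ell(P_LK)\le\omega_\ell D^\ell$ and $(1+\varepsilon)^\ell-1\le C\varepsilon$. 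The symmetric argument with $L$ and $M$ swapped gives the reverse inequality, which proves the lemma.

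The step that needs the most care is the containment $P_MK\subseteq T(P_LK)+2\varepsilon DB_M$, including the translation normalization. A fallback there is to approximate $P_M x$ by $T P_L x$ directly through the identity $P_M-P_MP_L=P_M(P_M-P_L)(I-P_L)$, which is valid because $(P_M-P_L)(I-P_L)=P_M(I-P_L)$. This bounds the error by $\varepsilon$ times the norm, so the constant improves to $\varepsilon D$. Apart from this containment, the argument is routine bookkeeping of constants depending only on $\ell$.
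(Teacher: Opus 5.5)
Your proof is correct and follows essentially the same route as the paper. Both use the map $T=P_M|_L$, the determinant bounds from its singular values, the approximation $P_M\approx TP_L$ on a translate of $K$ inside $DB_2^d$, and then a volume comparison under a small Hausdorff perturbation. The differences are cosmetic. You carry out the paper's ``well-known argument'' explicitly through Steiner's formula, using a one-sided containment and then symmetry in $L,M$. Your initial $2\varepsilon$ bound is sharpened to the paper's $\varepsilon$ by the identity $P_M(I-P_L)=(P_M-P_L)(I-P_L)$, which you already mention as a fallback and which is exactly what the paper uses.
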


\begin{proof}
    By translation, we can assume that $K \subseteq DB_2^d$. Set \[\delta= d_{G_{d,\ell}}(L,M).\] If $\delta=1$, then we have that \[P_LK \subseteq DB_{L}, \quad P_MK \subseteq DB_M.\] Therefore, \[|\vol_\ell (P_LK)-\vol_\ell (P_MK)| \leq \max\{\vol_\ell (P_LK),\vol_\ell (P_MK)\} \leq \omega_{\ell}D^{\ell}=\omega_\ell D^\ell\delta.\] So we can assume that $0 < \delta<1$. Set 
    \[T:L \to M, \quad T=P_M|_L.\]
    If $x \in L$, then $P_Lx=x$. So, \[\|Tx-x\|_2=\|P_Mx-P_Lx\|_2\leq \delta \|x\|_2.\] Hence, \[\|Tx\|_2\geq \|x\|_2-\|Tx-x\|_2 \geq (1-\delta)\|x\|_2,\] which means that \[(1-\delta)\|x\|_2\leq \|Tx\|_2\leq \|x\|_2.\] Then $T$ is an isomorphism, as $L,M$ have the same dimension. Moreover, if $s_1,\ldots,s_\ell$ are the singular values of $T$, then \[1-\delta \leq s_i \leq 1, \quad i=1,\ldots,\ell.\] So we can deduce that \begin{equation}\label{eq:determinant-around-1}
    0 \leq 1-|{\rm det}T| \leq 1-(1-\delta)^\ell \leq \ell \delta.
    \end{equation} Observe now that \[P_M-TP_L=P_M(I_n-P_L)=(P_M-P_L)(I_n-P_L)=(P_M-P_L)P_{L^\perp},\] as $P_L=P_L^2$. Consequently, \[\|P_M-TP_L\|\leq \|P_M-P_L\| \|P_{L^\perp}\|=\delta.\] Now, if $x \in K$, we have that \[\|(P_M-TP_L)x\|_2 \leq \delta D,\] or equivalently \[d_{H}(P_MK,TP_LK) \leq \delta D.\] In addition, \[P_MK, TP_LK \subseteq DB_M.\] A well known argument in convex geometry implies that \begin{equation}\label{eq:first-volume-bound}
    |\vol_\ell (P_MK)-\vol_\ell(TP_LK)|  \leq C_\ell\, D^{\ell-1} d_{H}(P_MK,TP_LK) \leq c_\ell\, D^\ell \delta,
    \end{equation} where $c_\ell>0$ is a constant depending only on $\ell$. However, \[\vol_\ell(TP_LK)=|{\rm det}T|\,\vol_\ell(P_LK).\] So by \eqref{eq:determinant-around-1} \[|\vol_{\ell}(P_LK)-\vol_\ell(TP_LK)|=(1-|{\rm det}T|)\, \vol_{\ell}(P_LK) \leq\ell\,\omega_\ell D^\ell \delta.\] Combining with \eqref{eq:first-volume-bound}, we get the proof.
\end{proof}

Recall now that for some fixed $i=1,\ldots,m$, the vectors $z_i^j$ are arbitrary points of $\Delta_i^j, \ j=0,\ldots,d$. We have also set \[F_i =\conv\{z_i^1,\ldots,z_i^d\}.\] 

For a convex body $K \subseteq \mathbb{R}^d$ we denote by $\mu_K$ the normalized restriction of $\mu$ to $K$, i.e. \[\mu_K(B)=\frac{\mu(B \cap K)}{\mu(K)}, \quad B \in \mathcal{B}(\mathbb{R}^d).\] Observe that $\mu_K$ is always well defined, as $\mu$ has full support and $K$ has non-empty interior.

\begin{proposition}\label{prop:variance-lower-bound}
    Fix  $1 \leq i \leq m$. Let $Z$ be a random vector on $\Delta_i^0$, with $Z \sim \mu_{\Delta_i^0}$. Then, \[\Var \bigl(V_\ell(\conv\{Z,F_i\})\bigr) \gtrsim a_n^{4\ell-2d-2}h_n^{2d-2\ell+2}\]
\end{proposition}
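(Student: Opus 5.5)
The plan is to use the identity $\Var(X)\ge \mathbb{E}\,\Var(X\mid W)$ together with a comparison of two nested local simplices. Note that the target bound is a square,
\[
a_n^{4\ell-2d-2}h_n^{2d-2\ell+2}=\Bigl(a_n^{\ell-1}h_n\cdot (h_n/a_n)^{d-\ell}\Bigr)^2 .
\]
The heuristic is that moving $Z$ radially by an amount of order $h_n$ changes $\vol_\ell(P_L\conv\{Z,F_i\})$ by about $a_n^{\ell-1}h_n$. This should hold whenever $\angle(u_i,L)\le \kappa h_n/a_n$, and by Lemma~\ref{lem:small-angles-measure} such $L$ have $\nu_{d,\ell}$-measure $\gtrsim (h_n/a_n)^{d-\ell}$. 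Kubota's formula \eqref{eq:intrinsic-kubota-formula} then turns this into a deterministic gap of order $a_n^{\ell-1}h_n(h_n/a_n)^{d-\ell}$ between two configurations, each of probability $\gtrsim 1$.

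\textbf{Step 1 (conditioning on the tangential position).} Write $Z=\langle Z,u_i\rangle u_i+W$ with $W=P_{u_i^\perp}Z$. Inside $\Delta_i^0$, the radial coordinate $\lambda_0$ ranges over $[1-b_2,1]$. I would split $\Delta_i^0$ into an upper region $A_+=\{\lambda_0\ge 1-b_2/3\}$ and a lower region $A_-=\{\lambda_0\le 1-2b_2/3\}$. The density of $\mu$ on $\Delta_i$ is comparable to $f_\mu(r_n)$ by \eqref{eq:simplices-f-asymptotics}, and the fibres $\{W=w\}\cap\Delta_i^0$ have length comparable to $b_2h_n$ for a set of $w$ of conditional probability $\gtrsim 1$, namely those $w$ near the centre of the projected simplex. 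For such $w$, both $A_+$ and $A_-$ carry a conditional probability $\gtrsim_{b_2}1$ on the fibre. Hence
\[
\Var(X\mid W=w)\gtrsim \inf\bigl\{(X(z)-X(z'))^2 : z\in A_+,\ z'\in A_-\ \text{on the fibre}\bigr\},
\]
where $X(z)=V_\ell(\conv\{z,F_i\})$.

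\textbf{Step 2 (nesting).} For $z,z'$ on the same fibre with $z'=z-tu_i$ and $t\gtrsim b_2h_n$, the line through $z$ in direction $-u_i$ should meet $F_i$. The reason is that $W$ lies within $O(b_2a_n)$ of the centre $y_i$, while by Proposition~\ref{prop:main-properties}(vii) together with $z_i^j\in\Delta_i^j$ the projection of $F_i$ to $u_i^\perp$ contains a ball of radius $\gtrsim a_n$ around $y_i$. Moreover $F_i$ lies below $z'$, since $\lambda_0\le b_2$ on $F_i$. Therefore $z'\in\conv\{z,F_i\}$, so $\conv\{z',F_i\}\subseteq\conv\{z,F_i\}$. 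By monotonicity, every projected volume difference $\vol_\ell(P_L\conv\{z,F_i\})-\vol_\ell(P_L\conv\{z',F_i\})$ is nonnegative, and Kubota's formula reduces the problem to bounding it from below on a set of subspaces $L$.

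\textbf{Step 3 (the main obstacle: a uniform projected gap).} First take $L\ni u_i$. Then $P_L\conv\{z,F_i\}$ is essentially a pyramid with apex $P_Lz$ over $P_LF_i$, and $P_LF_i$ contains an $(\ell-1)$-dimensional ball of radius $\gtrsim a_n$ transversal to $u_i$; the tilt of $F_i$ in the $u_i$ direction is only $O(b_2h_n)$. The set difference therefore contains a region of $\ell$-volume $\gtrsim a_n^{\ell-1}t\gtrsim a_n^{\ell-1}h_n$. I expect the delicate part here to be controlling the tilt of $F_i$ uniformly in the choice of the $z_i^j$, which forces $b_2$ small.

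Next take a general $L$ with $\angle(u_i,L)\le\kappa h_n/a_n$. Rotate it slightly to some $M\ni u_i$ with $d_{G_{d,\ell}}(L,M)\lesssim\kappa h_n/a_n$. Apply Lemma~\ref{lem:projection-volume-lipschitz} to both bodies, which have diameter $\lesssim a_n$ by \eqref{eq:simplices-diam}. The resulting error is $\lesssim a_n^\ell\kappa h_n/a_n=\kappa a_n^{\ell-1}h_n$, which is at most half the main term once $\kappa=\kappa(d)$ is small.

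\textbf{Step 4 (integration).} Integrate over this set of $L$ using Lemma~\ref{lem:small-angles-measure} with $t=\kappa h_n/a_n$, which is below $c_1$ for large $n$ by Lemma~\ref{lem:relations}. This gives
\[
X(z)-X(z')\gtrsim a_n^{\ell-1}h_n\,(h_n/a_n)^{d-\ell}.
\]
Squaring and inserting the result into Step 1 yields the proposition.
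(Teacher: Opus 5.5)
Apart from one step, your outline is the paper's proof. The shared parts are conditioning on the tangential component, the nesting $\conv\{z',F_i\}\subseteq\conv\{z,F_i\}$ along the vertical line through $F_i$, Kubota's formula restricted to a set of subspaces of measure $\gtrsim (h_n/a_n)^{d-\ell}$, and Lemma~\ref{lem:projection-volume-lipschitz} to move off the exact subspaces.

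\textbf{Where it diverges: Step 3.} You anchor at subspaces $L\ni u_i$. The paper anchors at $L_0\ni N_{F_i}=u_i-\nabla g$, the normal of ${\rm aff}(F_i)$. For such $L_0$, $P_{L_0}F_i$ lies in an $(\ell-1)$-dimensional flat, so $P_{L_0}\conv\{z_j,F_i\}$ is an honest pyramid and the gap is exactly $\frac1\ell\vol_{\ell-1}(P_{M_0}F_i)(t_1-t_2)/\|N_{F_i}\|_2$ (Claim~\ref{claim:subspaces-containing-NF}).

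For $L\ni u_i$, $P_LF_i$ is a full $\ell$-dimensional body. Its extent in the $u_i$-direction can be of order $b_2h_n$, because the heights of the $z_i^j$ may lie anywhere in $[0,b_2h_n]$. That is the same order as your displacement $t\le b_2h_n$. So ``essentially a pyramid, the tilt is only $O(b_2h_n)$, take $b_2$ small'' does not close the step. Replacing each of the two hulls separately by a flat-based pyramid costs $O(a_n^{\ell-1}b_2h_n)$, as much as the gap you want to detect, and shrinking $b_2$ shrinks both by the same factor. For the same reason you cannot transfer the paper's exact computation from $n_{F_i}$ to $u_i$ via Lemma~\ref{lem:projection-volume-lipschitz}. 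The angle $\angle(u_i,n_{F_i})\approx\|\nabla g\|_2$ can be of order $b_2h_n/a_n$, so the Lipschitz error $a_n^\ell\angle(u_i,n_{F_i})$ is again comparable to the gap.

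\textbf{How to close it.} The claim of Step 3 is still true, and in your setting it has a short proof that ignores the tilt. What you must use is that the apex sits at height about $h_n$, far above $F_i$, rather than comparing the tilt with $t$.

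Put $\eta(x)=\langle x,u_i\rangle-r_n$, $t_z=\eta(z)$ and $t_{z'}=\eta(z')$. Since $u_i\in L$, you have $\eta(P_Lx)=\eta(x)$. Let $K'=P_L\conv\{z',F_i\}$ and $K=P_L\conv\{z,F_i\}$.
\begin{itemize}
\item Because $0\le \eta\le b_2h_n<t_{z'}$ on $F_i$, $K'$ lies in $\{\eta\le t_{z'}\}$, while $K$ lies in $\{\eta\ge 0\}$.
\item Set $\rho=(t_z-t_{z'})/t_z\ge b_2/3$. The homothetic copy $P_Lz+\rho(K-P_Lz)\subseteq K$ lies in $\{\eta\ge t_{z'}\}$, so it meets $K'$ only in a null set.
\item Combined with $K'\subseteq K$, this gives $\vol_\ell(K)-\vol_\ell(K')\ge \rho^\ell\vol_\ell(K)$.
\item Moreover $\vol_\ell(K)\gtrsim a_n^{\ell-1}h_n$. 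Indeed, $P_{L\cap u_i^\perp}F_i$ contains an $(\ell-1)$-ball of radius $\gtrsim a_n$ whose centre is within $O(b_2a_n)$ of $P_{L\cap u_i^\perp}z$ (Claim~\ref{claim:ball-inside-v}). Also, $P_Lz$ sits at height $\ge(1-b_2)h_n$ over a base of height $\le b_2h_n$.
\end{itemize}
This yields a gap $\gtrsim_{b_2}a_n^{\ell-1}h_n$, which is all you need since $b_2=b_2(d)$. With this fix, or by switching to $n_{F_i}$ as the paper does, the rest of your argument goes through unchanged.
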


\begin{proof}

   The proof will be based on a series of claims. Set $W=\conv\{w_1,\ldots,w_d\}$.
    A quick calculation shows that we can write \begin{equation}\label{eq:delta0-alternative-writing}
    \Delta_i^0=\left\{y_i+tu_i+p: \, (1-b_2)h_n \leq t \leq h_n, \, p \in \left(1-\frac{t}{h_n}\right)W\right\}.
    \end{equation}This means that for some $p \in b_2W$, the vertical fiber of $\Delta_i^0$ above $p$ is \[J(p)=[(1-b_2)h_n,(1-p_W(p))\,h_n],\] where $p_W$ is the Minkowski functional of $W$. In particular, if $p \in \frac{b_2}{2}W$, then \begin{equation}\label{eq:fiber-length}
      \left[ (1-b_2)h_n, \left(1-\frac{b_2}{2}\right)h_n \right] \subseteq J(p) 
    \end{equation}
     In a similar way, for every $j=1,\ldots,d$ we can write \[z_i^j=y_i+s_ju_i+v_j,\] where \begin{equation}\label{eq:w-v-distances}
     0 \leq s_j \leq b_2h_n, \quad v_j \in u_i^\perp, \quad \|w_j-v_j\|_2 \leq 2b_2ca_n.
     \end{equation}

     \begin{claim}\label{claim:affine-independence}
         If $b_2$ is chosen sufficiently small, then the vectors $v_1,\ldots,v_d$ are affinely independent.
     \end{claim}

     \begin{proof}
         Define the linear map \[T: \mathbb{R}^{d-1} \to u_i^\perp, \quad Tx=\sum_{j=1}^{d-1}(w_j-w_d)x_j,\] which is invertible as $w_1,\ldots,w_d$ are affinely independent. Since $\|w_j\|_2=ca_n$ for every $j=1,\ldots d$, and they form a regular $(d-1)$- simplex, centered at $0$, one can see that \begin{equation}\label{eq:M-norms}
         \|T\|=\frac{ca_nd}{\sqrt{d-1}}, \quad \|T^{-1}\|=\frac{1}{ca_n}\sqrt{\frac{d}{d-1}}.
         \end{equation}
         Let also \[\Gamma: \mathbb{R}^{d-1} \to u_i^\perp, \quad \Gamma x=\sum_{j=1}^{d-1}(v_j-v_d)x_j.\] Observe that by \eqref{eq:w-v-distances} we get \[\|T-\Gamma\| \leq 4b_2ca_n \sqrt{d-1}\]  Then for every $x \in S^{d-2}$ we have 
         \begin{equation}\label{eq:N-norm-inverse}
         \|\Gamma x\|_2 \geq  \|Tx\|_2-\|(\Gamma-T)x\|_2 \geq ca_n\sqrt{\frac{d-1}{d}}-4b_2ca_n\sqrt{d-1} \geq ca_n \sqrt{\frac{d-1}{2d}},
         \end{equation} if $b_2$ is chosen sufficiently small. Therefore $\Gamma$ is invertible and \begin{equation}\label{eq:N-inverse-norm}
             \|\Gamma^{-1}\| \leq \frac{1}{ca_n} \sqrt{\frac{2d}{d-1}}.
         \end{equation} The claim follows.
     \end{proof}

In fact, we can show something stronger, but we will need again the previous calculations, which makes the last proof useful.

     \begin{claim}\label{claim:ball-inside-v}
     If $b_2$ is chosen small enough, then \[\frac{ca_n}{2(d-1)} B_{u_i^\perp} \subseteq\conv\{v_1,\ldots,v_d\}\]     
     \end{claim}

     \begin{proof}
         By Proposition~\ref{prop:main-properties} ${\rm (vii)}$, we have that \[ \frac{ca_n}{d-1} B_{u_i^\perp} \subseteq W.\] Then, applying \eqref{eq:w-v-distances}, for every $\theta \in S_{u_i^\perp}$ and every $1 \leq j \leq d$ we have that 
         \[\langle v_j,\theta\rangle \geq \langle w_j,\theta\rangle-2b_2ca_n.\] Taking maximum over $1 \leq j \leq d$ yields \[h_{\conv\{v_1,\ldots,v_d\}}(\theta) \geq \frac{ca_n}{d-1}-2b_2ca_n \geq \frac{ca_n}{2(d-1)},\] if $b_2$ is chosen small enough.
     \end{proof}

     So by Claim~\ref{claim:affine-independence} and \eqref{eq:w-v-distances} we get that \[u_i^\perp ={\rm span}\{v_1-v_d,\ldots,v_{d-1}-v_d\}.\] In fact, since $v_d \in u_i^\perp$, we also have that \[u_i^\perp={\rm aff}\{v_1,\ldots,v_d\}\] Set now \[H_{F_i}={\rm aff}(F_i).\] Notice that $P_{u_i^\perp}(z_i^j)=v_j$ for every $j=1,\ldots,d$. Therefore, \[P_{u_i^\perp}:H_{F_i} \to u_i^\perp \] is an affine bijection. So, there exists unique affine function \[g: u_i^\perp \to \mathbb{R},\] such that \begin{equation}\label{eq:HF-writing}    
     H_{F_i}=\{y_i+p+g(p)u_i: p \in u_i^\perp\}.
     \end{equation}In particular, \[g(v_j)=s_j, \quad j=1,\ldots d.\] Since $g$ is affine, it can be written as \[g(p)=\langle \nabla g,p \rangle+b,\] where $\nabla g \in u_i^\perp$.

     \begin{claim}\label{claim:nabla-upper-bound}
         We have that \[\|\nabla g \|_2 \leq C(d)b_2 \frac{h_n}{ca_n},\] where $C(d)>0$.
     \end{claim}

     \begin{proof}
         For every $j=1,\ldots,d-1$ we have that \[\langle \nabla g, v_j-v_d\rangle=g(v_j)-g(v_d)=s_j-s_d.\] Hence, if \[\tilde{s}=(s_1-s_d,\ldots,s_{d-1}-s_d) \in \mathbb{R}^{d-1},\] the above equations can be written as \[\Gamma^\ast \nabla g=\tilde{s},\] where $\Gamma^\ast: u_i^\perp \to \mathbb{R}^{d-1}$ is the adjoint of $\Gamma$, in the proof of Claim~\ref{claim:affine-independence}. Using \eqref{eq:w-v-distances}, we obtain \[\|\tilde{s}\|_2 \leq b_2h_n\sqrt{d-1}.\] Hence by \eqref{eq:N-inverse-norm} \[\|\nabla g\|_2 =\|(\Gamma^\ast)^{-1}\tilde{s}\|_2 \leq \|(\Gamma^\ast)^{-1}\| \,\|\tilde{s}\|_2 \leq \frac{b_2h_n}{ca_n}\sqrt{2d}, \] which concludes the claim.
     \end{proof}

     \begin{claim}\label{claim:sign-of-difference}
     Let $z=y_i+p+tu_i \in \Delta_i^0$, as in \eqref{eq:delta0-alternative-writing}. If $b_2$ is sufficiently small, \[t-g(p)>0.\]       
     \end{claim}

     \begin{proof}
         By \eqref{eq:delta0-alternative-writing} we have that \[t \geq h_n(1-b_2), \quad \|p\|_2 \leq b_2ca_n.\] Recall that $g(p)=b+\langle \nabla g, p\rangle$ and $0 \in \conv\{v_1,\ldots,v_d\}$ by Claim~\ref{claim:ball-inside-v}. Consequently,  \[b=g(0) \leq \max_{1 \leq j \leq d} g(v_j) =\max_{1 \leq j \leq d} s_j \leq b_2h_n,\] where we used that $g(v_j)=s_j$ and \eqref{eq:w-v-distances}. Thus, by Claim~\ref{claim:nabla-upper-bound} \[t-g(p) \geq h_n(1-b_2)-b_2h_n-C(d)b_2^2h_n=(1-2b_2-C(d)b_2^2\,)\,h_n>0,\] if $b_2$ is chosen small enough.
     \end{proof}

     Set now $N_{F_i}=u_i-\nabla g.$ Let two arbitrary points of $H_{F_i}$ \[x_1=y_i+p_1+g(p_1)u_i, \quad x_2=y_i+p_2+g(p_2)u_i,\] where $p_1,p_2 \in u_i^\perp$. Then \[x_1-x_2=p_1-p_2+\langle \nabla g,p_1-p_2\rangle u_i.\] Therefore, as $\nabla g \in u_i^\perp$, \[\langle N_{F_i}, x_1-x_2\rangle=0,\] which means that $N_{F_i}$ is perpendicular to $H_{F_i}$. 

     \begin{claim}\label{claim:ball-in-F}
         There exists some $x \in F_i$ such that \[B_{H_{F_i}}\left(x,\frac{ca_n}{2(d-1)}\right) \subseteq F_i. \] 
     \end{claim}

     \begin{proof}
         We define \[\Phi: u_i^\perp \to H_{F_i}, \quad \Phi(p)=y_i+p+g(p)u_i,\] which is clearly surjective. As $g$ is affine and $\Phi(v_j)=z_i^j$ for every $1 \leq j \leq d$, we can deduce that  \[\Phi(\conv\{v_1,\ldots,v_d\})=F_i.\] Write now \[\Phi(p)=\Phi(0)+T_gp, \quad T_gp=p+\langle \nabla g,p\rangle u_i, \quad p\in u_i^\perp.\] Then for every $p \in u_i^\perp$, \[ \|T_gp\|_2^2=\|p\|_2^2+\langle \nabla g , p\rangle ^2 \geq \|p\|_2^2.\] We are now able to show that \[B_{H_{F_i}}\left(\Phi(0),\frac{ca_n}{2(d-1)}\right) \subseteq \conv\{v_1,\ldots,v_d\}.\] Indeed, as $\Phi$ is surjective, let $\Phi(p) \in B_{H_{F_i}}\bigl(\Phi(0),\frac{ca_n}{2(d-1)}\bigr)$ for some $p \in u_i^\perp$. Then \[\frac{ca_n}{2(d-1)} \geq\|\Phi(p)-\Phi(0)\|_2=\|T_gp\|_2 \geq \|p\|_2,\] which implies that $p \in \frac{ca_n}{2(d-1)} B_{u_i^\perp}$. By Claim~\ref{claim:ball-inside-v} we get that  $p \in \conv\{v_1,\ldots,v_d\}$ and thus \[\Phi(p) \in F_i.\]
     \end{proof}

     Fix now \[p \in \frac{b_2}{2}W.\]

     \begin{claim}\label{claim:p-line-intersection}
         If $b_2$ is sufficiently small, then the vertical line $y_i+p+\mathbb{R}u_i$ intersects $H_{F_i}$ at $\Phi(p) \in F_i$. Moreover, \[\langle \Phi(p), u_i \rangle < \min_{x \in \Delta_i^0} \langle x, u_i\rangle, \] which means that $\Phi(p)$ lie strictly below $\Delta_i^0$, for every $ p \in \frac{b_2}{2}W$.
     \end{claim}

     \begin{proof}
     Because of \eqref{eq:HF-writing}, the line intersects $H_{F_i}$ at \[\Phi(p)=y_i+p+g(p)u_i.\] We have to show that $\Phi(p) \in F_i$. It suffices to prove that $p \in \conv\{v_1,\ldots,v_d\}$. Indeed, if $b_2$ is sufficiently small, as $\|w_j\|_2=ca_n$ for every $1 \leq j \leq d$, we have that \[p \in \frac{b_2}{2}W  \subseteq \frac{b_2ca_n}{2}B_{u_i^\perp} \subseteq \frac{ca_n}{2(d-1)}B_{u_i^\perp} \subseteq \conv\{v_1,\ldots,v_d\}\] by Claim~\ref{claim:ball-inside-v}.

     For the second assertion, since $g$ is affine and $p \in \conv\{v_1,\ldots,v_d\}$, using \eqref{eq:w-v-distances}, we have \[\langle \Phi(p),u_i \rangle=r_n+ \langle \Phi(p)-y_i,u_i \rangle =r_n+g(p) \leq r_n+ \max_{1 \leq j \leq d} s_j \leq r_n+ b_2h_n,\] while on the other hand
     \[\min_{x \in \Delta_i^0} \langle x, u_i \rangle =(1-b_2)(r_n+h_n)+b_2\min_{0 \leq j \leq d} \langle y_i^j,u_i\rangle=r_n+(1-b_2)h_n.\] The claim follows, since $b_2<1/2$.        
     \end{proof}

     Set now \[I_1(p)= \left[ \left(1-\frac{5b_2}{8}\right)h_n, \left(1-\frac{b_2}{2}\right)h_n \right].\]
     and
     \[I_2(p)= \left[ \left(1-\frac{7b_2}{8}\right)h_n , \left(1-\frac{3b_2}{4}\right)h_n \right]\] 
       Observe by \eqref{eq:fiber-length} that we have  \[I_1(p),I_2(p) \subseteq J(p).\]

     Let also $t_1 \in I_1(p)$ and $t_2 \in I_2(p).$ Then \begin{equation}\label{eq:tj-difference}
      t_1-t_2 \geq \frac{b_2}{8}h_n.  
     \end{equation}
     In addition, set \[z_1=y_i+p+t_1u_i, \quad z_2=y_i+p+t_2u_i.\]

    We note that by the deinition of $J(p)$ \[z_1,z_2 \in \Delta_i^0.\]

     \begin{claim}\label{claim:convex-hull-inclusion}
         We have that \[\conv\{z_2,F_i\}\subseteq \conv\{z_1,F_i\}.\] In particular, if  $L \in G_{d,\ell}$, then \[\vol_{\ell}(P_L \conv\{z_1,F_i\}) \geq \vol_{\ell}(P_L \conv\{z_2,F_i\}).\]
     \end{claim}

     \begin{proof}
         Since $z_2 \in \Delta_i^0$, by Claim~\ref{claim:p-line-intersection} we can write \[\Phi(p)=z_2-su_i,\] for some $s>0$. Moreover, \[z_1=z_2+(t_1-t_2)u_i.\] Solving for $z_2$ we get \[z_2=\frac{s}{s+t_1-t_2}\,z_1+\frac{t_1-t_2}{s+t_1-t_2}\, \Phi(p),\] as a convex combination. Therefore, by Claim~\ref{claim:p-line-intersection} \[z_2 \in [z_1,\Phi(p)] \subseteq \conv\{z_1,F_i\},\] which concludes the proof.
     \end{proof}

     \begin{claim}\label{claim:subspaces-containing-NF}
        Let $L_0 \in G_{d, \ell}$ that contains $N_{F_i}$. Then \[\vol_{\ell}(P_{L_0} \conv\{z_1,F_i\}) - \vol_{\ell}(P_{L_0} \conv\{z_2,F_i\}) \gtrsim h_na_n^{\ell-1}.\]
     \end{claim}

     \begin{proof}
         Set \[M_0= L_0 \cap N_{F_i}^\perp \in G_{d,\ell-1}.\] We can decompose as \[L_0=M_0 \oplus {\rm span} \{N_{F_i}\}.\] Since $N_{F_i}$ is perpendicular to $H_{F_i}$, we can write \[H_{F_i}=\{y: \langle y, N_{F_i} \rangle =s\}\] for some $s \in \mathbb{R}$. Let now $x \in F_i$. Then, as $N_{F_i} \in L_0$, \[ \langle P_{L_0}x, N_{F_i} \rangle= \langle x, N_{F_i} \rangle =s.\] So we have that \[P_{L_0}F_i \subseteq  H_0:= H_{F_i} \cap L_0,\] where $H_0$ is an $(\ell-1)$-dimensional affine hyperplane. In addition, \[P_{L_0}x=P_{M_0}x+ s\,N_{F_i},\] which means that \[P_{L_0}F_i=P_{M_0}F_i+s\,N_{F_i}.\] Therefore, \[\vol_{\ell-1}(P_{L_0}F_i) =\vol_{\ell-1}(P_{M_0}F_i).\] For $j=1,2$ we have that \[P_{L_0}\conv\{z_j,F_i\}=\conv\{P_{L_0}z_j,P_{L_0}F_i\}.\] So $P_{L_0}\conv\{z_j,F_i\}$ is an $\ell$-dimensional pyramid, where $P_{L_0}F_i \subseteq H_0$ is its $(\ell-1)$-dimensional base and $P_{L_0}z_j$ its apex. Therefore, \[\vol_\ell(P_{L_0}\conv\{z_j,F_i\})=\frac{1}{\ell}\vol_{\ell-1}(P_{L_0}F_i)\, \dist(P_{L_0}z_j,H_0) .\] Now \[\dist(P_{L_0}z_j,H_0)= \frac{|\langle z_j,N_{F_i}\rangle-s|}{\|N_{F_i}\|_2}=\dist(z_j,H_{F_i}),\] as \[\langle P_{L_0} z_j, N_{F_i} \rangle =\langle z_j, N_{F_i}\rangle.\] Thus, 
         \begin{equation}\label{eq:pyramid-volume}    
         \vol_\ell(P_{L_0}\conv\{z_j,F_i\})=\frac{1}{\ell} \vol_{\ell-1}(P_{M_0}F_i) \frac{|\langle z_j,N_{F_i}\rangle-s|}{\|N_{F_i}\|_2}.
          \end{equation}
         Now, since $z_j \in \Delta_i^0$ and $\Phi(p) \in H_{F_i}$ we have that \begin{equation}\label{eq:inner-product-with-normal}\langle z_j,N_{F_i} \rangle -s= \langle z_j-\Phi(p), N_{F_i} \rangle=\langle (t_j-g(p))u_i,u_i-\nabla g\rangle=t_j-g(p)>0,
         \end{equation} by Claim~\ref{claim:sign-of-difference}. Therefore, by \eqref{eq:pyramid-volume} and \eqref{eq:inner-product-with-normal} \[\vol_{\ell}(P_{L_0} \conv\{z_1,F_i\}) - \vol_{\ell}(P_{L_0} \conv\{z_2,F_i\})=\frac{1}{\ell} \vol_{\ell-1}(P_{M_0}F_i) \frac{t_1-t_2}{\|N_{F_i}\|_2}.\] Now Claim~\ref{claim:ball-in-F} implies that \[\vol_{\ell-1}(P_{M_0}F_i)\geq \omega_{\ell-1}\left(\frac{ca_n}{2(d-1)}\right)^{\ell-1},\] while by applying Claim~\ref{claim:nabla-upper-bound}, we obtain \[\|N_{F_i}\|_2 =\sqrt{1+\|\nabla g\|_2^2 } \leq C_0(d),\] because $b_2=b_2(d)$ and $(h_n/a_n)_{n \in \mathbb{N}}$ is a bounded sequence. Combining with \eqref{eq:tj-difference}, we get the claim.
     \end{proof}

     For simplicity, we set \[n_{F_i}=\frac{N_{F_i}}{\|N_{F_i}\|_2}.\]

      Let also $\varepsilon=\varepsilon(d,\ell)>0$, which will be chosen sufficiently small. We define \[\mathcal{G}=\left\{ L \in G_{d,\ell}: \angle(n_{F_i},L) \leq  \frac{\varepsilon h_n}{a_n}\right\}.\] Observe that since the sequence $(h_n/a_n)_{n \in \mathbb{N}}$ is bounded, we can choose $\varepsilon>0$ sufficiently small, such that Lemma~\ref{lem:small-angles-measure} holds true for $\mathcal{G}$. In particular, \begin{equation}\label{eq:G-lower-bound}
     \nu_{d,\ell}(\mathcal{G}) \gtrsim \left(\frac{h_n}{a_n}\right)^{d-\ell}.
     \end{equation}

     \begin{claim}\label{claim:subspaces-near-NF}
         If $\varepsilon$ is chosen sufficiently small, then for every $L \in \mathcal{G}$ we have that \[\vol_{\ell}(P_{L} \conv\{z_1,F_i\}) - \vol_{\ell}(P_{L} \conv\{z_2,F_i\}) \gtrsim a_n^{\ell-1}h_n\]
     \end{claim}

     \begin{proof}
     Let $L \in \mathcal{G}$ and set \[\theta=\angle(n_{F_i},L).\] By the proof of Lemma~\ref{lem:angle-and-projection}, we can see that if \[e=\frac{P_Ln_{F_i}}{\|P_Ln_{F_i}\|_2} \in L,\] then $\angle(n_{F_i},e)=\theta$. Let also \[M:= {\rm span}\{e,n_{F_i}\}.\] If we set \[w=\frac{n_{F_i}-\cos\theta \,e}{\sin\theta},\] then $\{e,w\}$ is an orthonormal basis of $M$. We define now $R: \mathbb{R}^d \to \mathbb{R}^d$ such that \[Re=\cos\theta \, e+\sin\theta \, w=n_{F_i}, \quad Rw=-\sin \theta \, e+\cos\theta \, w,\] while \[Rx=x, \quad x \in M^\perp.\] We note that $R$ is an internal rotation of $M$ by angle $\theta$. It is clear that $R \in O(d)$. We set \[L_0 =RL,\] which is an $\ell$-dimensional subspace that contains $n_{F_i}$. We also have \[P_{L_0}-P_L=RP_LR^T-P_L=(R-I_n)P_LR^T+P_L(R^T-I_n).\] Therefore, \[d_{G_{d,\ell}}(L,L_0)=\|P_L-P_{L_0}\| \leq \|R-I_n\|+\|R^T-I_n\|.\] Notice now that if $x \in M$ \[\langle Rx,x\rangle =\cos\theta\,\|x\|_2^2,\] which implies that \[\|(R-I_n)x\|_2^2=4 \sin^2\frac{\theta}{2}\,\|x\|_2^2.\] Therefore, \[\|R-I_n\|=2\sin\frac{\theta}{2}.\] Moreover, since $R \in O(d)$ \[\|R^T-I_n\| =\|I_n-R\|=2\sin\frac{\theta}{2},\] which means that \[d_{G_{d,\ell}}(L,L_0) \leq 2\theta.\] For $j=1,2$, we have that \[\diam(\conv\{z_j,F_i\}) \lesssim a_n.\] Indeed, this follows by the fact that $\conv\{z_j,F_i\} \subseteq \Delta_i$ and \eqref{eq:simplices-diam}. Applying Lemma~\ref{lem:projection-volume-lipschitz}, we obtain \begin{equation}\label{eq:volumes-near-subspaces}|\vol_{\ell}(P_L \conv\{z_j,F_i\})-\vol_{\ell}(P_{L_0} \conv\{z_j,F_i\})| \leq C(d,\ell)\, a_n^{\ell}\,d_{G_{d,\ell}}(L,L_0) \leq C_1(d,\ell)\, a_n^\ell \,\theta.
     \end{equation} Thus, by \eqref{eq:volumes-near-subspaces} and Claim~\ref{claim:subspaces-containing-NF}
     \begin{equation*}
         \vol_{\ell}(P_{L} \conv\{z_1,F_i\}) - \vol_{\ell}(P_{L} \conv\{z_2,F_i\})\geq C_2(d,\ell\,)h_na_n^{\ell-1}-2C_1(d,\ell)\,\theta\, a_n^\ell \geq C_3(d,\ell)\,h_na_n^{\ell-1},
     \end{equation*}
         if $\varepsilon$ is chosen sufficiently small.
     \end{proof}

     We now fix $b_1$ to be large enough and $b_2,\varepsilon$ to be sufficiently small, such that all previous Claims and Lemmas hold.

\begin{claim}\label{claim:difference-integral-functionals}
   We have that \[V_\ell(\conv\{z_1,F_i\})-V_\ell(\conv\{z_2,F_i\}) \gtrsim a_n^{2\ell-d-1}h_n^{d-\ell+1}.
    \]
\end{claim}

\begin{proof}
Using Kubota's formula from \eqref{eq:intrinsic-kubota-formula} and Claim~\ref{claim:convex-hull-inclusion}, we have that 
\begin{align*}
&V_\ell(\conv\{z_1,F_i\})-V_\ell(\conv\{z_2,F_i\})\\
        &\hspace*{1cm}= c_{d,\ell}\int_{G_{d,\ell}} \vol_\ell(P_L\conv\{z_1,F_i\})-\vol_\ell(P_L\conv\{z_2,F_i\})\, d\nu_{d,\ell}(L)\\
        &\hspace*{1cm}\gtrsim \int_{\mathcal{G}} \vol_\ell(P_L\conv\{z_1,F_i\})-\vol_\ell(P_L\conv\{z_2,F_i\})\, d\nu_{d,\ell}(L).       
    \end{align*} 
Applying Claim~\ref{claim:subspaces-near-NF} and \eqref{eq:G-lower-bound}, concludes the proof.
\end{proof}

Let now $Z\sim \mu_{\Delta_i^0}$. Then by \eqref{eq:delta0-alternative-writing} we can write \[Z=y_i+P+Tu_i,\] where \[P=P_{u_i^\perp}(Z-y_i), \quad T=\langle Z-y_i,u_i\rangle\] are the tangential and radial components respectively. We condition on \[P=p \in \frac{b_2}{2}W.\]
Therefore, as we have seen, the possible values of $T$ form the interval \[J(p)=[(1-b_2)h_n,(1-p_W(p))\,h_n].\] The conditional density of $T$, given that $P=p$ is \[f_p(t)=\frac{f_\mu(y_i+p+tu_i)}{\int_{J(p)}f_\mu(y_i+p+su_i)\, ds} \mathds{1}_{J(p)}(t).\] So, if $I \subseteq J(p)$, in view of \eqref{eq:simplices-f-asymptotics}, we have that \[\mathbb{P}(T \in I\mid P=p) \geq \frac{e^{-1}f_\mu(r_n)|I|}{f_\mu(r_n)|J(p)|}=e^{-1}\frac{|I|}{|J(p)|}.\] We remind that $I_1(p),I_2(p) \subseteq J(p)$ and \[|I_j(p)|=\frac{b_2}{8}h_n, \quad j=1,2,\] as well as \[|J(p)|=(b_2-p_W(p))h_n \leq b_2h_n.\] Hence \begin{equation}\label{eq:probability-of-interval}
\mathbb{P}(T \in I_j(p)\mid P=p) \geq \frac{e^{-1}}{8}.
\end{equation}
For fixed $p \in \frac{b_2}{2}W$, we set \[g_p(t)=V_\ell(\conv\{y_i+p+tu_i,F_i\}).\] By Claim~\ref{claim:difference-integral-functionals}, if $t_1 \in I_1(p)$ and $t_2 \in I_2(p)$, we get \[|g_p(t_1)-g_p(t_2)|^2 \gtrsim \delta_{n,d,\ell}^2,\] where \[ \delta_{n,d,\ell}=a_n^{2\ell-d-1} h_n^{d-\ell+1}\] Let also $T'$ be an independent copy of $T$, conditionally on $P=p$. Thus,
\begin{equation}\label{eq:conditional-variance}
\begin{aligned}
        \Var\bigl(g_p(T)\mid P=p\bigr)&=\frac{1}{2} \mathbb{E}\!\left [\bigl(g_p(T)-g_p(T')\bigr)^2\mid P=p\right] \geq \\
    &\geq \frac{1}{2}\delta_{n,d,\ell}^2\, \mathbb{P}\bigl(T \in I_1(p), T' \in I_2(p)\mid P=p\bigr) \geq\\
    &\geq  C \,\delta_{n,d,\ell}^2,
\end{aligned}
\end{equation}
where we also used \eqref{eq:probability-of-interval}. We will also need the following:

\begin{claim}\label{claim:W-positive-probability}
There exists a constant $c_d>0$ such that \[\mathbb{P} \left(P\in \frac{b_2}{2}W\right) \geq c_d. \]    
\end{claim}

\begin{proof}
     Set \[S=\biggl\{y_i+p+tu_i: p \in \frac{b_2}{2}W, t \in J(p)\biggr\}\] and notice that $P\in \frac{b_2}{2}W$ if and only if $Z \in S$. Consequently, using \eqref{eq:simplices-f-asymptotics}  again,\[\mathbb{P} \left(P\in \frac{b_2}{2}W\right)=\mathbb{P}(Z \in S)=\frac{\mu(S)}{\mu(\Delta_i^0)} \geq e^{-1}\frac{\vol_d(S)}{\vol_d(\Delta_i^0)}.\] By Fubini we have \[
     \vol_d(S)=\int_{\frac{b_2}{2}W} |J(p)| \, dp \geq \left(\frac{b_2}{2} \right)^d\vol_{d-1}(W)h_n.  
     \] Now, since \[\frac{ca_n}{d-1}B_{u_i^\perp} \subseteq W,\] we get \[\vol_{d}(S) \geq C_1(d) a_n^{d-1}h_n.\] Combining with \eqref{eq:simplices-volume-asymptotics} concludes the proof.
\end{proof}

 Set now \[Y= V_\ell(\conv\{Z,F_i\}).\] The law of total variance gives \begin{align*}
 \Var(Y) &=\mathbb{E}\bigl[\Var(Y \mid P)\bigr]+\Var\bigl(\mathbb{E}[Y\mid P]\bigr) \\
 &\geq \mathbb{E}\bigl[\Var(Y\mid P)\bigr] \geq \mathbb{E}\bigl[\Var(Y\mid P) \, \mathds{1}_{\{P \in \frac{b_2}{2}W\}}\bigr] \\
 &\geq C\, \delta_{n,d,\ell}^2\, \mathbb{P}\left(P \in \frac{b_2}{2}W \right),
 \end{align*} where we used \eqref{eq:conditional-variance}. Apply Claim~\ref{claim:W-positive-probability} and the proof is complete.
\end{proof}

So we have now established the main ingredient of the proof. However, before we proceed, we need some more geometric lemmas.

\begin{lemma}\label{lem:summability-possible}
    Let $R_N=\{x_1,\ldots,x_N\} \subseteq \mathbb{R}^d$ and denote by $K_N$ the convex hull of $R_N$. Let also $i \in \{1,\ldots,m\}$. Assume that exactly one point of $R_N$, say $z_i^j$ lies in $\Delta_i^j, \, j=0,\ldots,d$ and no further point lies in $\widetilde{H}_i$. If $F_i=\conv\{z_i^1,\ldots,z_i^d\}$ and $Q_i=\conv(R_N\setminus\{z_i^0\})$, then \[K_N=Q_i \cup \conv\{z_i^0,F_i\}, \quad Q_i \cap \conv\{z_i^0,F_i\}=F_i. \]
\end{lemma}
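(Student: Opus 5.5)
The plan is to reduce both claims to the shadow lemma, Lemma~\ref{lem:shadow-lemma}, together with the barycentric description of the simplex $\conv\{z_i^0,\ldots,z_i^d\}$. Throughout we write $\beta(x)$ for the barycentric vector of $x$ with respect to $z_i^0,\ldots,z_i^d$; these points are affinely independent by Lemma~\ref{lem:z-affine-independence}.

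\textbf{Step 1: every point other than $z_i^0$ lies in the shadow.} By hypothesis, every point of $R_N\setminus\{z_i^0,\ldots,z_i^d\}$ lies outside $\widetilde H_i$. Lemma~\ref{lem:shadow-lemma} then gives $R_N\setminus\{z_i^0,\ldots,z_i^d\}\subseteq S(z_i^0,F_i)$. The points $z_i^1,\ldots,z_i^d$ lie in $F_i\subseteq S(z_i^0,F_i)$ (take $t=1$). Since $S(z_i^0,F_i)$ is convex, we get
\[
Q_i\subseteq S(z_i^0,F_i)=\{x:\beta_0(x)\le 0,\ \beta_j(x)\ge 0,\ j=1,\ldots,d\},
\]
using \eqref{eq:shadow-barycentric}. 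Also $F_i\subseteq Q_i$.

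\textbf{Step 2: the intersection is $F_i$.} Let $x\in Q_i\cap\conv\{z_i^0,F_i\}$. Membership in the simplex gives $\beta_0(x)\ge 0$, while membership in $Q_i$ gives $\beta_0(x)\le0$. Hence $\beta_0(x)=0$ and $\beta_j(x)\ge0$, so $x\in F_i$. The reverse inclusion $F_i\subseteq Q_i\cap\conv\{z_i^0,F_i\}$ is immediate.

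\textbf{Step 3: the union is $K_N$.} Clearly $Q_i\cup\conv\{z_i^0,F_i\}\subseteq K_N$, and $K_N=\conv\bigl(Q_i\cup\{z_i^0\}\bigr)$. So it suffices to show that the union is convex, or equivalently that every segment $[z_i^0,q]$ with $q\in Q_i$ lies in the union. (A general point of $K_N$ has the form $(1-\lambda)z_i^0+\lambda q$, so the segment argument covers everything.)

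Take $q\in Q_i$, so $\beta_0(q)\le0$ and $\beta_j(q)\ge0$. If $\beta_0(q)=0$ there is nothing to do. Otherwise, along the segment $x_\lambda=(1-\lambda)z_i^0+\lambda q$, the coordinate $\beta_0$ decreases affinely from $1$ to $\beta_0(q)<0$, while each $\beta_j$ for $j\geq 1$ stays nonnegative. Hence there is a $\lambda^*$ with $\beta_0(x_{\lambda^*})=0$, and then $x_{\lambda^*}\in F_i$. The part $[z_i^0,x_{\lambda^*}]$ lies in $\conv\{z_i^0,F_i\}$.

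It remains to show that $[x_{\lambda^*},q]\subseteq Q_i$. Both endpoints lie in $Q_i$, because $F_i\subseteq Q_i$, and $Q_i$ is convex, so this holds. This step is where the main care is needed: it uses only the facts that $F_i\subseteq Q_i$ and that $Q_i$ is convex, so it goes through. Therefore $K_N=Q_i\cup\conv\{z_i^0,F_i\}$.

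\textbf{Main obstacle.} The only nontrivial input is Step 1, namely that the points outside $\widetilde H_i$ lie in the shadow cone. This is exactly what Lemma~\ref{lem:shadow-lemma} provides. The one subtlety is to check that points in $\Delta_i^j$ for $j\ge1$ are only the $z_i^j$ themselves, and the exactly-one hypothesis ensures this. Boundary points of $\widetilde H_i$ pose no issue, because the hypothesis excludes them directly.
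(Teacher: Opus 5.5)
Your proof is correct and follows essentially the same route as the paper. You first obtain $Q_i\subseteq S(z_i^0,F_i)$ from Lemma~\ref{lem:shadow-lemma} and the convexity of the shadow, then use the sign of $\beta_0$ to identify the intersection as $F_i$, and finally split each segment $[z_i^0,q]$, $q\in Q_i$, at a point of $F_i$ to obtain the union. The only cosmetic difference is that you find that splitting point by following $\beta_0$ along the segment, whereas the paper reads it off directly from the shadow representation $q=z_i^0+t(v-z_i^0)$ with $t\ge 1$ and $v\in F_i$.
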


\begin{proof}
We will first show that \begin{equation}\label{eq:shadow-inclusion}  
Q_i \subseteq S(z_i^0,F_i)
\end{equation}
By the convexity of $S(z_i^0,F_i)$ it is enough to show that every generating point $x$ of $Q_i$ belongs to $S_i(z_i^0,F_i)$. If $x=z_i^j$ for some $j=1,\ldots,d$, then \[x \in F_i \subseteq S(z_i^0,F_i).\] Assume otherwise. Then $x \in \mathbb{R}^d \setminus \widetilde{H}_i$ and Lemma~\ref{lem:shadow-lemma} implies that $x \in S(z_i^0,F_i)$.

 Now, about the equalities of the lemma, it is clear that \[Q_i \cup \conv\{z_i^0,F_i\}\subseteq K_N,\quad F_i \subseteq Q_i \cap \conv\{z_i^0,F_i\}.\]  Let  $x \in K_N$, which means that $x \in [z,z_i^0]$ for some $ z \in Q_i$. It suffices to prove that \[[z,z_i^0] \subseteq Q_i \cup \conv\{z_i^0,F_i\}.\] By \eqref{eq:shadow-inclusion}, we get that there exists some $v \in F_i$, 
 such that $v \in [z,z_i^0]$. Therefore, \[[z,z_i^0]=[z,v]\cup [v,z_i^0] \subseteq Q_i \cup \conv\{z_i^0,F_i\},\] as $F_i \subseteq Q_i$. Hence, the first equality is proven. For the second one, let $x \in Q_i \cap \conv\{z_i^0,F_i\}$. If $\beta(x) \in \mathbb{R}^{d+1}$ is the barycentric vector of $x$, with respect to  $z_i^0,\ldots,z_i^d$, then the fact that $x \in Q_i$, along with \eqref{eq:shadow-inclusion} and \eqref{eq:shadow-barycentric}, implies that \[\beta_0(x) \leq 0.\] However, $\beta_0(x) \geq 0$, as $x \in\conv\{z_i^0,F_i\}$. Therefore, $\beta_0(x)=0$, which means that $x \in F_i$. 
 \end{proof}

\begin{corollary}\label{cor:intrinsic-induction}
Let $R_N=\{x_1,\ldots,x_N\} \subseteq \mathbb{R}^d$ and denote by $K_N$ their convex hull. Let also $i \in \{1,\ldots,m\}$. Assume that exactly one of these points, say $z_i^j$ lies in $\Delta_i^j, \, j=1,\ldots,d$ and no further point lies in $\widetilde{H}_i \cup \Delta_i^0$. If $F_i=\conv\{z_i^1,\ldots,z_i^d\}$ and $z_1,z_2 \in \Delta_i^0$, then we have that
\[V_\ell(\conv\{R_N,z_1\})-V_\ell(\conv\{R_N,z_2\})=V_\ell(\conv\{z_1,F_i\})-V_\ell(\conv\{z_2,F_i\}).\]
\end{corollary}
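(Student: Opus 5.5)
The plan is to apply Lemma~\ref{lem:summability-possible} twice, once to the augmented point set $R_N\cup\{z_1\}$ and once to $R_N\cup\{z_2\}$, and then use the valuation property \eqref{eq:intrinsic-valuation}. Fix $k\in\{1,2\}$ and set $R^{(k)}:=R_N\cup\{z_k\}$. By hypothesis, exactly one point of $R^{(k)}$ lies in each $\Delta_i^j$ for $j=1,\ldots,d$, namely $z_i^j$. The point $z_k$ is the only point of $R^{(k)}$ in $\Delta_i^0$, since no point of $R_N$ lies in $\Delta_i^0$.

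It remains to check the hypothesis ``no further point lies in $\widetilde{H}_i$''. The points of $R_N$ other than $z_i^1,\ldots,z_i^d$ avoid $\widetilde{H}_i$ by assumption, so Lemma~\ref{lem:summability-possible} applies with $z_i^0:=z_k$. The only possible subtlety is that the lemma's assumption refers to points other than the $z_i^j$'s, and $z_k\in\Delta_i^0\subseteq\Delta_i\subseteq H_i^+$ is itself one of the designated points, so there is no conflict. Set
\[
Q_i=\conv(R^{(k)}\setminus\{z_k\})=\conv(R_N).
\]
Crucially, $Q_i$ does not depend on $k$. The lemma then gives
\[
\conv\{R_N,z_k\}=Q_i\cup\conv\{z_k,F_i\},\qquad Q_i\cap\conv\{z_k,F_i\}=F_i .
\]

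Next we apply the valuation identity \eqref{eq:intrinsic-valuation}. If $Q_i$ and $\conv\{z_k,F_i\}$ are convex bodies, their union is the convex body $\conv\{R_N,z_k\}$, and
\[
V_\ell(\conv\{R_N,z_k\})=V_\ell(Q_i)+V_\ell(\conv\{z_k,F_i\})-V_\ell(F_i).
\]
Here $F_i$ is only $(d-1)$-dimensional and $Q_i$ may fail to be full-dimensional, so the identity should be used in its form for compact convex sets. Intrinsic volumes extend by continuity in the Hausdorff metric to all compact convex sets, and the valuation property persists there. I would state this briefly: approximate by $K+\varepsilon B_2^d$ is not directly compatible with unions, so I would simply quote that $V_\ell$ is a valuation on the class of all compact convex sets (Schneider's book).

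Subtracting the identities for $k=1$ and $k=2$ cancels the common terms $V_\ell(Q_i)$ and $V_\ell(F_i)$. This yields exactly the claimed equality. The only step that needs care is the applicability of the valuation property to lower-dimensional sets; the rest is bookkeeping on the hypotheses of Lemma~\ref{lem:summability-possible}.
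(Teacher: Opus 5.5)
Your proposal is correct and follows the paper's proof: apply Lemma~\ref{lem:summability-possible} to $R_N\cup\{z_k\}$ for $k=1,2$ (so that $Q_i=K_N$), use the valuation identity, and subtract. Your observation that the valuation property is needed on all compact convex sets, because $F_i$ and possibly $K_N$ are lower-dimensional, addresses a point the paper passes over. Citing Schneider's book for it is the right fix.
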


\begin{proof}
    Applying twice Lemma~\ref{lem:summability-possible} for the samples $z_k, x_1,\ldots,x_N, \ k=1,2$ and using \eqref{eq:intrinsic-valuation} we obtain:
    \[V_\ell(\conv\{R_N,z_k\})= V_\ell(K_N)+V_\ell(\conv\{z_k,F_i\})-V_\ell(F_i), \quad k=1,2.
    \] Subtracting the two equations, concludes the proof.
\end{proof}

\begin{proof}[Proof of Theorem~\ref{thm:main-general-radial}]
Let $R_n=\{X_1,\ldots,X_n\}$ denote our random sample of $n$ points. Set \[\widetilde{n}_0= \max\{d+1,n_0(\mu,d)\},\] where $n_0(\mu,d)$ is the one from Lemma~\ref{lem:Hij-measure-upper-bound}. Suppose that $n \geq \widetilde{n}_0$. For $1 \leq i \leq m$ denote by $A_i$ the event that exactly one random point of $R_n$ is contained in each simplex $\Delta_i^j, \, j=0,\ldots,d$ and no further point of $R_n$ is contained in $\widetilde{H}_i$.

\begin{claim}\label{claim:A-events-lower-bound}
    There exists $c_0(\mu,d)>0$ such that for every $i=1,\ldots,m$ \[\mathbb{P}(A_i) \geq c_0(\mu,d).\]
\end{claim}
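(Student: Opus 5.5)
We compute $\mathbb{P}(A_i)$ directly as a multinomial probability. The simplices $\Delta_i^0,\ldots,\Delta_i^d$ are pairwise disjoint for $b_2$ small: in barycentric coordinates each $\Delta_i^j$ consists of points with $\lambda_j\geq 1-b_2>1/2$. All of them lie in $\Delta_i\subseteq H_i^+\subseteq\widetilde{H}_i$. Set $p_j:=\mu(\Delta_i^j)$ and $q:=\mu(\widetilde{H}_i)$. The event $A_i$ requires that, for some choice of $d+1$ distinct indices assigned to the $d+1$ simplices, those points fall in the prescribed simplices and the remaining $n-d-1$ points fall in $\mathbb{R}^d\setminus\widetilde{H}_i$. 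Since the simplices are disjoint subsets of $\widetilde{H}_i$, these configurations are disjoint events, and
\[
\mathbb{P}(A_i)=\frac{n!}{(n-d-1)!}\prod_{j=0}^d p_j\,(1-q)^{n-d-1}.
\]

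\textbf{Lower bound on $\prod_j p_j$.} By Lemma~\ref{lem:simplex-mass} and Lemma~\ref{lem:choice-tn},
\[
p_j\asymp f_\mu(r_n)a_n^{d-1}h_n=\frac{c_{\mu,d}}{n}.
\]
Hence $p_j\geq c'(\mu,d)/n$ for every $j$. Since $n\geq d+1$ we have $n!/(n-d-1)!\geq (n/(d+1))^{d+1}\cdot$const, so this factor times $\prod_j p_j$ is bounded below by a constant $c''(\mu,d)>0$.

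\textbf{Lower bound on $(1-q)^{n-d-1}$.} By the union bound, Lemma~\ref{lem:H-plus-upper-bound} and Lemma~\ref{lem:Hij-measure-upper-bound}, we get $q\leq (d+1)C(d)/n$ for $n\geq n_0(\mu,d)$. If $n$ is large enough that $q\leq 1/2$, then $\log(1-q)\geq -2q$, so
\[
(1-q)^{n-d-1}\geq e^{-2qn}\geq e^{-2(d+1)C(d)}.
\]
Multiplying the two bounds gives $\mathbb{P}(A_i)\geq c_0(\mu,d)$ for all $n\geq n_1(\mu,d)$, uniformly in $i$, because both bounds are independent of $i$ by rotational invariance.

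\textbf{Finitely many small $n$.} The main care point is the range $\widetilde{n}_0\leq n<n_1$, where $q\leq 1/2$ may fail. For each such $n$ we still have $\mathbb{P}(A_i)>0$: each $p_j>0$ since $\mu$ has full support, and $1-q>0$ since the complement of $\widetilde{H}_i$ is a nonempty open set (it contains a neighbourhood of the origin, by Lemma~\ref{lem:distance-from-origin} and $r_n>0$). Again by rotational invariance, $\mathbb{P}(A_i)$ does not depend on $i$ for fixed $n$. Taking the minimum of these finitely many positive values together with the uniform bound for $n\geq n_1$ gives $c_0(\mu,d)$.
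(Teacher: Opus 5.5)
Your proof is correct and follows the paper's argument. Both use the multinomial formula $\mathbb{P}(A_i)=\frac{n!}{(n-d-1)!}\bigl(1-\mu(\widetilde H_i)\bigr)^{n-d-1}\prod_{j=0}^d\mu(\Delta_i^j)$ together with Lemmas~\ref{lem:simplex-mass}, \ref{lem:choice-tn}, \ref{lem:H-plus-upper-bound} and \ref{lem:Hij-measure-upper-bound}. Your extra care (disjointness of the $\Delta_i^j$, the bound $\log(1-q)\ge -2q$, and handling separately the finitely many $n$ with $q>1/2$) fills in details the paper skips: the paper's factor $(1-C_2(d)/n)^{n-d-1}$ is only meaningful once $C_2(d)/n<1$.
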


\begin{proof}
Notice that by its definition, we have that
\[
\mathbb P(A_i)
=
\frac{n!}{(n-d-1)!}
\left(1-\mu(\widetilde{H}_i)\right)^{n-d-1} \prod_{j=0}^d \mu(\Delta_i^j) \] Hence,  by Lemma~\ref{lem:simplex-mass}, Lemma~\ref{lem:choice-tn} Lemma~\ref{lem:H-plus-upper-bound}, Lemma~\ref{lem:Hij-measure-upper-bound}
\[
\mathbb{P}(A_i) \geq C_1(d)\,c_{\mu,d}\,  n^{d+1}\frac1{n^{d+1}}
\left(1-\frac {C_2(d)}{n}\right)^{n-d-1}
\ge c_0(\mu,d),
\]
where $c_0(\mu,d)>0$.
\end{proof}

For every $i=1,\ldots,m$, define \[J_i:=\begin{cases}
    \{k\} &, \text{ if } A_i \text{ occurs and }  X_k \text{ is the unique point in } \Delta_i^0\\
    \emptyset &, \text{ if } A_i^c \text{ occurs}. 
\end{cases}
\] Now we define \[\mathcal{J}=\bigcup_{i=1}^{m} J_i \subseteq \{1,\ldots,n\} \] and introduce the ``masked" variables \[
Y_k=\begin{cases}
x_\ast &, k \in \mathcal{J}\\
X_k &, k \notin \mathcal{J}
\end{cases}
\quad k=1,\ldots,n,\] where $x_\ast \notin \mathbb{R}^d$. With this notation we can set \[\mathcal{F}=\sigma(J_1,\ldots,J_n,Y_1,\ldots,Y_n).\] Let also \[I = \{1 \leq i \leq m: J_i \neq \emptyset\}=\{1 \leq i \leq m: A_i \text{ occurs}\}.\] Conditioning on $\mathcal{F}$ means that $I$ is completely determined and so are all the points of $R_n$, except for the unique points in $\Delta_i^0$, say $Z_i^0$, for which $\mathds{1}_{A_i}=1$. Let now $i \in I$ and set \[D_i=R_n \setminus\{Z_i^0\}.\] Choose some $z_i^\ast \in \Delta_i^0$. Then, by Corollary~\ref{cor:intrinsic-induction},
\begin{align*}
V_\ell(K_n^\mu)&=V_\ell(\conv\{D_i,Z_i^0\})\\
&=V_\ell(\conv\{D_i,z_i^\ast\})-V_\ell(\conv\{z_i^\ast,F_i\})+V_\ell(\conv\{Z_i^0,F_i\})\\
&=C_i(\mathcal{F})+V_\ell(\conv\{D_i,z_i^\ast\})+V_\ell(\conv\{Z_i^0,F_i\}),
\end{align*} as $F_i$ is deterministic, conditioning on $\mathcal{F}$. Moreover, the polytope  $\conv\{D_i,z_i^\ast\}$ has one fewer random generating point than $K_n^\mu$, with respect to $\mathcal{F}$. Repeating the process, we obtain \[V_\ell(K_n^\mu)=C(\mathcal{F})+\sum_{i \in I} V_\ell(\conv\{Z_i^0,F_i\}). \] Corollary~\ref{cor:independence-under-condition} ensures that under $\mathcal{F}$, the random variables $Z_i^0, \, i \in I$ are independent, because for $i,k \in I, \, i \neq k$, the occurence of $A_k$ does not impose further restriction on the location of $Z_i^0 \in \Delta_i^0$. Hence, \[\Var\bigl(V_\ell(K_n^\mu)\mid \mathcal{F}\bigr)=\sum_{i \in I} \Var\bigl(V_\ell(\conv\{Z_i^0,F_i\}\bigr)\mid \mathcal{F}), \] where \[Z_i^0 \mid \,\mathcal{F} \sim \mu_{\Delta_i^0}.\] Applying Proposition~\ref{prop:variance-lower-bound} yields \[\Var\bigl(V_\ell(K_n^\mu)\mid \mathcal{F}\bigr) \gtrsim a_n^{4\ell-2d-2}h_n^{2d-2\ell+2} \sum_{i=1}^m\mathds{1}_{A_i}.\] Thus, the law of total variance gives
\begin{align*}
    \Var\bigl(V_\ell(K_n^\mu)\bigr)&= \mathbb{E}\bigl[\Var\bigl(V_\ell(K_n^\mu)\mid \mathcal{F}\bigr)\bigr]+ \Var\bigl(\mathbb{E}\bigl[V_\ell(K_n^\mu)\mid\mathcal{F}\bigr]\bigr)\\
    &\geq \mathbb{E}\bigl[\Var\bigl(V_\ell(K_n^\mu)\mid \mathcal{F}\bigl)\bigr]\gtrsim a_n^{4\ell-2d-2}h_n^{2d-2\ell+2} \sum_{i=1}^{m}\mathbb{P}(A_i)\\
    &\gtrsim_\mu a_n^{4\ell-2d-2}h_n^{2d-2\ell+2}\,m \gtrsim  r_n^{2\ell-\frac{d+3}{2}}h_n^{\frac{d+3}{2}},
\end{align*}
    where we used Claim~\ref{claim:A-events-lower-bound} and \eqref{eq:m-estimate}. To conclude the proof, we also need to lower-bound \[\min_{d+1 \leq n \leq\,\widetilde{n}_0} \Var\bigl(V_\ell(K_n^\mu)\bigl).\] The following claim is enough.
    \begin{claim}
        For every $n \geq d+1$, \[\Var\bigl(V_\ell(K_n^\mu)\bigl)>0.\]
    \end{claim}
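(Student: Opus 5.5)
We argue by contradiction. Fix $n\geq d+1$ and suppose $\Var(V_\ell(K_n^\mu))=0$. Then $V_\ell(\conv\{X_1,\ldots,X_n\})=v$ almost surely for some constant $v$. The map
\[
(x_1,\ldots,x_n)\mapsto V_\ell(\conv\{x_1,\ldots,x_n\})
\]
is continuous on $(\mathbb{R}^d)^n$, because the convex hull depends continuously on its generating points in the Hausdorff metric and $V_\ell$ is Hausdorff continuous (this extends to lower-dimensional convex sets). Since $\mu$ has full support, the law $\mu^{\otimes n}$ has full support on $(\mathbb{R}^d)^n$. A continuous function that equals $v$ almost surely with respect to a fully supported measure equals $v$ everywhere: the set where it differs from $v$ is open, and any nonempty open set has positive measure. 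Hence
\[
V_\ell(\conv\{x_1,\ldots,x_n\})=v \quad\text{for every choice of } x_1,\ldots,x_n\in\mathbb{R}^d.
\]

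We now contradict this identity with two explicit configurations.

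\begin{itemize}
\item Taking all $x_k=0$ gives a single point, so $v=V_\ell(\{0\})=0$ for $\ell\geq1$.
\item Taking $x_1,\ldots,x_{d+1}$ to be the vertices of a nondegenerate $d$-simplex $\Delta$, with the remaining points repeated among these vertices, gives $V_\ell(\Delta)>0$.
\end{itemize}

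Positivity of $V_\ell(\Delta)$ follows from Kubota's formula \eqref{eq:intrinsic-kubota-formula}. For every $L\in G_{d,\ell}$, the projection $P_L\Delta$ contains the projection of a small ball inside $\Delta$, so $\vol_\ell(P_L\Delta)>0$ for all $L$. Integrating over $G_{d,\ell}$ therefore gives a strictly positive value. This contradicts $v=0$, so $\Var(V_\ell(K_n^\mu))>0$.

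For the theorem, this claim gives a strictly positive minimum over the finitely many $n$ with $d+1\leq n\leq\widetilde{n}_0$. For those $n$, the ratio of the variance to $r_n^{2\ell-(d+3)/2}h_n^{(d+3)/2}$ is bounded below by a positive constant depending on $\mu$. We can then decrease $C(\mu,d,\ell)$ so that the bound holds for all $n\geq d+1$.

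The only point that needs some care is the continuity of $V_\ell$ at degenerate (lower-dimensional) hulls, since the paper states continuity only for convex bodies. To avoid relying on it, we can use instead that the set of configurations with $V_\ell<\varepsilon$ is open. This holds by upper semicontinuity, which Kubota's formula gives directly: it expresses $V_\ell$ as an average of $\vol_\ell(P_L\cdot)$, and each of these is continuous on compact convex sets. That openness is all the argument needs.
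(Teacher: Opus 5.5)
Your proof is correct and is essentially the paper's argument: both use the full support of $\mu$ together with continuity of $V_\ell$ to find two positive-probability sets of configurations on which $V_\ell(K_n^\mu)$ takes different values. The one difference is your comparison point, the degenerate configuration $x_1=\cdots=x_n=0$. This needs continuity of $V_\ell$ on all compact convex sets, which is standard and which you address. The paper avoids degenerate hulls entirely: it uses the event $\{X_1,\ldots,X_n\in B_2^d\}$ with monotonicity to get $V_\ell(K_n^\mu)\le V_\ell(B_2^d)$, and compares this with a simplex satisfying $V_\ell>4V_\ell(B_2^d)$.
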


    \begin{proof}
        Let $ n \geq d+1$ and set \[E_1=\bigcap_{k=1}^{n}\big\{X_k \in B_2^d\big\}.\] Then $\mathbb{P}(E_1)>0$, as $\mu$ has full support and on $E_1$ we have that $K_n^\mu \subseteq B_2^d$. Thus, \[V_\ell(K_n^\mu) \leq V_\ell(B_2^d):=\alpha.\] Let now $P_d=\conv\{p_0,\ldots,p_d\}$ be a $d$-simplex with $V_\ell(P_d)>4\alpha.$ Then by continuity of the intrinsic volumes with respect to Hausdorff metric, we can find $\delta>0$ such that if \[q_k \in B(p_k,\delta),\quad\quad k=0,\ldots,d ,\] then $V_\ell(\conv\{q_0,\ldots,q_d\})>2\alpha$. Hence, if we set \[E_2=\bigcap_{k=1}^{d+1}\big\{X_k \in B(p_{k-1},\delta)\big\},\] we have that $\mathbb{P}(E_2)>0$, as $\mu$ has full support and on $E_2$ \[V_\ell(K_n^\mu)>2\alpha.\] The proof is complete, because with positive probability, $V_\ell(K_n^\mu)$ is not constant.
    \end{proof}
\end{proof}

\section{Lower bound for the number of faces}

In this section, we prove Theorem~\ref{thm:number-of faces}. The method is quite similar. In fact, we first establish Proposition~\ref{prop:partial-faces-lower-bound} which is the counterpart of Proposition~\ref{prop:variance-lower-bound}. We fix the base simplex $F_i$ and compare two positive-probability configurations of two random points in $\Delta_i^0$. A combinatorial analysis, based on stacking and the face structure of simplices, shows that these configurations produce different numbers of $\ell$-faces, providing the necessary local variance contribution.

Fix $0 \leq \ell \leq d-1$. Let also $z_i^j \in \Delta_i^j$ for every $j=1,\ldots,d$ fixed and set \[F_i=\conv\{z_i^1,\ldots,z_i^d\}.\] 
We remind that $b_2$ is small enough, depending only on $d$ such that all our arguments hold.

\begin{proposition}\label{prop:partial-faces-lower-bound}
    If $Z_1,Z_2$ are independent random vectors, distributed according to $\mu_{\Delta_i^0}$, then \[\Var\bigl(f_\ell(\conv\{Z_1,Z_2,F_i\})\bigr) \gtrsim1.\] 
\end{proposition}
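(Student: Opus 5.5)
The plan is to exhibit two events of probability bounded below by a constant depending only on $d$, on which $f_\ell(\conv\{Z_1,Z_2,F_i\})$ takes two different deterministic values. Then, exactly as in \eqref{eq:conditional-variance}, writing $Y=f_\ell(\conv\{Z_1,Z_2,F_i\})$ and taking an independent copy $Y'$, we get $\Var(Y)=\tfrac12\mathbb{E}(Y-Y')^2\geq \tfrac12\,\mathbb{P}(E_1)\mathbb{P}(E_2)$, which gives the claim.

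\textbf{Event $E_1$ (hidden point).} Reuse the fiber decomposition \eqref{eq:delta0-alternative-writing}. Condition on $Z_1=y_i+p+t_1u_i$ with $p\in\frac{b_2}{2}W$ and $t_1\in I_1(p)$, and on $Z_2=y_i+p'+t_2u_i$ with $p'$ in a small ball around $p$ and $t_2\in I_2(p')$. For $p'=p$, Claim~\ref{claim:convex-hull-inclusion} gives $Z_2\in[Z_1,\Phi(p)]$. To obtain an open (positive-measure) configuration, I would instead require $Z_2\in{\rm int}\,\conv\{Z_1,F_i\}$. This holds if $p'$ is within a small multiple of $b_2a_n$ of $p$, because the cross-section of $\conv\{Z_1,F_i\}$ at height $t_2$ contains a tangential ball of radius $\gtrsim b_2 a_n$ around the point above $p$, by Claim~\ref{claim:ball-in-F} and similar triangles. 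On $E_1$ the polytope is the $d$-simplex $\conv\{Z_1,F_i\}$, so $f_\ell=\binom{d+1}{\ell+1}$ by \eqref{eq:m-simplex-faces}. The probability bound follows as in Claim~\ref{claim:W-positive-probability} and \eqref{eq:probability-of-interval}: all regions involved have volume $\asymp a_n^{d-1}h_n$ up to constants depending on $b_2$, and the density is comparable to $f_\mu(r_n)$ by \eqref{eq:simplices-f-asymptotics}.

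\textbf{Event $E_2$ (single stacking).} Here $Z_2$ must lie beyond exactly one facet of $\conv\{Z_1,F_i\}$ and beneath all the others, which makes $\conv\{Z_1,Z_2,F_i\}$ the stacking of a $d$-simplex over one facet. Its $f$-vector is then $f_\ell=2\binom{d+1}{\ell+1}-\binom{d}{\ell+1}$ for $\ell\le d-2$ (subtract the facet removed) and $f_{d-1}=2(d+1)-2$. In every case it differs from $\binom{d+1}{\ell+1}$ by the positive quantity $\binom{d+1}{\ell+1}-\binom{d}{\ell+1}\mathbf 1_{\ell\le d-2}-\mathbf 1_{\ell=d-1}\ge1$. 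To realize this, pick a facet $G=\conv\{Z_1,z_i^2,\dots,z_i^d\}$, a "side" facet containing $Z_1$. Take $Z_1$ near the top $y_i^0$ (so $t_1$ close to $h_n$ and $p$ close to $0$), and take $Z_2$ near the top as well but tangentially displaced toward the region opposite $z_i^1$, at distance $\asymp b_2 a_n$. Then $Z_2$ is separated from $\conv\{Z_1,F_i\}$ by ${\rm aff}\,G$ only. The cleanest way to check this is with the barycentric coordinates $\beta$ with respect to $(Z_1,z_i^1,\dots,z_i^d)$: one needs $\beta_1(Z_2)<0$ and all other $\beta_k(Z_2)>0$, an open condition. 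I would verify it at one explicit model configuration (with $z_i^j=y_i^j$ approximations and a perturbation argument using the Neumann estimate \eqref{eq:neumann-result}), and then take small neighbourhoods, of relative size depending only on $d$ and $b_2$, of both points inside $\Delta_i^0$.

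The main obstacle is this last step. We must show that the beyond/beneath region for a single facet intersects $\Delta_i^0$ in a set of $\mu_{\Delta_i^0}$-measure $\gtrsim 1$, uniformly in $n$ and in the choice of $z_i^j\in\Delta_i^j$. Since $\Delta_i^0$ is extremely flat ($h_n\ll a_n$), it is safest to rescale by the affine map $u_i\mapsto u_i/h_n$, $u_i^\perp\ni w\mapsto w/a_n$. This sends $\Delta_i$ to a fixed simplex up to $O(b_2)$ perturbations, preserves convex hulls and face lattices, and turns $\mu_{\Delta_i^0}$ into a measure with density comparable to uniform. The combinatorial configuration can then be fixed once in the normalized picture and stays valid by continuity and compactness over the admissible positions of the $z_i^j$.
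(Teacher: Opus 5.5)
Your overall scheme is sound and is in substance the paper's. You use two events of probability $\gtrsim 1$ on which $f_\ell(\conv\{Z_1,Z_2,F_i\})$ takes two different deterministic values, combined via an independent copy; the paper instead conditions on one point and lets the other fall either inside or in a single-stacking region, which is equivalent. Your hidden event $E_1$ works as described, and the displacement direction you propose for $E_2$ (away from $z_i^1$) is the right one. One minor slip: for $\ell=d-1$ the stacking adds $d-1$ facets, as your own count $2(d+1)-2$ shows, not $d$; this is harmless.

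The genuine gap is where you put $Z_1$ in $E_2$. Near the apex $y_i^0$ of $\Delta_i^0$ the stacking region inside $\Delta_i^0$ collapses. In your own model configuration $z_i^j=y_i^j$, $Z_1=y_i^0$, one has $\conv\{Z_1,F_i\}=\Delta_i\supseteq\Delta_i^0$. So no point of $\Delta_i^0$ lies beyond any facet, and the perturbation argument has nothing to start from.

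Quantitatively, in the model case write $\lambda(Z_1)=(1-b_2)e_0+b_2\tau$ and $\lambda(Z_2)=(1-b_2)e_0+b_2\sigma$ with $\tau,\sigma\in\Lambda_d$. Then $\beta_1(Z_2)=b_2(\sigma_1-\beta_0\tau_1)$ with $\beta_0=\lambda_0(Z_2)/\lambda_0(Z_1)\le(1-b_2)^{-1}$. Hence $\beta_1(Z_2)<0$ forces $\sigma_1<\tau_1/(1-b_2)$, a set of $\mu_{\Delta_i^0}$-measure $O_d(\tau_1)$. Taking $t_1$ close to $h_n$ and $p$ close to $0$ on the scale of $\Delta_i^0$ means $\tau\approx e_0$, hence $\tau_1\approx 0$, and then $\mathbb{P}(E_2)$ is not bounded below. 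On the coarser scale of $\Delta_i$, the recipe does not control $\tau_1$ at all.

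What is missing is the requirement that $Z_1$ lie in a central part of $\Delta_i^0$. You need $\lambda_j(Z_1)\gtrsim b_2$ for $j\ge1$ and $\lambda_0(Z_1)-(1-b_2)\gtrsim b_2$, so that there is room to decrease $\lambda_1$ without leaving $\Delta_i^0$. The paper takes $Z_1\in\mathcal{L}_i$, where $\lambda_0\ge1-b_2+b_2\eta$ and $\lambda_j\ge b_2\eta$ with $\eta=1/(2(d+1))$. It then defines the $Z_2$-region directly in barycentric coordinates $\beta$ with respect to the actual points $Z_1,z_i^1,\dots,z_i^d$: $\beta_1\in(-2\delta,-\delta)$ and $\beta_j\in(\delta,2\delta)$ for $j\ge2$, where $\delta=b_2\eta/(8d)$.

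Since $|\lambda_j(x)-\lambda_j(Z_1)|\le\sum_{k\ge1}|\beta_k(x)|\le b_2\eta/4$, this box lies in $\Delta_i^0$ for every admissible choice of $z_i^j\in\Delta_i^j$ simultaneously. Its volume $d!\,\delta^d\,\vol_d(\conv\{Z_1,F_i\})$ is a fixed $d$-dependent fraction of $\vol_d(\Delta_i^0)$, using $|\det A|\ge(1-2b_2)^{d+1}$. This dispenses with the rescaling, the model configuration and the compactness argument. The same box with $\beta_1\in(\delta,2\delta)$ gives the hidden event in one line.
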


\begin{proof}
The proof will be based on some claims. We first set  \[\eta=\frac{1}{2(d+1)}, \quad \delta=\frac{b_2\eta}{8d}.\]If $\lambda(x)$ is the barycentric vector of some $x \in \mathbb{R}^d$ with respect to $\Delta_i$, we also set  \[K_i=\{x \in \Delta_i:\lambda_j(x) \geq \eta, \ j=0,\ldots,d\}, \quad \mathcal{L}_i=y_i^0+b_2(K_i-y_i^0).\]
As $K_i \subseteq \Delta_i$, we get that $\mathcal{L}_i\subseteq \Delta_i^0$. If $\tilde{y}_i=\frac{1}{d+1}\sum_{j=0}^d y_i^j$ is the centroid of $\Delta_i$ then it is easy to see that $K_i=\tilde{y}_i+\frac{1}{2}(\Delta_i-\tilde{y}_i)$. Therefore, \begin{equation}\label{eq:L-volume-comparable}\vol_d(\mathcal{L}_i)=2^{-d}\vol_d(\Delta_i^0).\end{equation} Fix some $z_i^0 \in \mathcal{L}_i$ and set $B_i=\conv\{z_i^0,\ldots,z_i^d\}$. Observe that \[\lambda_0(z_i^0) \geq1-b_2+b_2\eta, \quad \lambda_j(z_i^0) \geq b_2\eta, \quad j=1,\ldots,d.\] Define now \[U_{\rm in}(z_i^0,F_i)=\{x:\delta<\beta_j(x)<2\delta, \ j=1,\ldots, d\},\] where $\beta(x)$ is the barycentric vector of $x$ with respect to $Z_i$. 

\begin{claim}\label{claim:Uin-subset}
    We have that \[ U_{\rm in}(z_i^0,F_i) \subseteq \Delta_i^0.\]
\end{claim}

\begin{proof}
Let $x \in U_{\rm in}(z_i^0,F_i)$. Then, we can write \[x=\sum_{k=0}^{d}\beta_k(x)z_i^k=z_i^0+\sum_{k=1}^d\beta_k(x)(z_i^k-z_i^0).\] Therefore, since barycentric coordinates are affine functions, for every $j=0,\ldots,d$
\[
    |\lambda_j(x)-\lambda_j(z_i^0)|=\biggl|\sum_{k=1}^{d}\beta_k(x)(\lambda_j(z_i^k)-\lambda_j(z_i^0))\biggr| \leq \sum_{k=1}^d|\beta_k(x)| \leq \frac{b_2\eta}{4}.
\] Hence,
\[\lambda_0(x) \geq 1-b_2+\frac{3b_2\eta}{4} \geq 1-b_2, \quad \lambda_j(x) \geq \frac{3b_2}{4}\geq 0, \quad j=1,\ldots,d,\] which implies that $x \in \Delta_i^0$.   
\end{proof}

Set also \[U_{\rm st}(z_i^0,F_i)=\{x:\beta_1(x) \in (-2\delta,-\delta), \, \beta_j(x) \in (\delta,2\delta), \ j=2,\ldots,d\}.\] The exact argument as in Claim~\ref{claim:Uin-subset} yields: 
    \begin{equation}\label{eq:Ust-subset}
        U_{\rm st}(z_i^0,F_i) \subseteq \Delta_i^0.
    \end{equation}

We will now lower-bound the measures of these sets.

\begin{claim}\label{claim:lower-bound-in-stacked}
        There exists a constant $c(d)>0$ such that \[\mu_{\Delta_i^0}\bigl(U_{\rm in}(z_i^0,F_i)\bigr),\,\mu_{\Delta_i^0}\bigl(U_{\rm st}(z_i^0,F_i)\bigl) \geq c(d).\]
\end{claim}

\begin{proof}
    We will prove it only for $U_{\rm in}(z_i^0,F_i)$, as the other part is quite similar. Using \eqref{eq:simplices-f-asymptotics} we get \[\mu_{\Delta_i^0}\bigl(U_{\rm in}(z_i^0,F_i)\bigr) \asymp \frac{\vol_d\bigl(U_{\rm in}(z_i^0,F_i)\bigr)}{\vol_d(\Delta_i^0)}.\] Since every $x \in \mathbb{R}^d$ can be written uniquely as \[x=z_i^0+\sum_{j=1}^{d}\beta_j(z_i^j-z_i^0),\] where $\beta_j=\beta_j(x)$ for every $j=1,\ldots,d$, if we set \[U_{\rm in}=\{\beta \in \mathbb{R}^d: \beta_j \in (\delta,2\delta), \ j=1,\ldots,d\},\] then \begin{align*}
    \vol_d\bigl(U_{\rm in}(z_i^0,F_i)\bigr)&=|{\rm det}(z_i^1-z_i^0,\ldots,z_i^d-z_i^0)\,|\, \vol_d(U_{\rm in})\\
    &=d!\,\vol_d(B_i)\,\delta^d\\
    &=d! \,|{\rm det(A)|}\,\vol_d(\Delta_i)\,\delta^d,
    \end{align*} where $A$ is the matrix in the proof of Lemma~\ref{lem:geometric-small-c}. As $A=I_{d+1}+E$, where \[\|E:\ell_1^{d+1} \to \ell_1^{d+1}\| \leq 2b_2,\] we can show that every eigenvalue $\kappa$ of $A$, satisfies $|\kappa| \geq 1-2b_2$. Thus, \[|{\rm det}(A)| \geq (1-2b_2)^{d+1},\] which completes the proof, as $b_2,\delta$ are positive functions of $d$.
\end{proof}

\begin{claim}\label{claim:intermediate-variance}
    If we set $Y_i=f_\ell(\conv\{Z_1,z_i^0,F_i\})$, then \[\Var(Y_i) \gtrsim 1.\]
\end{claim}

\begin{proof}
     To begin with, notice that \[Y_i=f_\ell(\conv\{Z_1,B_i\})\]By construction, if $x \in U_{\rm in}(z_i^0,F_i)$, then $x \in {\rm int}(B_i)$. Consequently, $\conv\{x,B_i\}=B_i$ and \begin{equation}\label{eq:faces-in}       
    f_\ell(\conv\{x,B_i\})=f_\ell(B_i), \quad x \in U_{\rm in}(z_i^0,F_i).
    \end{equation}
    Now suppose $x \in U_{\rm st}(z_i^0,F_i)$. By constuction, \[\beta_1(x)<0, \quad \beta_j(x)>0, \quad j=0,2,\ldots,d.\] Consider the facet $G_1=\conv\{z_i^0,z_i^2,\ldots,z_i^d\}$ of $B_i$. Then $x$ lies beyond $G_1$ and beneath every other facet of $B_i$. Hence, $\conv\{x,B_i\}$ is obtained from $B_i$ by stacking a pyramid onto the facet $G_1$; see for example \cite[\S 2.4]{Ziegler}. Therefore, using also \eqref{eq:m-simplex-faces},
    \[f_{d-1}(\conv\{x,B_i\})=f_{d-1}(B_i)+d-1, \quad f_{\ell}(\conv\{x,B_i\})=f_{\ell}(B_i)+\binom{d}{\ell}, \quad \ell=0,\ldots,d-2,\] which can also be written as \begin{equation}\label{eq:faces-stacked}f_{\ell}(\conv\{x,B_i\})=f_\ell(B_i)+\tau_{d,\ell}, 
    \end{equation}
    where $\tau_{d,\ell} \geq 1$ for every $\ell=0,\ldots,d-1$. Let now $\widetilde{Z}_1$ be an independent copy of $Z_1$ and set \[ \widetilde{Y}_i=f_\ell(\conv\{\widetilde{Z}_1,z_i^0,F_i\})=f_\ell(\conv\{\widetilde{Z}_1B_i\}),\] which is an independent copy of $Y_i$. Then we have 
        \[
            \Var(Y_i)=\frac{1}{2}\mathbb{E}\bigl[(Y_i-\widetilde{Y}_i)^2\bigr].
        \] If we set \[\Pi=\bigl\{Z_1 \in U_{\rm in}(z_i^0,F_i), \, \widetilde{Z}_1 \in  U_{\rm st}(z_i^0,F_i)\bigr\} ,\] then on $\Pi$ , by \eqref{eq:faces-in} and \eqref{eq:faces-stacked}, we have \[|Y_i -\widetilde{Y}_i|=\tau_{d,\ell} \geq 1\] Therefore, as $Z_1,\widetilde{Z}_1$ are i.i.d random vectors, distributed according to $\mu_{\Delta_i^0}$ \[
            \Var(Y_i) \geq \frac{1}{2}\,\mu_{\Delta_i^0}\bigl(U_{\rm in}(z_i^0,F_i^0)\bigr)\,\mu_{\Delta_i^0}\bigl(U_{\rm st}(z_i^0,F_i^0)\bigr)\,\tau_{d,\ell}^2 \gtrsim 1,
        \] by Claim~\ref{claim:lower-bound-in-stacked}.  
\end{proof}

Now, we are able to finish the proof of Proposition~\ref{prop:partial-faces-lower-bound}. Indeed,  by the law of total variance, and the independence of $Z_1$ and $Z_2$, \begin{align*}
        \Var \bigl(f_\ell(\conv\{Z_1,Z_2,F_i\})\bigr)&\geq \mathbb{E}\bigl[\Var\bigl(f_\ell(\conv\{Z_1,Z_2,F_i\})\mid Z_2\bigr)\,\bigr]\\
        &\geq\int_{\mathcal{L}_i} \Var\bigl(f_\ell(\conv\{Z_1,z_i^0,F_i\})\bigr)\, d\mu_{\Delta_i^0}(z_i^0)\\
        &\gtrsim\mu_{\Delta_i^0}(\mathcal{L}_i)  \asymp \frac{\vol_d(\mathcal{L}_i)}{\vol_d(\Delta_i^0)}\asymp 1,
        \end{align*} where we used Claim~\ref{claim:intermediate-variance}, \eqref{eq:simplices-f-asymptotics} and \eqref{eq:L-volume-comparable}
\end{proof}

Before we proceed to the proof of the theorem, we will need some geometric lemmas, corresponding to the ones for the intrinsic volumes.

\begin{lemma}\label{lem:faces-construction}
Let $R_N=\{x_1,\ldots,x_N\} \subseteq \mathbb{R}^d$ and denote by $K_N$ the convex hull of $R_n$. Let also $i \in \{1,\ldots,m\}$. Assume that exactly one point of $R_N$, say $z_i^j$ lies in $\Delta_i^j, \, j=1,\ldots,d$, exactly two points of $R_N$, say $z_1,z_2$ lie on $\Delta_i^0$ and no further point lies in $\widetilde{H}_i$.  Set \[F_i=\conv\{z_i^1,\ldots,z_i^d\}, \quad Q_i=\conv(R_N\setminus\{z_1,z_2\}), \quad P_i(z_1,z_2)=\conv\{z_1,z_2,F_i\}. \] Then $Q_i$ and $P_i(z_1,z_2)$ lie on opposite sides of $H_{F_i}={\rm aff}(F_i)$ and\[K_N=Q_i \cup P_i(z_1,z_2), \quad Q_i \cap P_i(z_1,z_2)=F_i.\]   
\end{lemma}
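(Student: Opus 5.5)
Mirroring Lemma~\ref{lem:summability-possible}, the plan is to separate everything by the hyperplane $H_{F_i}$. Let $N_{F_i}=u_i-\nabla g$ be the normal to $H_{F_i}$ built in the proof of Proposition~\ref{prop:variance-lower-bound}. That construction used only $z_i^j\in\Delta_i^j$ for $j=1,\ldots,d$, so it applies here. Write $H_{F_i}=\{y:\langle y,N_{F_i}\rangle=s\}$.

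\textbf{Step 1: $z_1,z_2$ lie on the positive side.} By \eqref{eq:inner-product-with-normal} and Claim~\ref{claim:sign-of-difference}, every $z\in\Delta_i^0$ satisfies $\langle z,N_{F_i}\rangle>s$. Hence $P_i(z_1,z_2)\subseteq\{\langle\cdot,N_{F_i}\rangle\ge s\}$. Moreover, $P_i(z_1,z_2)$ meets $H_{F_i}$ exactly in $F_i$: a convex combination with a positive weight on $z_1$ or $z_2$ is strictly positive.

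\textbf{Step 2: $Q_i$ lies on the other side.} Apply Lemma~\ref{lem:shadow-lemma} with $z_i^0:=z_1$; this is legitimate since $z_1\in\Delta_i^0$. Every generating point $x$ of $Q_i$ other than $z_i^1,\ldots,z_i^d$ lies outside $\widetilde H_i$, so $x\in S(z_1,F_i)$. In barycentric coordinates with respect to $z_1,z_i^1,\ldots,z_i^d$ this means $\beta_0(x)\le 0$. Since $\beta_0$ is the normalized signed distance to $H_{F_i}$, positive on the side of $z_1$, we get $\langle x,N_{F_i}\rangle\le s$. The same holds trivially for the $z_i^j$. By convexity, $Q_i\subseteq S(z_1,F_i)\subseteq\{\langle\cdot,N_{F_i}\rangle\le s\}$, which proves the opposite-sides statement.

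\textbf{Step 3: the union.} The inclusions $F_i\subseteq Q_i\cap P_i$ and $Q_i\cup P_i\subseteq K_N$ are clear. For the intersection, Steps 1 and 2 give $Q_i\cap P_i\subseteq H_{F_i}\cap P_i=F_i$. For the union, I would first show that $Q_i\cup P_i$ is convex. Take $a\in Q_i$ and $b\in P_i$. By Steps 1 and 2 the segment $[a,b]$ crosses $H_{F_i}$ at some point $v$, and I need $v\in F_i$. Write $b$ as a convex combination of $z_1$, $z_2$ and a point of $F_i$, and use $Q_i\subseteq S(z_1,F_i)\cap S(z_2,F_i)$, where Lemma~\ref{lem:shadow-lemma} is applied once with $z_1$ and once with $z_2$. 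In barycentric coordinates of the simplex $\conv\{z_1,F_i\}$, the condition is that the coordinates $\beta_1,\ldots,\beta_d$ of $v$ are nonnegative. These coordinates vary affinely along $[a,b]$, and by the shadow property they are nonnegative at both endpoints, so they are nonnegative at $v$.

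To make this work for $b$ near $z_2$, I would instead express $b=\theta z_2+(1-\theta)b'$ with $b'\in\conv\{z_1,F_i\}$. I would then reduce twice, arguing as in Lemma~\ref{lem:summability-possible}: first that $\conv(Q_i\cup\{z_1\})=Q_i\cup\conv\{z_1,F_i\}$, and then that adding $z_2$ only adds $\conv\{z_2,\cdot\}$ over the visible part. This reduction is where I expect the main obstacle. The issue is that $z_2$ may see faces of $\conv\{z_1,F_i\}$ that are not contained in $F_i$. The resolution should be that every such extra face lies in $P_i$. Concretely, for $x\in K_N$ write $x\in[q,p]$ with $q\in Q_i$ and $p\in P_i$. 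The segment crosses $H_{F_i}$ at a point $v$, and the shadow inclusions for both apexes force $v\in F_i$. Then $[q,p]=[q,v]\cup[v,p]\subseteq Q_i\cup P_i$, since $F_i\subseteq Q_i$ and $F_i\subseteq P_i$. This yields $K_N=\conv(Q_i\cup P_i)=Q_i\cup P_i$.
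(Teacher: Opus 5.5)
Your Steps 1 and 2, and your proof of $Q_i\cap P_i(z_1,z_2)=F_i$, are correct. Step 1 differs harmlessly from the paper. You place $z_1,z_2$ strictly on the positive side using $\langle z-\Phi(p),N_{F_i}\rangle=t-g(p)>0$ (Claim~\ref{claim:sign-of-difference}), whereas the paper proves $\beta_0(z_2)>0$ with the Neumann-series bound from Lemma~\ref{lem:geometric-small-c}; both routes work.

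The gap is in the union $K_N=Q_i\cup P_i(z_1,z_2)$, which is the only nontrivial part of the lemma. You rightly note that the affine argument in the barycentric coordinates of $\conv\{z_1,F_i\}$ breaks down: $z_2$ need not lie in that simplex, so $\beta_j(p)$ can be negative for $j\ge 1$. After that, the ``reduce twice'' route is left unfinished. Your final paragraph then only asserts that the two shadow inclusions force the crossing point $v=[q,p]\cap H_{F_i}$ into $F_i$. That assertion is exactly what has to be proved, and the proposal gives no argument for it.

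The missing idea is to use both shadows at once, through the points $f_1:=[q,z_1]\cap H_{F_i}$ and $f_2:=[q,z_2]\cap H_{F_i}$. Both lie in $F_i$ because $q\in S(z_1,F_i)\cap S(z_2,F_i)$.
\begin{itemize}
\item Take $\beta$ to be the barycentric coordinates with respect to $z_1,z_i^1,\ldots,z_i^d$, so that $\beta_0(q)\le 0<\beta_0(z_2)$.
\item Write $p=\alpha_1z_1+\alpha_2z_2+\alpha_3f$ as a convex combination with $f\in F_i$, and assume $p\notin F_i$ (the case $p\in F_i$ is trivial).
\item Then $D:=\alpha_1+\alpha_2\beta_0(z_2)-\beta_0(q)>0$, and a direct computation gives $v=D^{-1}\bigl[\alpha_1(1-\beta_0(q))f_1+\alpha_2(\beta_0(z_2)-\beta_0(q))f_2-\alpha_3\beta_0(q)f\bigr]$.
\item All weights are nonnegative and they sum to $D$, so $v$ is a convex combination of points of $F_i$.
\end{itemize}
Equivalently, $v\in\conv\{q,z_1,z_2,f\}\cap H_{F_i}$. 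A hyperplane section of a polytope is the convex hull of its vertices lying on the hyperplane together with the crossing points of segments joining vertices on opposite sides. Here that section is $\conv\{f_1,f_2,f\}\subseteq F_i$, with $f_1=f_2=q\in F_i$ when $\beta_0(q)=0$. This is the computation the paper carries out; once you add it, your argument goes through.
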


\begin{proof}
For $x \in \mathbb{R}^d$ let $\lambda(x)$ and $\beta(x)$  be the barycentric vectors with respect to $y_i^0,\ldots,y_i^d$ and $z_1,z_i^1,\ldots,z_i^d$ respectively. Then we have that \[H_{F_i}=\{x:\beta_0(x)=0\}.\] If $x \in Q_i$, then by \eqref{eq:shadow-inclusion} for $z_1$, there exists $f \in F_i$ such that $x=(1-t)z_1+tf$ for some $t \geq 1.$ Therefore, \[\beta_0(x)=(1-t)\beta(z)+t\beta(f)=1-t \leq 0.\] We will now show that $\beta_0(x) \geq 0$ for every $x \in P_i(z_1,z_2)$. It suffices to show that $\beta_0(z_2)>0$. Indeed, with the notation of the proof of Lemma~\ref{lem:geometric-small-c} \begin{align*}
    \beta_0(z_2) &\geq \lambda_0(z_2)-\|\beta(z_2)-\lambda(z_2)\|_1 = \lambda_0(z_2)- \|(A^{-1}-I_{d+1})\lambda (z_2)\|_1\\
    &\geq \lambda_0(z_2)-\frac{2b_2}{1-b_2}\|\lambda(z_2)\|_1 \geq 1-b_2-\frac{2b_2}{1-2b_2}>0,
\end{align*} if $b_2$ is small enough, where we also used the fact that $z_2 \in \Delta_i^0$.

For the set equalities, by our previous work it is clear that $Q_i \cap P_i(z_1,z_2)=F_i$. So it remains to show that \[K_N = Q_i \cup P_i(z_1,z_2).\] It is enough to prove that $Q_i \cup P_i(z_1,z_2)$ is convex. Indeed, let $q \in Q_i$ and $p \in P_i(z_1,z_2)$. If $p \in F_i$, then $p \in Q_i$ and therefore $[q,p]\subseteq Q_i $. So suppose $p \notin F_i$ and write \begin{equation}\label{eq:easy-equation}
p=az_1+bz_2+\gamma f,
\end{equation}
as a convex combination, where $f \in F_i$. We shall prove that there exists $r \in [q,p] \cap F_i$. Then \[[q,p]=[q,r] \cup [r,p] \subseteq Q_i \cup P_i(z_1,z_2). \]

We have that \[\beta_0(q) \leq 0, \quad \beta_0(p)=a+b\beta_0(z_2).\] Therefore, $[q,p]$ intersects $H_{F_i}$ at \begin{equation}\label{eq:complicated-equation-1}r=\frac{a+b\beta_0(z_2)}{a+b\beta_0(z_2)-\beta_0(q)}\,q-\frac{\beta_0(q)}{a+b\beta_0(z_2)-\beta_0(q)}\,p.\end{equation}
    We need to show that $r \in F_i$. Applying \eqref{eq:shadow-inclusion} twice for $z_1$ and $z_2$, we deduce that \[q \in S(z_1,F_i) \cap S(z_2,F_i).\] Hence, we can find $f_1,f_2 \in F_i$ such that \[f_1 \in [q,z_1], \quad f_2 \in [q,z_2].\] A direct calculation shows that \begin{equation}\label{eq:complicated-equation-2}
        f_1=\frac{1}{1-\beta_0(q)}\,q-\frac{\beta_0(q)}{1-\beta_0(q)}\,z_1, \quad f_2=\frac{\beta_0(z_2)}{\beta_0(z_2)-\beta_0(q)}\,q-\frac{\beta_0(q)}{\beta_0(z_2)-\beta_0(q)}\,z_2,\end{equation} as convex combinations. Combinining \eqref{eq:easy-equation}, \eqref{eq:complicated-equation-1} and \eqref{eq:complicated-equation-2} we obtain, \[r=\frac{a\bigl(1-\beta_0(q)\bigr)}{a+b\beta_0(z_2)-\beta_0(q)}\,f_1+\frac{b\bigl(\beta_0(z_2)-\beta_0(q)\bigr)}{a+b\beta_0(z_2)-\beta_0(q)}\,f_2-\frac{\gamma\beta_0(q)}{a+b\beta_0(z_2)-\beta_0(q)}\,f \in F_i,\] as a convex combination.
\end{proof}

With the notation of Lemma~\ref{lem:faces-construction}, $K_N$ is the connected sum of $Q_i$ and $P_i(z_1,z_2)$ along the common facet $F_i$. The number of faces of $K_N$ is given by standard arguments from convex geometry (see also \cite[Remark~3.2.2]{Rich-Geb}):

\begin{corollary}\label{cor:relations-of-faces}
\leavevmode
    \begin{enumerate}
    \item[{\rm (i)}] If ${\rm dim}(Q_i)=d-1$, then \[f_{d-1}(K_N)=f_{d-1}(P_i(z_1,z_2)).\]
    \item[{\rm (ii)}]If ${\rm dim}(Q_i)=d$, then \[f_{d-1}(K_N)=f_{d-1}(Q_i)+f_{d-1}(P_i(z_1,z_2))-2.\] 
    \item[{\rm (iii)}] For every $0 \leq \ell \leq d-2$, \[f_\ell(K_N)=f_{\ell}(Q_i)+f_{\ell}(P_i(z_1,z_2))-f_\ell(F_i).\]
    \end{enumerate}
\end{corollary}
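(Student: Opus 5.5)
Everything rests on Lemma~\ref{lem:faces-construction}: $K_N=Q_i\cup P$ with $P:=P_i(z_1,z_2)$, $Q_i\cap P=F_i$, and $Q_i$, $P$ lying in opposite closed half-spaces $H^-$, $H^+$ of $H_{F_i}$. Here $F_i$ is a $(d-1)$-simplex and a facet of $P$, since $\beta_0\ge 0$ on $P$ and $\beta_0(z_2)>0$. The plan is to classify faces of $K_N$ by comparing them with the faces of $Q_i$ and of $P$.

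\textbf{Step 1 (faces away from $H_{F_i}$).} Let $G$ be a face of $K_N$ with supporting hyperplane $H$. If $G$ has a point strictly on the $P$-side, then $G\cap P=H\cap P$ is a nonempty face of $P$. I will show that every vertex of $G$ lies in $P$, so $G$ is a face of $P$ not contained in $F_i$. Conversely, let $G'$ be a face of $P$ with $G'\not\subseteq F_i$. I expect $G'$ to be a face of $K_N$, and I will prove this with the shadow inclusion \eqref{eq:shadow-inclusion}, applied to both $z_1$ and $z_2$. Indeed, $Q_i\subseteq S(z_1,F_i)\cap S(z_2,F_i)$, and a supporting hyperplane of $P$ at $G'$ can be tilted about $G'$ so that it also supports these shadow cones. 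Symmetrically, faces of $K_N$ with a point strictly in $H^-$ correspond to faces of $Q_i$ not contained in $F_i$, provided $\dim Q_i=d$. If $\dim Q_i=d-1$, then $Q_i=F_i$ and $K_N=P$, which gives (i) at once.

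\textbf{Step 2 (faces inside $H_{F_i}$).} The remaining faces of $K_N$ lie in $F_i$. The key point is that every proper face of $F_i$ is a face of $K_N$. For this I would take a facet normal of $P$ containing that face and one of $Q_i$, and add them: their sum supports $K_N$ exactly on the common face, because both polytopes meet $H_{F_i}$ only in $F_i$. On the other hand, $F_i$ itself is not a face of $K_N$ when $\dim Q_i=d$, since it contains interior points of $K_N$ (points from both sides converge to its relative interior). This is the standard connected-sum statement, so I would cite \cite[Remark~3.2.2]{Rich-Geb} and verify it only at this level of detail.

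\textbf{Step 3 (counting).} For $\ell\le d-2$, the $\ell$-faces of $K_N$ are the $\ell$-faces of $P$ not in $F_i$, plus those of $Q_i$ not in $F_i$, plus all $\ell$-faces of $F_i$. This gives $f_\ell(Q_i)+f_\ell(P)-f_\ell(F_i)$, which is (iii); when $Q_i=F_i$ the same formula still holds trivially. For facets, $F_i$ is a facet of both $Q_i$ and $P$ but not of $K_N$, which gives (ii) with the $-2$. The main obstacle is Step 1's claim that faces of $P$ not contained in $F_i$ survive in $K_N$, i.e.\ that no point of $Q_i$ sees them. I would reduce this to the fact that $Q_i$ lies in the intersection of the two shadows, which uses the full strength of Lemma~\ref{lem:shadow-lemma}.
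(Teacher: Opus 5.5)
Your overall strategy (classify the faces of $K_N$ as the faces of $P$ and of $Q_i$ not contained in $F_i$, plus the proper faces of $F_i$) is the standard connected-sum count. That is all the paper does: it identifies $K_N$ as the connected sum along $F_i$ and refers to Richter-Gebert's Remark~3.2.2. However, the step you single out as the main obstacle does not go through as you propose it. The closed inclusion $Q_i\subseteq S(z_1,F_i)\cap S(z_2,F_i)$ of \eqref{eq:shadow-inclusion} does not guarantee that faces of $P$ not in $F_i$ survive, and tilting cannot help. Suppose a vertex $q$ of $Q_i$ lies on $\partial S(z_1,F_i)$, so that $[z_1,q]$ meets $F_i$ in a ridge $E$. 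Then the facet $\conv(E\cup\{z_1\})$ of $P$ and the facet of $Q_i$ through $E$ and $q$ lie in one hyperplane, and a facet has only one supporting hyperplane. The two facets merge into a single facet of $K_N$, and $E$ is no longer a face. Concretely, take $d=2$, $F_i=[(-1,0),(1,0)]$, $z_1=(0,2)$, $z_2=(0,1)$, $q=(-2,-2)$. Then $q$ lies in both shadows and all conclusions of Lemma~\ref{lem:faces-construction} hold, yet $K_N$ is a triangle, whereas (ii) and (iii) predict $4$ edges and $4$ vertices. Your unproved claim in Step~1, that every vertex of a face meeting the open $P$-side lies in $P$, fails in the same example (consider the edge $[z_1,q]$).

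The missing ingredient is strictness. The vertices of $Q_i$ other than $z_i^1,\dots,z_i^d$ lie in $\mathbb{R}^d\setminus\widetilde H_i$, which is \emph{open}. So Lemma~\ref{lem:shadow-lemma} actually places them in $\mathrm{int}\,S(z_k,F_i)=\{\beta_0<0,\ \beta_1,\dots,\beta_d>0\}$ for $k=1,2$, meaning each segment $[z_k,q]$ crosses $F_i$ at a relative interior point. Hence no hyperplane supporting $K_N$ can contain both $z_k$ and such a $q$, since it would then contain $F_i\cup\{z_k\}$.

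This settles both directions of Step~1 without any tilting. Let $G'\ni z_k$ be a face of $P$ with outer normal $v$. For $q=z_k+t(x-z_k)$ with $t\ge 1$ and $x\in F_i$, one has $\langle v,q\rangle\le\langle v,x\rangle\le h_P(v)$, with equality only if $x\in G'$. For a vertex $q\notin F_i$ we have $x\in\mathrm{relint}\,F_i$, so equality would force $F_i\cup\{z_k\}\subseteq G'$, which is impossible. Therefore the original supporting hyperplane already cuts $K_N$ exactly in $G'$. A symmetric computation handles faces of $Q_i$, and the same observation shows that no face of $K_N$ meets both open sides of $H_{F_i}$.

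In Step~2, the sum of two facet normals works because those facets are already facets of $K_N$ (by the repaired Step~1), not because $P$ and $Q_i$ meet $H_{F_i}$ only in $F_i$. It also only cuts out ridges of $F_i$, so lower-dimensional faces of $F_i$ should be obtained as intersections of ridges.
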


We are now in position to prove Theorem~\ref{thm:number-of faces}.

\begin{proof}[Proof of Theorem~\ref{thm:number-of faces}]
Let $R_n=\{X_1,\ldots,X_n\}$ denote our random sample of $n$ points. Set \[\widetilde{n}_0= \max\{d+1,n_0(\mu,d)\},\] where $n_0(\mu,d)$ is the one from Lemma~\ref{lem:Hij-measure-upper-bound}. Suppose that $n \geq \widetilde{n}_0$. For $1 \leq i \leq m$ denote by $A_i$ the event that exactly one random point of $R_n$ is contained in each simplex $\Delta_i^j, \, j=1,\ldots,d$, exactly two points are contained in $\Delta_i^0$ and no further point of $R_n$ is contained in $\widetilde{H}_i$.

\begin{claim}\label{claim:A-face-events-lower-bound}
    There exists $c_0(\mu,d)>0$ such that for every $i=1,\ldots,m$ \[\mathbb{P}(A_i) \geq c_0(\mu,d).\]
\end{claim}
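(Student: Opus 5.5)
This is the analogue of Claim~\ref{claim:A-events-lower-bound}, with one extra point required in $\Delta_i^0$. I would first write $\mathbb{P}(A_i)$ as an exact multinomial expression. The simplices $\Delta_i^0,\ldots,\Delta_i^d$ are pairwise disjoint for $b_2<1/2$: in barycentric coordinates, points of $\Delta_i^j$ have $\lambda_j\geq 1-b_2>b_2$, while points of the other $\Delta_i^k$ have $\lambda_j\leq b_2$. Each of them is contained in $\Delta_i\subseteq H_i^+\subseteq\widetilde H_i$. Choosing which indices land in which simplex then gives
\[
\mathbb{P}(A_i)=\frac{n!}{2!\,(n-d-2)!}\,\mu(\Delta_i^0)^2\prod_{j=1}^d\mu(\Delta_i^j)\,\bigl(1-\mu(\widetilde H_i)\bigr)^{n-d-2},
\]
valid since $n\geq d+2$. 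The last factor accounts for the remaining $n-d-2$ points avoiding $\widetilde H_i$. This set contains all the $\Delta_i^j$, so ``no further point'' is exactly this requirement.

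Next I would bound each factor. By Lemma~\ref{lem:simplex-mass} and Lemma~\ref{lem:choice-tn}, each $\mu(\Delta_i^j)\gtrsim f_\mu(r_n)a_n^{d-1}h_n=c_{\mu,d}/n$, so the product of the $d+2$ masses is $\gtrsim c_{\mu,d}^{d+2}n^{-(d+2)}$. For $n$ large compared to $d$, the combinatorial prefactor is $\geq n^{d+2}/C(d)$. By Lemma~\ref{lem:H-plus-upper-bound} and Lemma~\ref{lem:Hij-measure-upper-bound}, a union bound gives $\mu(\widetilde H_i)\leq (d+1)C(d)/n$ for $n\geq n_0(\mu,d)$. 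Hence $(1-\mu(\widetilde H_i))^{n-d-2}\geq (1-C'(d)/n)^{n}$, which is bounded below by a positive constant depending only on $d$ once $n\geq 2C'(d)$ (for instance via $\log(1-x)\geq -2x$ for $x\leq 1/2$). Multiplying the three bounds yields $\mathbb{P}(A_i)\geq c_0(\mu,d)>0$ uniformly in $i$, for all $n\geq\widetilde n_0$, after enlarging $\widetilde n_0$ by a constant depending on $d$ if necessary.

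The only delicate point is bookkeeping rather than analysis: checking the disjointness of the $\Delta_i^j$, and that the thresholds on $n$ (Lemma~\ref{lem:Hij-measure-upper-bound}, the prefactor, and the exponential factor) are absorbed into $\widetilde n_0$. None of these bounds depends on $i$, because the estimates depend only on $d$, $\mu$, and rotation invariance.
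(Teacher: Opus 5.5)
Your proof is correct and follows the paper's argument. You use the same exact multinomial expression for $\mathbb{P}(A_i)$ and bound it factor by factor via Lemmas~\ref{lem:simplex-mass}, \ref{lem:choice-tn}, \ref{lem:H-plus-upper-bound} and \ref{lem:Hij-measure-upper-bound}; your bookkeeping (the $1/2!$ for the unordered pair in $\Delta_i^0$, the power $c_{\mu,d}^{d+2}$, and the explicit threshold on $n$ keeping $(1-C'(d)/n)^{n}$ bounded below) is in fact slightly more careful than the paper's display, and none of it affects the conclusion.
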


\begin{proof}
Notice that by its definition, we have that
\[
\mathbb P(A_i)
=
\frac{n!}{(n-d-2)!}
\left(1-\mu(\widetilde{H}_i)\right)^{n-d-2}\mu(\Delta_i^0)\prod_{j=0}^d \mu(\Delta_i^j)
 \] Hence,  by Lemma~\ref{lem:simplex-mass}, Lemma~\ref{lem:choice-tn}, Lemma~\ref{lem:H-plus-upper-bound} and Lemma~\ref{lem:Hij-measure-upper-bound}
\[
\mathbb{P}(A_i) \geq C_1(d)\,c_{\mu,d}  \,n^{d+2}\frac1{n^{d+2}}
\left(1-\frac {C_2(d)}{n}\right)^{n-d-2}
\ge c_0(\mu,d),
\]
where $c_0(\mu,d)>0$.
\end{proof}

For every $i=1,\ldots,m$, define \[J_i:=\begin{cases}
    \{k_1,k_2\} &, \text{ if } A_i \text{ occurs and }  X_{k_1},X_{k_2} \text{ are the unique points in } \Delta_i^0\\
    \emptyset &, \text{ if } A_i^c \text{ occurs}. 
\end{cases}
\] Now we define \[\mathcal{J}=\bigcup_{i=1}^m  J_i\subseteq \{1,\ldots,n\} \] and introduce the ``masked" variables \[
Y_k=\begin{cases}
x_\ast &, k \in \mathcal{J}\\
X_k &, k \notin \mathcal{J}
\end{cases}
\quad k=1,\ldots,n,\] where $x_\ast \notin \mathbb{R}^d$. With this notation we can set \[\mathcal{F}=\sigma(J_1,\ldots,J_n,Y_1,\ldots,Y_n).\] Let also \[I = \{1 \leq i \leq m: J_i \neq \emptyset\}=\{1 \leq i \leq m: A_i \text{ occurs}\}.\] Conditioning on $\mathcal{F}$ means that $I$ is completely determined and so are all the points of $R_n$, except for the two unique points in $\Delta_i^0$, say $Z_1^{(i)},Z_2^{(i)}$, for which $\mathds{1}_{A_i}=1$. Let now $i \in I$. With the notation of Lemma~\ref{lem:faces-construction}, since the affine dimension of $Q_i$ does not depend on $Z_1^{(i)},Z_2^{(i)}$, if we choose $z_1^\ast,z_2^\ast \in \Delta_i^0$, Corollary~\ref{cor:relations-of-faces} yields \[f_\ell({K_n^\mu)}=C_i(\mathcal{F})+f_\ell(\conv(\widetilde{R}_n))+f_{\ell}(P_i(Z_1^{(i)},Z_2^{(i)})),\] where \[\widetilde{R_n}=(R_n \cup \{z_1^\ast,z_2^\ast\})\setminus\{Z_1^{(i)},Z_2^{(i)}\}.\] Observe that $\widetilde{R_n}$ has two fewer random points, with respect to $\mathcal{F}$, than $R_n$. Repeating the process, we get \[f_\ell(K_n^\mu)=C(\mathcal{F})+\sum_{i \in I} f_\ell(P_i(Z_1^{(i)},Z_2^{(i)})).\] Thus, as by Corollary~\ref{cor:independence-under-condition}, under $\mathcal{F}$, the pairs $(Z_1^{(i)},Z_2^{(i)})$ are independent and for every $i \in I$ \[Z_1^{(i)}\mid \mathcal{F}, \,\ Z_2^{(i)}\mid\mathcal{F}\sim\mu_{\Delta_i^0},\] applying the  law of total variance and Proposition~\ref{prop:partial-faces-lower-bound}, we obtain \begin{align*}\Var\bigl(f_\ell(K_n^\mu)\bigr)  
    \geq \mathbb{E}\bigl[\Var\bigl(V_\ell(K_n^\mu)\mid \mathcal{F}\bigr)\bigr]\gtrsim  \sum_{i=1}^{m}\mathbb{P}(A_i) \gtrsim_\mu  m \asymp r_n^{\frac{d-1}{2}}h_n^{-\frac{d-1}{2}},
\end{align*}
    where we also used Claim~\ref{claim:A-face-events-lower-bound} and
\eqref{eq:m-estimate}. 
To conclude, we need to lower-bound \[\min_{d+2 \leq n \leq\,\widetilde{n}_0} \Var\bigl(f_\ell(K_n^\mu)\bigl).\] The following claim is enough.
    \begin{claim}
        For every $n \geq d+2$, \[\Var\bigl(f_\ell(K_n^\mu)\bigl)>0.\]
    \end{claim}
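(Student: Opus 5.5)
We show that $f_\ell(K_n^\mu)$ is not almost surely constant, mirroring the intrinsic-volume case. The plan is to exhibit two events of positive probability on which $f_\ell(K_n^\mu)$ takes different values. Since ${\rm supp}(\mu)=\mathbb{R}^d$ and $\mu$ has a density, any finite product of open sets has positive probability under the law of $(X_1,\ldots,X_n)$. So it suffices to find two open configurations of $n$ points whose convex hulls have different $\ell$-face numbers. We also need this difference to be robust under small perturbations, which is where general position enters.

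\textbf{First configuration.} Fix a $d$-simplex $P=\conv\{p_0,\ldots,p_d\}$ and take $\delta>0$ small. Let $E_1$ be the event that $X_k\in B(p_{k-1},\delta)$ for $k=1,\ldots,d+1$, and $X_k\in B(c,\delta)$ for $k=d+2,\ldots,n$, where $c$ is the centroid of $P$. For $\delta$ small, the perturbed points $q_0,\ldots,q_d$ are still affinely independent, and $B(c,\delta)\subseteq{\rm int}\,\conv\{q_0,\ldots,q_d\}$. This follows because barycentric coordinates depend continuously on the vertices, exactly as in the proof of Lemma~\ref{lem:z-affine-independence}. Hence on $E_1$ the hull $K_n^\mu$ is a $d$-simplex, and $f_\ell(K_n^\mu)=\binom{d+1}{\ell+1}$ by \eqref{eq:m-simplex-faces}.

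\textbf{Second configuration.} Let $p_{d+1}$ be a point lying beyond the facet $G=\conv\{p_1,\ldots,p_d\}$ of $P$ and beneath all other facets. In barycentric coordinates with respect to $P$ this means $\lambda_0(p_{d+1})<0$ and $\lambda_j(p_{d+1})>0$ for $j\ge1$. Let $E_2$ be the event that $X_k\in B(p_{k-1},\delta)$ for $k=1,\ldots,d+2$, and the remaining points lie in $B(c,\delta)$. These are strict inequalities in continuous barycentric coordinates, so for $\delta$ small they persist for the perturbed points. On $E_2$ the hull is therefore the stacked polytope over a facet, and as computed in Claim~\ref{claim:intermediate-variance}, $f_\ell(K_n^\mu)=\binom{d+1}{\ell+1}+\tau_{d,\ell}$ with $\tau_{d,\ell}\ge1$. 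The extra interior points are harmless because $B(c,\delta)$ lies inside the perturbed simplex, which is contained in the stacked hull. This step needs $n\ge d+2$, which explains the hypothesis.

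\textbf{Conclusion and main obstacle.} Since $\mathbb{P}(E_1),\mathbb{P}(E_2)>0$ and $f_\ell$ takes distinct values on $E_1$ and $E_2$, the variance is positive. The only delicate point is the stability of the combinatorial type under perturbation, i.e.\ keeping all points strictly inside or strictly beyond/beneath the relevant facets. This is handled by choosing $\delta$ small via continuity of barycentric coordinates, as above. Combined with the bound already established for $n\ge\widetilde n_0$, positivity on the finitely many remaining $n$ gives the constant $C(\mu,d,\ell)$ in Theorem~\ref{thm:number-of faces}.
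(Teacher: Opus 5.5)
Your proposal is correct and follows essentially the same argument as the paper. In one configuration, a perturbed $d$-simplex with the remaining points in a small interior ball gives $f_\ell=\binom{d+1}{\ell+1}$; in the other, one extra point beyond a single facet and beneath the others gives the stacked value $\binom{d+1}{\ell+1}+\tau_{d,\ell}$, and both events have positive probability by full support. The only cosmetic difference is that the paper uses the concrete simplex $\conv\{0,e_1,\ldots,e_d\}$, the point $\frac{1}{\sqrt d}(1,\ldots,1)$ and an arbitrary closed ball in its interior, where you use a general simplex, its centroid and a single perturbation radius $\delta$.
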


    \begin{proof}
     Let the $d$-simplex $P_d=\conv\{0,e_1,\ldots,e_d\}$ and set \[F=\conv\{e_1,\ldots,e_d\}, \quad z=\frac{1}{\sqrt{d}}(1,\ldots,1).\]  Then $z$ lies beyond $F$ and beneath every other facet of $P_d$. Let also $B$ be a closed ball such that $B \subseteq {\rm int} (P_d)$. So we can find open neighborhoods \[U_0,U_1,\ldots,U_d,U_z\] of $0,e_1,\ldots,e_d$ and $z$ respectively such that, if $z_i \in U_k, \, k=0,\ldots,d$ and $\tilde{z} \in U_z$, then $\widetilde{P}_d=\conv\{z_0,\ldots,z_d\}$ is a $d$-simplex containing $B$ and $\tilde{z}$ lies beyond the facet $\widetilde{F}=\conv\{z_1,\ldots,z_d\}$ and beneath every other facet of $\widetilde{P}_d$. We can do so, because all the necessary properties are defined via strict inequalities, that involve continuous functions. Define first the event \[E_1=\left(\bigcap_{k=1}^{d+1}\big\{X_k \in U_{k-1}\}\right) \cap\left( \bigcap_{k=d+2}^{n} \big \{X_k \in B\}\right).\] Then $\mathbb{P}(E_1)>0$ and on $E_1$, $K_n^\mu$ is a $d$-simplex. Thus, \[f_\ell(K_n^\mu)=\binom{d+1}{\ell+1}.\] In addition, define the event \[E_2=\left(\bigcap_{k=1}^{d+1}\big\{X_k \in U_{k-1}\}\right) \cap  \big\{X_{d+2 }\in U_z\big\}\cap\left( \bigcap_{k=d+3}^{n} \big \{X_k \in B\}\right).\] Then $\mathbb{P}(E_2)>0$ and on $E_2$, $K_n^\mu$ is obtained from a $d$-simplex, by stacking a pyramid onto one of its facets. Therefore, \[f_\ell(K_n^\mu)=\binom{d+1}{\ell+1}+\tau_{d,\ell},\] where $\tau_{d,\ell} \geq 1$. This implies that $f_\ell(K_n^\mu)$ is not constant with positive probability.
    \end{proof}  
\end{proof}

\bigskip

\noindent {\bf Acknowledgements.} We would like to thank S.~Brazitikos for helpful discussions and A.~Giannopoulos for his constructive comments.

\small

\bigskip

\noindent\textbf{Keywords.}
random polytopes; intrinsic volumes; face numbers; variance lower bounds;
rotationally invariant log-concave measures.

\noindent\textbf{Mathematics Subject Classification (2020).}
Primary: 60D05; Secondary: 52A22, 52A23, 52B05.

\bigskip

\bigskip

\noindent \textsc{Minas \ Pafis}: Department of Mathematics, National and Kapodistrian University of Athens, Panepistimioupolis 157-84,
Athens, Greece.

\smallskip

\noindent \textit{E-mail:} \texttt{mipafis@math.uoa.gr}

\bigskip

\noindent \textsc{Christos \ Pandis}: Department of Mathematics and Applied Mathematics, University of Crete, Voutes Campus, 70013 Heraklion, Greece.

\smallskip

\noindent \textit{E-mail:} \texttt{chrpandis@gmail.com}

\end{document}